\newtheorem{theorem}{Theorem}[section]
\newtheorem{claim}[theorem]{Claim}
\newtheorem{proposition}[theorem]{Proposition}
\newtheorem{corollary}[theorem]{Corollary}
\newtheorem{lemma}[theorem]{Lemma}
\newtheorem{problem}[theorem]{Problem}
\theoremstyle{definition}
\newtheorem{remark}[theorem]{Remark}
\newtheorem{definition}[theorem]{Definition}
\newtheorem*{definition*}{Definition}
\newcommand{\IN}{\mathbb N}
\newcommand{\IR}{\mathbb R}
\newcommand{\IZ}{\mathbb Z}
\newcommand{\F}{\mathcal F}
\newcommand{\C}{\mathcal C}
\newcommand{\w}{\omega}
\newcommand{\pr}{\mathrm{pr}}
\newcommand{\Ra}{\Rightarrow}
\newcommand{\Haus}{\mathsf{T_{\!2}S}}
\newcommand{\Zero}{\mathsf{T_{\!z}S}}
\newcommand{\korin}[2]{\!\sqrt[#1]{#2}}
\newcommand*{\defeq}{\stackrel{\mathsmaller{\mathsf{def}}}{=}}
\newcommand{\two}{\mathbbm 2}
\newcommand{\cov}{\mathrm{cov}}
\newcommand{\M}{\mathcal M}
\newcommand{\lcm}{\mathsf{lcm}}
\title{Absolutely closed semigroups}
\author{Taras Banakh and Serhii Bardyla}
\address{T.Banakh: Ivan Franko National University of Lviv (Ukraine) and Jan Kochanowski University in Kielce (Poland)}
\email{t.o.banakh@gmail.com}
\address{S.~Bardyla: University of Vienna, Institute of Mathematics, Kurt G\"{o}del Research Center (Austria)}
\thanks{The second author was supported by the Austrian Science Fund FWF (Grant  M 2967).}
\email{sbardyla@yahoo.com}
\subjclass[2020]{22A15, 20M18, 54B30, 54D35, 54H11, 54H12}
\keywords{commutative semigroup, semilattice, group,  $\C$-closed semigroup, chain-finite semigroup, periodic semigroup}
\begin{document}
\begin{abstract}Let $\C$ be a class of topological semigroups. A semigroup $X$ is called {\em absolutely $\C$-closed} if for any homomorphism $h:X\to Y$ to a topological semigroup $Y\in\C$, the image $h[X]$ is closed in $Y$. Let $\mathsf{T_{\!1}S}$, $\mathsf{T_{\!2}S}$, and $\mathsf{T_{\!z}S}$ be  the classes of $T_1$,  Hausdorff, and Tychonoff zero-dimensional topological semigroups, respectively. We prove that a commutative semigroup $X$ is absolutely $\mathsf{T_{\!z}S}$-closed if and only if $X$ is absolutely $\mathsf{T_{\!2}S}$-closed if and only if $X$ is chain-finite, bounded, group-finite and Clifford+finite. On the other hand, a commutative semigroup $X$ is absolutely $\mathsf{T_{\!1}S}$-closed if and only if $X$ is finite. Also, for a given absolutely $\C$-closed semigroup $X$ we detect absolutely $\C$-closed subsemigroups in the center of $X$.
\end{abstract}
\maketitle

\section{Introduction and Main Results}

In many cases,  completeness properties of various objects of General Topology or  Topological Algebra can be characterized externally as closedness in ambient objects. For example, a metric space $X$ is complete if and only if $X$ is closed in any metric space containing $X$ as a subspace. A uniform space $X$ is complete if and only if $X$ is closed in any uniform space containing $X$ as a uniform subspace. A topological group $G$ is Ra\u\i kov complete  if and only if it is closed in any topological group containing $G$ as a subgroup.

On the other hand, for topological semigroups there are no reasonable notions of (inner) completeness. Nonetheless we can define many completeness properties of semigroups via their closedness in ambient topological semigroups.

A {\em topological semigroup} is a topological space $X$ endowed
with a continuous associative binary operation $X\times X\to
X$, $(x,y)\mapsto xy$.

\begin{definition*} Let $\C$ be a class of topological semigroups.
A topological
semigroup $X$ is called
\begin{itemize}
\item {\em $\C$-closed} if for any isomorphic topological
embedding $h:X\to Y$ to a topological semigroup $Y\in\C$
the image $h[X]$ is closed in $Y$;
\item {\em injectively $\C$-closed} if for any injective continuous homomorphism $h:X\to Y$ to a topological semigroup $Y\in\C$ the image $h[X]$ is closed in $Y$;
\item {\em absolutely $\C$-closed} if for any continuous homomorphism $h:X\to Y$ to a topological semigroup $Y\in\C$ the image $h[X]$ is closed in $Y$.
\end{itemize}
\end{definition*}

For any topological semigroup we have the implications:
$$\mbox{absolutely $\C$-closed $\Ra$ injectively $\C$-closed $\Ra$ $\C$-closed}.$$

\begin{definition*} A semigroup $X$ is defined to be ({\em injectively, absolutely}) {\em $\C$-closed\/} if $X$ endowed with the discrete topology has the corresponding closedness property.
\end{definition*}

We will be interested in the (absolute, injective) $\C$-closedness for the classes:
\begin{itemize}
\item $\mathsf{T_{\!1}S}$ of topological semigroups satisfying the separation axiom $T_1$;
\item $\Haus$ of Hausdorff topological semigroups;
\item $\Zero$ of Tychonoff zero-dimensional topological
semigroups.
\end{itemize}
A topological space satisfies the separation axiom $T_1$ if all its finite subsets are closed.
A topological space is {\em zero-dimensional} if it has a base of
the topology consisting of {\em clopen} (=~closed-and-open) sets. It is well-known (and easy to see) that every zero-dimensional $T_1$ topological space is Tychonoff.

Since $\mathsf{T_{\!z}S}\subseteq\mathsf{T_{\!2}S}\subseteq\mathsf{T_{\!1}S}$, for every semigroup we have the implications:
$$
\xymatrix{
\mbox{absolutely $\mathsf{T_{\!1}S}$-closed}\ar@{=>}[r]\ar@{=>}[d]&\mbox{absolutely $\mathsf{T_{\!2}S}$-closed}\ar@{=>}[r]\ar@{=>}[d]&\mbox{absolutely $\mathsf{T_{\!z}S}$-closed}\ar@{=>}[d]\\
\mbox{injectively $\mathsf{T_{\!1}S}$-closed}\ar@{=>}[r]\ar@{=>}[d]&\mbox{injectively $\mathsf{T_{\!2}S}$-closed}\ar@{=>}[r]\ar@{=>}[d]&\mbox{injectively $\mathsf{T_{\!z}S}$-closed}\ar@{=>}[d]\\
\mbox{$\mathsf{T_{\!1}S}$-closed}\ar@{=>}[r]&\mbox{$\mathsf{T_{\!2}S}$-closed}\ar@{=>}[r]&\mbox{$\mathsf{T_{\!z}S}$-closed.}
}
$$

$\C$-Closed topological groups for various classes $\C$ were investigated by many authors~\cite{AC1,BL,Ban,BGR,DU,G,JS, L}. In particular, the closedness of commutative topological groups in the class of Hausdorff topological semigroups was investigated in~\cite{Z1,Z2}; $\mathcal{C}$-closed topological semilattices were investigated in~\cite{BBm, BBc, GutikPagonRepovs2010, GutikRepovs2008, Stepp75}. Some notions of completeness in Category Theory were investigated in~\cite{CDT,Cbook1,Er,FG,G1,GH,LW}. In particular, closure operators in different categories were studied in~\cite{BGH, Cbook, CG1, CG2, DT, Dbook, GS, T, Za}. 
This paper is a continuation of the papers  \cite{BB}, \cite{BB2}, \cite{GCCS},  \cite{BV} providing inner characterizations of various closedness properties of (discrete topological) semigroups. In order to formulate such inner characterizations, let us recall some properties of semigroups.

A semigroup $X$ is called
\begin{itemize}
\item {\em chain-finite} if any infinite set $I\subseteq X$ contains elements $x,y\in I$ such that $xy\notin\{x,y\}$;
\item {\em singular} if there exists an infinite set $A\subseteq X$ such that $AA$ is a singleton;
\item {\em periodic} if for every $x\in X$ there exists $n\in\IN$ such that $x^n$ is an idempotent;
\item {\em bounded} if there exists $n\in\IN$ such that for every $x\in X$ the $n$-th power $x^n$ is an idempotent;
\item {\em group-finite} if every subgroup of $X$ is finite;
\item {\em group-bounded} if every subgroup of $X$ is bounded;
\item {\em group-commutative} if every subgroup of $X$ is commutative.
\end{itemize}

The following theorem (proved in \cite{BB}) characterizes $\C$-closed commutative semigroups.

\begin{theorem}[Banakh--Bardyla]\label{t:C-closed} Let $\C$ be a class of topological semigroups such that $\mathsf{T_{\!z}S}\subseteq\C\subseteq \mathsf{T_{\!1}S}$. A commutative semigroup $X$ is $\C$-closed if and only if $X$ is chain-finite, nonsingular,  periodic, and group-bounded.
\end{theorem}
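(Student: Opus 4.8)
The plan is to prove the two implications separately after a reduction that exploits the inclusions $\Zero\subseteq\C\subseteq\mathsf{T_{\!1}S}$. Since testing against a larger class of semigroups yields a stronger closedness property, for every admissible $\C$ we have the chain $\mathsf{T_{\!1}S}\text{-closed}\Ra\C\text{-closed}\Ra\Zero\text{-closed}$. Hence, to prove sufficiency of the four conditions it suffices to establish the strongest conclusion, that $X$ is $\mathsf{T_{\!1}S}$-closed, and to prove their necessity it suffices to derive them from the weakest hypothesis, $\Zero$-closedness. So the whole theorem follows from: (a) if $X$ is chain-finite, nonsingular, periodic and group-bounded, then $X$ is closed in every $T_1$ topological semigroup containing it as a discrete subsemigroup; and (b) if $X$ fails any one of these properties, then $X$ embeds as a non-closed discrete subsemigroup into some zero-dimensional Tychonoff topological semigroup.

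For the necessity (b) I would treat the four failures by four constructions, each adjoining to $X$ a single limit point $z\notin X$ and extending the multiplication so that $z$ absorbs the relevant products. If $X$ is not chain-finite, then taking $x=y$ in the defining condition forces idempotency, so $X$ contains an infinite chain in $E(X)$, and one adjoins its missing infimum (or supremum) as a new idempotent limit. If $X$ is singular, an infinite set $A$ with $AA=\{c\}$ accumulates to an absorbing $z$ with $zX=\{c\}$. If $X$ is not periodic, an aperiodic $x_0$ generates a copy of $(\IN,+)$, the model case being $(\IN,+)\hookrightarrow(\IN\cup\{\infty\},+)$ with $\infty$ absorbing. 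The delicate case is group-unboundedness: a discrete subgroup is automatically closed inside a topological group, so the ambient must genuinely be only a semigroup; here I would reduce an unbounded subgroup to a canonical unbounded piece (an infinite cyclic group, a Pr\"ufer group, or $\bigoplus_k\IZ_{n_k}$ with $n_k\to\infty$) and send a carefully chosen sequence $g_k$ to an absorbing $z$. In every case the neighborhood filter at $z$ must be chosen free and stable under the relevant translations and products (so that joint continuity holds and no product that is eventually constant collides with $z$, which would violate Hausdorffness); verifying the existence of such a filter from the given bad configuration is the technical core of this half, and I expect the aperiodic and group-unbounded cases to be the most demanding.

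For the sufficiency (a) I would argue by contradiction using the structure of commutative periodic semigroups. Let $Y\in\mathsf{T_{\!1}S}$ contain $X$ as a discrete subsemigroup and suppose $z\in\overline{X}\setminus X$, writing $z=\lim_{\U}x$ along an ultrafilter $\U$ on $X$. Periodicity assigns to each $x$ the idempotent $e(x)$ of $\langle x\rangle$, and $x\mapsto e(x)$ grades $X$ over its idempotent semilattice $E(X)$ with $X_eX_f\subseteq X_{ef}$. First, chain-finiteness makes $E(X)$ closed in every $T_1$ extension (the semilattice case, available from the literature on $\C$-closed semilattices cited above, where the other three conditions hold automatically), so $\epsilon\defeq\lim_{\U}e(x)\in E(X)$ and $z$ acquires a well-defined level $\epsilon$ with $z\epsilon=z$. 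Second, group-boundedness makes each maximal subgroup $H_e$ of finite exponent, hence closed in any $T_1$ extension: the units at $\epsilon$ form a topological group in which $H_\epsilon$ is a discrete subgroup, hence closed, and bounded abelian groups are algebraically compact. Third, nonsingularity excludes accumulation arising from the non-group (nil) part of the archimedean components, since any such accumulation would manifest as an infinite set whose product set is a single point. Combining these, one locates $z$ inside $X_\epsilon$ and then inside $X$, the desired contradiction.

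The main obstacle I anticipate is precisely this last assembly in (a). Although the three pieces (the idempotent semilattice, the bounded maximal subgroups, and the nil parts) are individually controlled, an accumulation point may a priori mix infinitely many archimedean components at once, because chain-finiteness forbids infinite chains in $E(X)$ but still permits infinite antichains and hence infinitely many components. The crux is therefore to show that the ultrafilter limit is captured by the single idempotent level $\epsilon$ and that, relative to $\epsilon$, the four conditions jointly force $z\in X$; organizing the double-limit computations of $z\epsilon$, $z^n$ and $z\cdot z^{n-1}$ so that the algebraic identities valid on $X$ pass continuously to $z$ is where the real work will lie.
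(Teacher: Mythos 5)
First, a point of reference: the statement in question is Theorem~\ref{t:C-closed}, which this paper imports from \cite{BB} and does not prove; there is no in-paper proof to compare against, so your attempt has to be measured against the argument in \cite{BB}. Your global architecture is the correct one and is the one used there: since $\Zero\subseteq\C\subseteq\mathsf{T_{\!1}S}$, it suffices to prove sufficiency against the class $\mathsf{T_{\!1}S}$ and necessity from $\Zero$-closedness alone, and the necessity half does decompose into four separate constructions, one per property (your observation that a failure of chain-finiteness forces an infinite chain of idempotents, by first putting $x=y$, is correct and is exactly how the reduction goes).

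That said, the proposal is a plan rather than a proof, and it stops precisely where the difficulties begin. In the necessity half, adjoining a single point $z$ to $X$ requires defining $zx$ and $xz$ for \emph{every} $x\in X$, not just for $x$ in the witnessing set, and verifying joint continuity at $(z,x)$, $(x,z)$ and $(z,z)$; e.g.\ in the singular case your prescription $zX=\{c\}$ is not continuous unless $\{a\in A: ax=c\}$ lies in the filter at $z$ for each $x$, which requires Ramsey-type shrinking of $A$ and a careful choice of filter. These verifications, which you defer, are the bulk of the work. In the sufficiency half there are two genuine gaps. First, you set $\epsilon:=\lim_{\U}e(x)$ and use it to assign a level to $z$, but the existence of this limit in a non-compact $T_1$ semigroup is not guaranteed and does not follow from $E(X)$ being closed: a closed \emph{discrete} set has no accumulation points, so if $e(x)$ is non-constant along $\U$ the limit typically fails to exist, and converting that failure into a contradiction with the existence of $\lim_{\U}x=z$ (i.e., trapping the accumulation in finitely many archimedean components despite the possible infinite antichains you correctly worry about) is exactly the missing argument. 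Second, the subgroup step silently reduces to the assertion that every bounded commutative group is $\mathsf{T_{\!1}S}$-closed; the remark that a discrete subgroup of a topological group is closed does not apply, because the closure of $H_\epsilon$ in $Y$ is only a semigroup (inversion need not extend continuously), and ``bounded abelian groups are algebraically compact'' is an algebraic fact that does not by itself produce topological closedness. That assertion is a substantial lemma in its own right (in the line of Zelenyuk's results \cite{Z1,Z2} and a core ingredient of \cite{BB}) and must be proved, not invoked. So the skeleton matches the known proof, but neither implication is yet established.
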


A subset $I$ of a semigroup $X$ is called an {\em ideal} in $X$ if $IX\cup XI\subseteq  I$. Every ideal $I\subseteq X$ determines the congruence $(I\times I)\cup \{(x,y)\in X\times X:x=y\}$ on $X$. The quotient semigroup of $X$ by this congruence is denoted by $X/I$ and called the {\em quotient semigroup} of $X$ by the ideal $I$. If $I=\emptyset$, then the quotient semigroup $X/\emptyset$ can be identified with the semigroup $X$.

Theorem~\ref{t:C-closed} implies that each subsemigroup of a $\C$-closed commutative semigroup is $\C$-closed. On the other hand, quotient semigroups of $\C$-closed commutative semigroups are not necessarily $\C$-closed, see Example 1.8 in \cite{BB}. This motivates the following notions.

\begin{definition*}A semigroup $X$ is called
\begin{itemize}
\item {\em projectively $\C$-closed} if for any congruence $\approx$ on $X$ the quotient semigroup $X/_{\approx}$ is $\C$-closed;
\item {\em ideally $\C$-closed} if for any ideal $I\subseteq X$ the quotient semigroup $X/I$ is $\C$-closed.
\end{itemize}
\end{definition*}

It is easy to see that for every semigroup the following implications hold:
$$\mbox{absolutely $\C$-closed $\Ra$ projectively $\C$-closed $\Ra$ ideally $\C$-closed $\Ra$ $\C$-closed.}$$
Observe that a semigroup $X$ is absolutely $\C$-closed if and only if for any congruence $\approx$ on $X$ the semigroup $X/_\approx$ is injectively $\C$-closed.

For a semigroup $X$, let $$E(X)\defeq\{x\in X:xx=x\}$$ be the set of idempotents of $X$.

For an idempotent $e$ of a semigroup $X$, let $H_e$ be the maximal subgroup of $X$ that contains $e$. The union $H(X)=\bigcup_{e\in E(X)}H_e$ of all subgroups of $X$ is called the {\em Clifford part} of $S$.
A semigroup $X$ is called
\begin{itemize}
\item {\em Clifford}  if $X=H(X)$;
\item {\em Clifford+finite} if $X\setminus H(X)$ is finite.
\end{itemize}

Ideally and projectively $\C$-closed commutative semigroups were characterized in  \cite{BB} as follows.

\begin{theorem}[Banakh--Bardyla]\label{t:mainP} Let $\C$ be a class of topological semigroups such that $\mathsf{T_{\!z}S}\subseteq\C\subseteq \mathsf{T_{\!1}S}$. For a commutative semigroup $X$ the following conditions are equivalent:
\begin{enumerate}
\item $X$ is projectively $\C$-closed;
\item $X$ is ideally $\C$-closed;
\item the semigroup $X$ is chain-finite, group-bounded and Clifford+finite.
\end{enumerate}
\end{theorem}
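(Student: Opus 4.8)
The plan is to prove the cycle $(1)\Ra(2)\Ra(3)\Ra(1)$. The implication $(1)\Ra(2)$ is immediate: every ideal $I\subseteq X$ induces the Rees congruence whose quotient is $X/I$, so projective $\C$-closedness specializes to ideal $\C$-closedness.

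For $(2)\Ra(3)$ I would first take the trivial ideal $I=\emptyset$: then $X=X/\emptyset$ is $\C$-closed, so by Theorem~\ref{t:C-closed} $X$ is chain-finite, nonsingular, periodic and group-bounded; in particular chain-finiteness and group-boundedness come for free, and I may use periodicity below. The remaining task, Clifford+finiteness, is the heart of this direction. The guiding observation (borrowed from the converse) is that a Rees quotient $X/I$ of a periodic, chain-finite, group-bounded semigroup is again periodic, chain-finite and group-bounded (a subgroup of $X/I$ avoids the adjoined zero, hence is closed under the operation of $X$ and lifts to a subgroup of $X$); consequently such a quotient can fail to be $\C$-closed \emph{only} by becoming singular. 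So I would argue by contraposition: assuming $X\setminus H(X)$ infinite, I would build an ideal $I$ for which $X/I$ is singular, i.e. an infinite $A\subseteq X\setminus I$ with $AA\subseteq I$. I would exploit that in a periodic commutative semigroup $x\mapsto e(x)$ (the idempotent power of $x$) is a homomorphism onto the semilattice $E(X)$ with $e(xy)=e(x)e(y)$, so that for every idempotent $f$ the sets $X_{\le f}\defeq\{x:e(x)\le f\}$ and $X_{<f}\defeq\{x:e(x)<f\}$ are ideals of $X$, and $X$ is the semilattice $E(X)$ of its archimedean components $X_f=\{x:e(x)=f\}$, each a nil extension of its kernel group $H_f$, with $X\setminus H(X)=\bigsqcup_f(X_f\setminus H_f)$. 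If infinitely many components have nonempty nil part, I would pick one $x_\alpha\in X_{e_\alpha}\setminus H_{e_\alpha}$ per component, thin the idempotents $e_\alpha$ to an antichain by chain-finiteness, and set $A=\{x_\alpha\}$ with $I=\bigcup_{\alpha\ne\beta}X_{\le e_\alpha e_\beta}\cup\bigcup_\alpha x_\alpha^2X^1$; the antichain condition together with the non-regularity of each $x_\alpha$ (which gives $x_\alpha\notin x_\alpha^2X^1$) yields $A\cap I=\emptyset$ and $AA\subseteq I$. If instead a single component $X_e$ has infinite nil part, then $S=X_e/H_e$ is an infinite commutative nil semigroup, and I would produce an infinite $A\subseteq X_e\setminus H_e$ whose products lie in a proper ideal $J\trianglelefteq X_e$ disjoint from $A$ — taking $J=X_e^2$ when $S$ has infinitely many indecomposables and $J=H_e$ when $S$ is singular — so that $I=X_{<e}\cup J$ is an ideal of $X$ witnessing singularity of $X/I$.

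The main obstacle is precisely this last (nil) case: one must show that every infinite commutative nil semigroup either has infinitely many indecomposable elements or is singular. This is a purely combinatorial statement about the descending chain $S\supseteq S^2\supseteq\cdots$ (note $S=S^2$ can occur), it does not follow from the hypotheses on $X$, and it has to be proved in its own right.

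For $(3)\Ra(1)$ I would fix a congruence $\approx$ with quotient map $q\colon X\to Q$, $Q\defeq X/_\approx$, and show $Q$ satisfies all four conditions of Theorem~\ref{t:C-closed}. Periodicity passes to any homomorphic image. Since homomorphisms carry subgroups into subgroups, $q[H(X)]\subseteq H(Q)$, hence $Q\setminus H(Q)\subseteq q[X\setminus H(X)]$ is finite, i.e. $Q$ is Clifford+finite; and a Clifford+finite commutative semigroup is automatically nonsingular, since an infinite $A$ with $AA$ a singleton would meet $H(Q)$ in an infinite set lying, by comparing the squares $a^2$, in a single maximal subgroup, where cancellation forbids $a^2=aa'$ for $a\ne a'$. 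The delicate point is chain-finiteness of $Q$: if $Q$ carried an infinite chain of idempotents $\bar c_1,\bar c_2,\dots$, I would lift them (by periodicity) to distinct idempotents $c_n\in E(X)$ and, reducing by Ramsey to a monotone chain, form the partial products $c_1c_2\cdots c_n$ in the descending case, or their stabilized tails $s_m=c_mc_{m+1}\cdots$ in the ascending case (chain-finiteness of $X$ forces the descending sequence $c_m\cdots c_n$ to stabilize, giving $s_m=c_ms_{m+1}\le s_{m+1}$ with $q(s_m)=\bar c_m$); either way one obtains a strictly monotone infinite chain of idempotents in $X$, contradicting chain-finiteness of $X$. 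Group-boundedness of $Q$ is preserved similarly: collapsing several idempotents into one identity forces, through cross-products whose idempotent parts strictly descend and hence stabilize by chain-finiteness, a collapse that bounds the resulting subgroup by the bounds of the finitely many relevant subgroups of $X$. Having verified all four properties, Theorem~\ref{t:C-closed} gives that $Q$ is $\C$-closed, so $X$ is projectively $\C$-closed.
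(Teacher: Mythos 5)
First, a point of reference: Theorem~\ref{t:mainP} is imported into this paper from \cite{BB} and is stated here without proof, so there is no in-paper argument to compare yours against; I can only judge the proposal on its own terms. With that said, your architecture is sensible and much of it checks out: $(1)\Ra(2)$ is indeed immediate; reducing $(2)\Ra(3)$ to Clifford+finiteness via Theorem~\ref{t:C-closed} applied to $X=X/\emptyset$ is right; your observation that a Rees quotient of a chain-finite, periodic, group-bounded commutative semigroup retains these three properties, so that it can fail to be $\C$-closed only by being singular, is correct and is the right way to set up the contraposition; and the case of infinitely many archimedean components with nonempty non-group part works (the antichain thinning and the verification that $x\notin x^2X^1$ for $x\in X_e\setminus H_e$ both go through using periodicity). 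The direction $(3)\Ra(1)$ is also essentially sound, though group-boundedness of an arbitrary quotient is only gestured at; it can be completed by observing that the set $E_u$ of idempotents of $X$ mapping onto a fixed idempotent $u$ of the quotient is a downward directed subsemilattice of the chain-finite semilattice $E(X)$, hence has a least element $e_*$, so that every element of the subgroup under consideration possessing a preimage in $H(X)$ already has a preimage in $H_{e_*}$, and the subgroup is covered by the bounded group $q[H_{e_*}]$ together with the finite set $q[X\setminus H(X)]$.

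The genuine gap is the one you flag yourself, and it is fatal to the proposal as a proof: the case of a single archimedean component $X_e$ with $X_e\setminus H_e$ infinite. The dichotomy you invoke --- every infinite commutative nil semigroup $S$ either has infinitely many indecomposables or is singular --- is neither proved nor shown to be exhaustive, and it is not even the statement you need: what is required is that some Rees quotient of $X$ be singular, and ``$S$ itself is singular'' and ``$S\setminus S^2$ is infinite'' are merely two sufficient conditions for that. One can push your reduction one step further to see where the real difficulty lies: if $S\setminus S^2$ is finite, then every element of $S\setminus S^{\w}$ (where $S^{\w}=\bigcap_{n\in\IN}S^n$) is a product of indecomposables, and a finitely generated commutative nil semigroup is finite; hence $S^{\w}$ must be infinite, so the hard case is an infinite commutative nil semigroup all of whose nonzero elements factor into arbitrarily many terms. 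There, neither of your alternatives is evidently forced: for a candidate infinite set $A$ the products of distinct elements may all vanish while the squares $a^2$ take several nonzero values, so $AA$ need not be a singleton, and one has to manufacture an ideal $I$ and an infinite $A$ disjoint from $I$ with $AA\subseteq I$ simultaneously --- a genuine combinatorial argument that you have not supplied. Since everything in $(2)\Ra(3)$ beyond Theorem~\ref{t:C-closed} is concentrated in exactly this step, the proposal does not yet constitute a proof.
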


In \cite{BB2} it is shown that the injective (and absolute) $\mathsf{T_{\!1}S}$-closedness is tightly related to the (projective) $\mathsf{T_{\!1}S}$-discreteness.

\begin{definition*}
Let $\C$ be a class of topological semigroups.
A semigroup $X$ is called
\begin{itemize}
\item {\em $\C$-discrete} (or else {\em $\C$-nontopologizable}) if for any injective homomorphism $h:X\to Y$ to a topological semigroup $Y\in\C$ the image $h[X]$ is a discrete subspace of $Y$;
\item {\em $\C$-topologizable} if $X$ is not $\C$-discrete;
\item {\em projectively $\C$-discrete} if for every homomorphism $h:X\to Y$ to a topological semigroup $Y\in\C$ the image $h[X]$ is a discrete subspace of $Y$.
\end{itemize}
\end{definition*}

The study of topologizable and nontopologizable semigroups is a classical topic in Topological Algebra that traces its history back to Markov's problem \cite{Markov} of topologizability of infinite groups, which was resolved in \cite{Shelah}, \cite{Hesse} and \cite{Ol} by constructing examples of nontopologizable infinite groups. For some other results on topologizability of semigroups, see \cite{BanS,BM,DS1,DS2,DT0,vD,DI,GLS,KOO,Kotov,Taimanov}.

In Propositions 3.2 and 3.3 of \cite{BB2} the following two characterizations are proved.

\begin{theorem}[Banakh--Bardyla]\label{t:discrete} A semigroup $X$ is
\begin{enumerate}
\item injectively $\mathsf{T_{\!1}S}$-closed if and only if $X$ is $\mathsf{T_{\!1}S}$-closed and $\mathsf{T_{\!1}S}$-discrete;
\item absolutely $\mathsf{T_{\!1}S}$-closed if and only if $X$ is projectively $\mathsf{T_{\!1}S}$-closed and projectively $\mathsf{T_{\!1}S}$-discrete.
\end{enumerate}
\end{theorem}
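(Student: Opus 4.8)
The plan is to treat part~(1) as the core and to deduce part~(2) from it by a purely formal factorization argument. For part~(1), the implication ``$\mathsf{T_{\!1}S}$-closed and $\mathsf{T_{\!1}S}$-discrete $\Ra$ injectively $\mathsf{T_{\!1}S}$-closed'' is immediate: given an injective homomorphism $h:X\to Y$ with $Y\in\mathsf{T_{\!1}S}$, $\mathsf{T_{\!1}S}$-discreteness makes $h[X]$ a discrete subspace, so $h$ is an isomorphic topological embedding (both $X$ and $h[X]$ carry the discrete topology), and then $\mathsf{T_{\!1}S}$-closedness yields that $h[X]$ is closed. Conversely, injective $\mathsf{T_{\!1}S}$-closedness trivially implies $\mathsf{T_{\!1}S}$-closedness, since every isomorphic topological embedding is in particular an injective continuous homomorphism. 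Thus the whole weight of part~(1) rests on the remaining implication, injectively $\mathsf{T_{\!1}S}$-closed $\Ra$ $\mathsf{T_{\!1}S}$-discrete, which I would prove by contraposition.

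So I would suppose $X$ is not $\mathsf{T_{\!1}S}$-discrete, witnessed by an injective homomorphism $h:X\to Y$ into some $Y\in\mathsf{T_{\!1}S}$ whose image is not discrete. Replacing $Y$ by the subsemigroup $h[X]$ with the subspace topology (still a $T_1$ topological semigroup), I may assume $h$ is a continuous bijection onto $Y$ and fix a non-isolated point $p\in Y$. The goal is to enlarge $Y$ to a $T_1$ topological semigroup $Z$ in which $Y$ is not closed; then $h$, viewed as a map into $Z$, is an injective continuous homomorphism with non-closed image, contradicting injective $\mathsf{T_{\!1}S}$-closedness. I would build $Z=Y\sqcup\{z^*\}$ by adjoining a single ``twin'' $z^*$ of $p$, declaring the open sets of $Z$ to be the open sets of $Y$ together with the sets $\{z^*\}\cup(U\setminus\{p\})$ for $U$ an open neighbourhood of $p$ in $Y$.

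There are three things to verify. (a) $Z$ is $T_1$: one checks that $Y$ is open in $Z$ (so $\{z^*\}$ is closed) and uses the $T_1$-ness of $Y$ to see that each singleton $\{y\}$ is closed in $Z$. (b) $z^*\in\overline{Y}\setminus Y$: since $p$ is non-isolated, some net in $Y\setminus\{p\}$ converges to $p$, and by construction that same net converges to $z^*$, so $Y$ is not closed in $Z$. (c) $Z$ carries a continuous associative multiplication extending that of $Y$ under which $z^*$ is never a product. Step~(c) is the place I expect the main difficulty, and the cleanest way around it is to introduce the retraction $r:Z\to Y$ with $r|_Y=\mathrm{id}_Y$ and $r(z^*)=p$, to check that $r$ is continuous (neighbourhoods of $p$ pull back to neighbourhoods of $z^*$, by the very definition of the topology), and then to \emph{define} the operation on $Z$ by $z_1\cdot z_2\defeq r(z_1)\cdot r(z_2)$. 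Associativity is then inherited from $Y$ because $r$ is a retraction, and continuity is automatic since the operation factors as $\mu\circ(r\times r)$ with $\mu$ the continuous multiplication of $Y$; this avoids any ad hoc case analysis at $z^*$ and makes $r$ a continuous homomorphic retraction.

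Finally, part~(2) follows formally. By the observation recorded just before the statement, $X$ is absolutely $\mathsf{T_{\!1}S}$-closed if and only if every quotient $X/_{\approx}$ is injectively $\mathsf{T_{\!1}S}$-closed, and by part~(1) this holds if and only if every $X/_{\approx}$ is simultaneously $\mathsf{T_{\!1}S}$-closed and $\mathsf{T_{\!1}S}$-discrete. Splitting the two conditions, ``every quotient is $\mathsf{T_{\!1}S}$-closed'' is exactly projective $\mathsf{T_{\!1}S}$-closedness, while ``every quotient is $\mathsf{T_{\!1}S}$-discrete'' is equivalent to projective $\mathsf{T_{\!1}S}$-discreteness, since an arbitrary homomorphism $X\to Y$ factors through the injective map $X/_{\approx}\to Y$ on its kernel congruence, so all homomorphic images are discrete precisely when every quotient is $\mathsf{T_{\!1}S}$-discrete. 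Combining the two gives the stated equivalence.
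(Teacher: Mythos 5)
Your proof is correct. Note that the paper does not prove Theorem~\ref{t:discrete} itself but quotes it from Propositions 3.2 and 3.3 of \cite{BB2}, and your argument --- restrict to the image, adjoin a twin $z^*$ of a non-isolated point $p$, transport the multiplication along the continuous retraction $r(z^*)=p$ to get a $T_1$ (necessarily non-Hausdorff) topological semigroup in which the image is not closed, and then deduce part (2) by factoring arbitrary homomorphisms through quotients by their kernel congruences --- is essentially the standard proof given in that reference.
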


The following two theorems characterizing absolutely $\C$-closed commutative semigroups are the main results of this paper. In contrast to  Theorems~\ref{t:C-closed} and~\ref{t:mainP}, the  characterizations of absolutely $\C$-closed semigroups essentially depend on the class $\C$, where we distinguish two cases: $\C=\mathsf{T_{\!1}S}$ and $\mathsf{T_{\!z}S}\subseteq \C\subseteq \mathsf{T_{\!2}S}$.

\begin{theorem}\label{t:main1} For a commutative semigroup $X$ the following conditions are equivalent:
\begin{enumerate}
\item $X$ is absolutely $\mathsf{T_{\!1}S}$-closed;
\item $X$ is projectively $\mathsf{T_{\!1}S}$-closed and projectively $\mathsf{T_{\!1}S}$-discrete;
\item $X$ is projectively $\mathsf{T_{\!z}S}$-closed and projectively $\mathsf{T_{\!z}S}$-discrete;
\item $X$ is finite.
\end{enumerate}
\end{theorem}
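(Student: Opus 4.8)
The plan is to establish the cycle $(4)\Ra(1)\Leftrightarrow(2)\Ra(3)\Ra(4)$, so that all four conditions become equivalent. The equivalence $(1)\Leftrightarrow(2)$ is exactly Theorem~\ref{t:discrete}(2) and needs nothing new. The implication $(4)\Ra(1)$ is immediate: if $X$ is finite, every homomorphic image $h[X]$ is finite, hence closed in any $T_1$ semigroup, so $X$ is absolutely $\mathsf{T_{\!1}S}$-closed. For $(2)\Ra(3)$ I would only invoke the inclusion $\Zero\subseteq\mathsf{T_{\!1}S}$: a homomorphism into a $\Zero$-semigroup is in particular one into a $\mathsf{T_{\!1}S}$-semigroup, so projective $\mathsf{T_{\!1}S}$-closedness and projective $\mathsf{T_{\!1}S}$-discreteness of all quotients of $X$ pass automatically to the weaker class $\Zero$.

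The heart of the argument is $(3)\Ra(4)$, which I would prove by contraposition: assuming $X$ is infinite, I will exhibit a quotient $X/_{\approx}$ carrying a non-discrete zero-dimensional Hausdorff semigroup topology, contradicting projective $\Zero$-discreteness. First I record the structural content of (3): by Theorem~\ref{t:mainP}, projective $\Zero$-closedness makes $X$ chain-finite, group-bounded and Clifford+finite, and by Theorem~\ref{t:C-closed} it is periodic. Since $X\setminus H(X)$ is finite while $X$ is infinite, the Clifford part $H(X)=\bigcup_{e\in E(X)}H_e$ is infinite, forcing one of two cases: either some maximal subgroup $H_e$ is infinite, or there are infinitely many idempotents.

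In the first case I would use the retraction $\lambda_e\colon X\to X$, $\lambda_e(x)=ex$, which by commutativity and $e=e^2$ is an endomorphism with image the commutative monoid $eX$ whose identity is $e$ and whose group of units is exactly $H_e$. The non-units form an ideal of $eX$, so composing $\lambda_e$ with the corresponding Rees projection gives a surjective homomorphism $X\to H_e^{0}$ onto $H_e$ with a (possibly absent) adjoined zero. Now $H_e$ is an infinite abelian group that is bounded (as $X$ is group-bounded), hence by Pr\"ufer's theorem a direct sum $\bigoplus_{j}\IZ/n_j$ of finite cyclic groups; embedding it densely into the profinite group $\prod_{j}\IZ/n_j$ equips $H_e$ with a non-discrete zero-dimensional compact group topology, and declaring the adjoined zero isolated turns $H_e^{0}$ into a non-discrete zero-dimensional Hausdorff topological semigroup. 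This contradicts (3).

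In the second case I would pass to the semilattice $E(X)$. Since $X$ is commutative and periodic, the map $\pi\colon X\to E(X)$, $\pi(x)=x^{\w}$ sending $x$ to the unique idempotent among its powers, is a surjective homomorphism onto the maximal semilattice image $E(X)$; chain-finiteness of $X$ implies that $E(X)$, ordered by $e\le f\Leftrightarrow ef=e$, has no infinite chain, while the case hypothesis makes it infinite, so by Ramsey's theorem it contains an infinite antichain. The remaining---and genuinely hardest---task is to topologize $E(X)$ non-discretely within $\Zero$. When the antichain can be taken among the maximal idempotents (in particular when there are infinitely many of them), its complement is an ideal and the Rees quotient of $E(X)$ is an infinite null semilattice $\{0\}\cup\{a_i\}$, topologized as a convergent sequence $a_i\to 0$ with the cofinite neighborhoods of $0$ (for which one checks directly that the meet is continuous); this already contradicts (3). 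The general configuration---few maximal idempotents but infinitely many idempotents below them---does \emph{not} reduce to a null-semilattice quotient (already the ``fan with top'' $\{0,1\}\cup\{a_i\}$ shows this), so here one must build the topology on $E(X)$ itself: using chain-finiteness to extract a common lower bound $c$ of the antichain and letting $a_i\to c$, the delicate step being continuity of the meet against \emph{arbitrary} elements of $E(X)$. I expect this verification, which presumably leans on the authors' earlier study of topologizable semilattices, to absorb most of the effort and to be the main obstacle of the proof.
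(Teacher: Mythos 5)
Your reductions $(4)\Ra(1)$, $(1)\Leftrightarrow(2)$ and $(2)\Ra(3)$ are fine, and your dichotomy for $(3)\Ra(4)$ (an infinite Clifford+finite commutative semigroup has either an infinite maximal subgroup or infinitely many idempotents) is the right decomposition. Your Case 1 is correct as written, although the paper gets it for free: by Theorem~\ref{t:Z}(2), $\mathsf{T_{\!z}S}$-discreteness already forces group-finiteness, so no infinite subgroup can exist and the Pr\"ufer/profinite construction is not needed. The genuine gap is exactly where you place it: in Case 2 you must produce a non-discrete Hausdorff zero-dimensional semilattice topology on (a quotient of) the infinite semilattice $E(X)$, and your explicit convergent-sequence construction only covers the special configuration in which the antichain consists of maximal idempotents; for the general configuration you concede that continuity of the meet against arbitrary elements is unresolved. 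As it stands, the implication $(3)\Ra(4)$ is not proved.

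The paper closes this case without topologizing anything explicitly (Lemma~\ref{l:pd=>fin}): let $H$ be the set of all homomorphisms from the semilattice $E\defeq E(X)$ (in the paper, from the semilattice reflection $X/_{\Updownarrow}$) to the two-element semilattice $\two$, and let $\delta:E\to\two^H$ be the diagonal map, which is injective because characters separate points of a semilattice. Projective $\mathsf{T_{\!z}S}$-discreteness of $X$, applied to the composite $X\to E\to\two^H$, makes $\delta[E]$ a discrete subspace of the compact zero-dimensional topological semilattice $\two^H$; hence $\delta$ is a topological embedding of the discrete semilattice $E$, and the $\mathsf{T_{\!z}S}$-closedness of $E$ (which holds by Theorem~\ref{t:C-closed}, $E$ being chain-finite) makes $\delta[E]$ closed. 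A closed discrete subset of a compact space is finite. Note that this same observation rescues your contrapositive: if $E$ were infinite, then $\delta[E]$, being closed and infinite in the compact space $\two^H$, could not be discrete, so the subspace topology inherited from $\two^H$ is precisely the non-discrete zero-dimensional Hausdorff semilattice topology you were trying to build by hand. I recommend replacing your Case 2 by this compactness argument. (The paper actually proves a stronger, non-commutative statement, Theorem~\ref{t:mainT1}, routed through $V\!E(X)$, Theorem~\ref{t:Z} and Theorem~\ref{t:ideal}, but for commutative $X$ its content is the same three finiteness facts you are after: finitely many idempotents, finite subgroups, finite non-Clifford part.)
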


\begin{theorem}\label{t:main2} Let $\C$ be a class of topological semigroups such that $\mathsf{T_{\!z}S}\subseteq\C\subseteq\mathsf{T_{\!2}S}$. For a commutative semigroup $X$ the following conditions are equivalent:
\begin{enumerate}
\item $X$ is absolutely $\C$-closed;
\item $X$ is ideally $\C$-closed, injectively $\C$-closed and bounded;
\item $X$ is ideally $\C$-closed, group-finite and bounded;
\item $X$ is chain-finite, bounded, group-finite and Clifford+finite.
\end{enumerate}
\end{theorem}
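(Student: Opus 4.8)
The plan is to run the cycle $(1)\Rightarrow(2)\Rightarrow(3)\Rightarrow(4)\Rightarrow(1)$, using Theorems~\ref{t:C-closed} and~\ref{t:mainP}, the implication chains ``absolutely $\Ra$ projectively $\Ra$ ideally $\Ra$ $\C$-closed'' and ``absolutely $\Ra$ injectively $\Ra$ $\C$-closed'', and above all the observation recorded before the statement: $X$ is absolutely $\C$-closed if and only if every quotient $X/_{\approx}$ is injectively $\C$-closed. The equivalences $(1)\Leftrightarrow(2)\Leftrightarrow(3)$ will rest on two auxiliary facts about injective closedness that I would isolate first. \emph{Sufficiency}: a commutative semigroup that is $\C$-closed, bounded and group-finite is injectively $\C$-closed. \emph{Necessity}: an injectively $\C$-closed commutative semigroup is bounded and group-finite.

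I would prove necessity by contraposition, in the spirit of the $\mathsf{T_{\!1}S}$-version of Theorem~\ref{t:discrete}. If $X$ has an infinite subgroup $G$, then (being group-bounded as a $\C$-closed semigroup) $G$ is an infinite abelian group of bounded exponent, hence a direct sum of finite cyclic groups; such a $G$ is residually finite and embeds as a dense, \emph{non-closed} subgroup into a compact zero-dimensional group $K=\prod_i F_i$ of finite cyclic factors. Using Clifford+finiteness to split off a controllable remainder, I would extend this to an injective continuous homomorphism $X\to Y\in\mathsf{T_{\!z}S}\subseteq\C$ with non-closed image, contradicting injective $\C$-closedness; an unbounded (but periodic, group-bounded) $X$ is handled by the analogous construction producing a dense non-closed injective image in a compact zero-dimensional semigroup. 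Granting these two facts, the three short links fall out immediately: $(1)\Rightarrow(2)$ because the implication chains give injective and ideal $\C$-closedness while necessity gives boundedness; $(2)\Rightarrow(3)$ because necessity extracts group-finiteness from injective $\C$-closedness (ideal closedness and boundedness being assumed); and $(3)\Rightarrow(4)$ because ideal $\C$-closedness yields, through Theorem~\ref{t:mainP}, chain-finiteness, group-boundedness and Clifford+finiteness, to which one appends the assumed group-finiteness and boundedness.

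The heart of the argument, and the step I expect to be the main obstacle, is $(4)\Rightarrow(1)$. By the quotient criterion it suffices to show that every quotient $Q=X/_{\approx}$ is injectively $\C$-closed, and by sufficiency it is enough that each such $Q$ be $\C$-closed, bounded and group-finite. Two of these are cheap: since $(4)$ supplies chain-finiteness, group-boundedness and Clifford+finiteness, Theorem~\ref{t:mainP} makes $X$ projectively $\C$-closed, so every $Q$ is $\C$-closed; and boundedness passes to quotients since the image of $x^{n}\in E(X)$ is again idempotent. The genuinely nontrivial point is that \emph{group-finiteness is inherited by every quotient}: each maximal subgroup of each $Q$ must be shown finite.

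For this I would exploit the structure of commutative Clifford+finite semigroups, writing $X=H(X)\sqcup F$ with $F$ finite and $H(X)$ a semilattice $E$ of the finite groups $H_e$ of uniformly bounded exponent. The key structural mechanism is that in a commutative semilattice of groups, identifying two distinct idempotents $e\neq e'$ forces every $a\in H_e$ to be identified with $ae'\in H_{ee'}$; that is, collapsing idempotents pushes group layers \emph{down} to their meet rather than amalgamating them into a larger group, and chain-finiteness caps the descending interactions that can occur. Making this precise, I would show that every maximal subgroup of $Q$ is a homomorphic image of a single $H_e$, or else is governed by the finite set $F$, and is therefore finite. Verifying that no congruence can manufacture an infinite subgroup out of the bounded, individually finite, chain-finitely arranged pieces of $X$ is where the real work lies; once it is done, sufficiency applies to every quotient and the quotient criterion closes the cycle at $(4)\Rightarrow(1)$.
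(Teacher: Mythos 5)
Your cycle and the two middle links are exactly the paper's: $(2)\Ra(3)$ is Theorem~\ref{t:Z}(2) applied to $Z(X)=X$, and $(3)\Ra(4)$ is Theorem~\ref{t:mainP} plus the assumed boundedness and group-finiteness. But both ends of the cycle rest on auxiliary claims you do not prove, and these are precisely where the paper's real work sits. For $(1)\Ra(2)$ you extract boundedness from your ``necessity'' lemma, i.e.\ from the claim that an injectively $\C$-closed commutative semigroup is bounded, and your evidence for the unbounded case is a single clause (``the analogous construction producing a dense non-closed injective image''). The hard situation is a chain-finite, group-finite, periodic semigroup whose unboundedness is spread over infinitely many finite subgroups of unbounded exponent lying over an infinite antichain of idempotents; to produce a non-closed image there the paper runs the whole centroboundedness machine (Theorem~\ref{t:bounded}, with its ultrafilter-indexed topology on a quotient $X/I$, combined with Theorem~\ref{t:=>Acb}(1) and packaged as Corollary~\ref{c:IZ}), and the homomorphism it builds is a quotient by an ideal, not an injection. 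The paper only ever derives boundedness from \emph{absolute} closedness; nothing in it supports your stronger claim that injective closedness already suffices, and the fact that ``bounded'' is listed in condition (2) alongside ``injectively $\C$-closed'' should be read as a warning that this implication is not available.

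For $(4)\Ra(1)$ you reduce, via the quotient criterion, to showing that every quotient of $X$ is injectively $\C$-closed, which needs (i) your ``sufficiency'' lemma ($\C$-closed, bounded and group-finite implies injectively $\C$-closed) and (ii) group-finiteness of all quotients; you prove neither and explicitly defer (ii) as ``where the real work lies''. Note also that (i) as stated omits Clifford+finiteness, without which it is doubtful. The paper does none of this: Lemma~\ref{l:chain-closed2} shows directly that every chain-finite, group-finite, bounded, Clifford+finite, $E$-commutative semigroup is absolutely $\mathsf{T_{\!2}S}$-closed by a single topological argument --- take a homomorphism $h:X\to Y$ with dense image, use Stepp's Theorem~\ref{t:E} to get that $h[E(X)]=E(Y)$ is closed and chain-finite, and derive a contradiction from a hypothetical $y\in Y\setminus h[X]$ using the minimal idempotent $\mu$ of $E(X)\cap h^{-1}[{\uparrow}e]$ and the finiteness of $H_\mu\cup F$. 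That lemma closes the cycle without ever analyzing quotients or their maximal subgroups. So the skeleton of your proof matches the paper, but the two load-bearing steps are missing, and the substitutes you propose for them are not established by the surrounding results.
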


Theorems~\ref{t:main1} and \ref{t:main2} imply that the absolute $\C$-closedness of commutative semigroups is inherited by subsemigroups:

\begin{corollary}\label{c:her} Let $\C$ be a class of topological semigroups such that either $\C=\mathsf{T_{\!1}S}$ or $\mathsf{T_{\!z}S}\subseteq\C\subseteq\mathsf{T_{\!2}S}$. Every subsemigroup of an absolutely $\C$-closed commutative semigroup is absolutely $\C$-closed.
\end{corollary}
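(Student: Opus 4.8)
The plan is to deduce everything from the inner characterizations in Theorems~\ref{t:main1} and~\ref{t:main2}, splitting into the two cases allowed by the hypothesis.

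First, consider $\C=\mathsf{T_{\!1}S}$. By Theorem~\ref{t:main1}, a commutative semigroup is absolutely $\mathsf{T_{\!1}S}$-closed if and only if it is finite. Since every subsemigroup of a finite semigroup is finite, the conclusion is immediate in this case.

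The substantial case is $\mathsf{T_{\!z}S}\subseteq\C\subseteq\mathsf{T_{\!2}S}$. Here I would use the equivalence (1)$\Leftrightarrow$(4) of Theorem~\ref{t:main2}: a commutative semigroup is absolutely $\C$-closed if and only if it is chain-finite, bounded, group-finite and Clifford+finite. So it suffices to check that each of these four properties passes from a commutative semigroup $X$ to an arbitrary subsemigroup $Y\subseteq X$. Three of them are routine: any infinite subset of $Y$ is an infinite subset of $X$, so chain-finiteness is inherited; the uniform bound $n$ witnessing boundedness of $X$ works verbatim for $Y$; and every subgroup of $Y$ is in particular a subgroup of $X$, hence finite, so $Y$ is group-finite.

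The one step that requires an argument --- and the main obstacle --- is the inheritance of the Clifford+finite property, since a priori the Clifford part of $Y$ could be much smaller than $Y\cap H(X)$. The key observation is that boundedness forces $Y\cap H(X)\subseteq H(Y)$. Indeed, take $y\in Y\cap H(X)$, so that $y$ lies in the maximal subgroup $H_e$ of $X$ for some idempotent $e\in E(X)$. As $X$ is bounded with bound $n$, the power $y^n$ is an idempotent lying in the group $H_e$, and hence $y^n=e$ is the identity of $H_e$. Thus $y$ generates a finite cyclic group $\{y,y^2,\dots,y^n=e\}$ whose elements all belong to $Y$ (because $Y$ is a subsemigroup), so this cyclic group is a subgroup of $Y$ containing $y$, witnessing $y\in H(Y)$. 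Consequently $Y\setminus H(Y)\subseteq Y\setminus(Y\cap H(X))\subseteq X\setminus H(X)$, which is finite by the Clifford+finiteness of $X$; hence $Y$ is Clifford+finite. With all four properties verified for $Y$, a second application of Theorem~\ref{t:main2} yields that $Y$ is absolutely $\C$-closed, completing the plan.
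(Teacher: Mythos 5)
Your proposal is correct and follows the same route as the paper, which derives Corollary~\ref{c:her} directly from the inner characterizations in Theorems~\ref{t:main1} and~\ref{t:main2}. The paper leaves the hereditarity of the four properties unverified; your careful treatment of the Clifford+finite case (using boundedness to show $Y\cap H(X)\subseteq H(Y)$) correctly fills in the only nontrivial detail.
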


\begin{remark} Corollary~\ref{c:her}  does not generalize to noncommutative groups: by Theorem 1.17 in \cite{BB2},  every countable bounded group $G$ without elements of order 2 is a subgroup of an absolutely $\mathsf{T_{\!1}S}$-closed countable simple bounded group $X$. If the group $G$ has infinite center, then $G$ is not injectively $\mathsf{T_{\!z}S}$-closed by Theorem~\ref{t:Z}(2) below. On the other hand, $G$ is a subgroup of the absolutely $\mathsf{T_{\!1}S}$-closed group $X$. This example also shows that the equivalences $(1)\Leftrightarrow(4)$ in Theorems~\ref{t:main1} and \ref{t:main2} do not hold for non-commutative groups.
\end{remark}

For a semigroup $X$ let
$$Z(X)\defeq\{z\in X:\forall x\in X\;\;(zx=xz)\}$$be the {\em center} of $X$,
 and
$$I\!Z(X)\defeq\{z\in Z(X):zX\subseteq Z(X)\}$$ be the {\em ideal center} of $X$. Every commutative semigroup $X$ has $I\!Z(X)=Z(X)=X$.

The following theorem proved in \cite[\S5]{BB} and \cite{GCCS} describes some properties of the center of a semigroup possessing various closedness properties.

\begin{theorem}[Banakh-Bardyla]\label{t:Z} Let $X$ be a semigroup.
\begin{enumerate}
\item If $X$ is $\mathsf{T_{\!z}S}$-closed, then the center $Z(X)$ is chain-finite, periodic and nonsingular.
\item If $X$ is $\mathsf{T_{\!z}S}$-discrete or injectively $\mathsf{T_{\!z}S}$-closed, then $Z(X)$ is group-finite.
\item If $X$ is ideally $\mathsf{T_{\!z}S}$-closed, then $Z(X)$ is group-bounded.
\end{enumerate}
\end{theorem}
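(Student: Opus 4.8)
The plan is to prove all three items by contraposition: in each case I would turn a ``defect'' of the commutative semigroup $Z(X)$ into a topological semigroup $Y\in\mathsf{T_{\!z}S}$ together with a continuous homomorphism from $X$ whose image witnesses the failure of the relevant closedness (or discreteness) property. The common engine is a one- or few-point extension. Given an infinite ``witness'' subset $W\subseteq Z(X)$ encoding the defect, I would adjoin to $X$ a small remainder (a single absorbing point, or a copy of a completion), topologize the result so that $X$ is dense but not closed (resp.\ not discrete), and define the extended multiplication by formal limits $\xi\cdot x=\lim_{w\in W}wx$ and $x\cdot\xi=\lim_{w\in W}xw$. Because $W$ lies in the center, these one-sided limits coincide and the relevant neighbourhood filters are simultaneously left- and right-shift invariant, which is exactly what makes the extended operation associative and jointly continuous. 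This verification is the technical heart of the argument.

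For item (1) I would split into the three defects. If $Z(X)$ is not periodic, pick $z\in Z(X)$ with $\langle z\rangle\cong(\IN,+)$; if $Z(X)$ is singular, pick an infinite $A\subseteq Z(X)$ with $AA$ a singleton; if $Z(X)$ is not chain-finite, pick an infinite $I\subseteq Z(X)$ with $xy\in\{x,y\}$ for all $x,y\in I$. In each case the witness $W\in\{\langle z\rangle,A,I\}$ is a transparent commutative semigroup (monogenic, null, or a chain), and from it one builds a free, two-sided shift-invariant filter $\F$ on $X$ with $\bigcap\F=\varnothing$. The one-point extension $X\cup\{\infty\}$, with $\infty$ a zero and with neighbourhoods of $\infty$ given by $\F$, then lies in $\mathsf{T_{\!z}S}$, contains $X$ densely, and satisfies $\infty\in\overline X\setminus X$, contradicting $\mathsf{T_{\!z}S}$-closedness of $X$. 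Centrality enters through the shift-invariance $x^{-1}F\in\F$ and an inclusion of the form $F\cdot F\subseteq F'$, which give continuity at $(\infty,x)$ and at $(\infty,\infty)$.

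For item (2) assume $Z(X)$ has an infinite subgroup $G$. Since every infinite abelian group admits a non-discrete zero-dimensional Hausdorff group topology $\tau_G$, I would transport $\tau_G$ along the central shifts to a non-discrete zero-dimensional Hausdorff semigroup topology on $X$; the identity map then embeds $X$ injectively and continuously into a member of $\mathsf{T_{\!z}S}$ with non-discrete image, so $X$ is not $\mathsf{T_{\!z}S}$-discrete. Completing $G$ in $\tau_G$ (equivalently, embedding $G$ densely into a compact zero-dimensional group) and extending the embedding to $X$ by the same central-limit recipe yields an injective continuous homomorphism of $X$ into $\mathsf{T_{\!z}S}$ with non-closed image, so $X$ is not injectively $\mathsf{T_{\!z}S}$-closed.

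For item (3) assume $Z(X)$ has an unbounded subgroup $G$. Unboundedness is precisely the feature that lets the abelian group $G$ embed with dense, non-closed image into a compact zero-dimensional abelian group $K$ (bounded abelian groups, being algebraically compact, resist this — which is why boundedness is the dividing line, as already reflected in Theorems~\ref{t:C-closed} and~\ref{t:mainP}). I would then pass to a suitable quotient $X/J$ by an ideal $J$ isolating $G$ (or an unbounded homomorphic image of it) in the center of $X/J$, and combine the non-closed embedding of $G$ into $K$ with the central-limit extension to realize $X/J$ in $\mathsf{T_{\!z}S}$ with non-closed image; hence $X/J$ is not $\mathsf{T_{\!z}S}$-closed and $X$ is not ideally $\mathsf{T_{\!z}S}$-closed. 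Throughout, the main obstacle is the extension step of the first paragraph: carrying a construction that is natural on the commutative object ($Z(X)$, $G$, or its completion) over to the whole, possibly noncommutative, semigroup $X$ while preserving associativity and joint continuity at the adjoined points. Centrality of the witness is what makes this possible, and checking it carefully — rather than the commutative inputs, which are supplied by Theorems~\ref{t:C-closed} and~\ref{t:mainP} together with classical facts on topologizing abelian groups — is where essentially all the work lies.
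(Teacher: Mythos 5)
First, a point of reference: the paper does not prove Theorem~\ref{t:Z} at all --- it is imported verbatim from \cite[\S5]{BB} and \cite{GCCS} --- so there is no in-paper proof to compare your outline against. Judged on its own terms, your outline correctly identifies the general shape of the arguments in those references (contraposition; adjoining limit/absorbing points to $X$ or to a quotient of $X$; using centrality of the witness set to make left and right translates of the new neighbourhood filters agree, which is what saves associativity and joint continuity). But everything you flag as ``the technical heart'' is exactly where the content lies, and it is all deferred: in particular you never say what the adjoined point multiplies to against an element $x\in X$ for which $wx$ drifts away from the witness set $W$ (in the actual constructions such $x$ are absorbed into an adjoined zero, and making that consistent is what forces the coideal/viability machinery elsewhere in this paper).

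There is also one genuine conceptual error, in item (3). Ideal $\mathsf{T_{\!z}S}$-closedness quantifies over \emph{isomorphic topological embeddings} of the discrete quotient $X/J$, so a witness to its failure must be a semigroup $Y\in\mathsf{T_{\!z}S}$ containing a \emph{discrete} non-closed copy of $X/J$. Your proposed witness embeds $G$ densely into a compact zero-dimensional group $K$; the image is then non-discrete, so it says nothing about $\mathsf{T_{\!z}S}$-closedness of $X/J$ (it could only address the \emph{injective} closedness of item (2), where arbitrary injective continuous homomorphisms are allowed). Moreover the parenthetical justification --- that bounded abelian groups ``resist'' dense non-closed embeddings into compact zero-dimensional groups --- is false: $\bigoplus_{n\in\w}\IZ/2\IZ$ is bounded yet dense and non-closed in $(\IZ/2\IZ)^{\w}$. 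Boundedness is the dividing line for closedness of \emph{discrete} copies (this is the group case of Theorem~\ref{t:C-closed}), not for the existence of dense completions, so the mechanism you propose for item (3) would have to be replaced by a different construction that keeps $X/J$ discrete while still exploiting the unbounded orders in $G$.
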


In~\cite{GCCS} it was proved that the (ideal) $\C$-closedness is inherited by the ideal center:

\begin{theorem}[Banakh-Bardyla]\label{t:ideal-center} Let $\C$ be a class of topological semigroups such that $\mathsf{T_{\!z}S}\subseteq\C\subseteq\mathsf{T_{\!1}S}$.
For any (ideally) $\C$-closed semigroup $X$, its ideal center $I\!Z(X)$ is (ideally) $\C$-closed.
\end{theorem}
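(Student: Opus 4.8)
The plan is to reduce both assertions to the inner characterizations of Theorems~\ref{t:C-closed} and~\ref{t:mainP}, to harvest most of the required properties from Theorem~\ref{t:Z}, and to establish the two genuinely new properties by a direct embedding construction. First I would record the algebraic structure. A routine check shows that $I\!Z(X)$ is a commutative two-sided ideal of $X$: for $z\in I\!Z(X)$ and $x\in X$ one has $zx=xz\in Z(X)$, and for every $y\in X$ also $(zx)y=z(xy)\in Z(X)$, so $zx\in I\!Z(X)$. Moreover, if $e\in E(X)\cap I\!Z(X)$ and $g\in H_e$, then $g=eg\in I\!Z(X)$ because $I\!Z(X)$ is an ideal; hence every maximal subgroup of $X$ whose idempotent lies in $I\!Z(X)$ is contained in $I\!Z(X)$. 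Consequently the subgroups of $I\!Z(X)$ are exactly these $H_e$, and $I\!Z(X)\setminus H(I\!Z(X))=I\!Z(X)\setminus H(X)$. Since $I\!Z(X)$ is commutative, Theorem~\ref{t:C-closed} reduces the first assertion to showing $I\!Z(X)$ is chain-finite, nonsingular, periodic and group-bounded, while Theorem~\ref{t:mainP} reduces the second to showing $I\!Z(X)$ is chain-finite, group-bounded and Clifford+finite.

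Next I would collect the cheap properties. As $\mathsf{T_{\!z}S}\subseteq\C$, $\C$-closedness implies $\mathsf{T_{\!z}S}$-closedness and ideal $\C$-closedness implies ideal $\mathsf{T_{\!z}S}$-closedness, so Theorem~\ref{t:Z}(1) makes $Z(X)$ chain-finite, periodic and nonsingular, and in the ideal case Theorem~\ref{t:Z}(3) makes $Z(X)$ group-bounded. Each of these passes to the subsemigroup $I\!Z(X)\subseteq Z(X)$: chain-finiteness and nonsingularity because any infinite offending subset of $I\!Z(X)$ already lies in $Z(X)$, periodicity elementwise, and group-boundedness because the subgroups of $I\!Z(X)$ are subgroups of $Z(X)$. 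Thus the first assertion is reduced to \emph{group-boundedness} of $I\!Z(X)$ and the second to \emph{Clifford+finiteness} of $I\!Z(X)$ — the only conditions Theorem~\ref{t:Z} does not supply. Neither is automatic: an unbounded abelian group is chain-finite, nonsingular and periodic (so group-boundedness is genuinely required), and $\C$-closedness does not force Clifford+finiteness.

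For these two remaining properties I would argue by contradiction. Suppose $I\!Z(X)$ has an unbounded subgroup $H$ (respectively an infinite subset $A\subseteq I\!Z(X)\setminus H(X)$). Then, since unboundedness (respectively an infinite non-Clifford part) violates the criterion of Theorem~\ref{t:C-closed} (respectively~\ref{t:mainP}), one can topologize $I\!Z(X)$ as a non-closed subsemigroup $S$ of a zero-dimensional Hausdorff topological semigroup. I would then \emph{amalgamate} $X$ with $S$ along $I\!Z(X)$: the underlying space is $(X\setminus I\!Z(X))\sqcup S$, with the points of $X\setminus I\!Z(X)$ isolated and $S$ carrying its topology, so the whole space lies in $\mathsf{T_{\!z}S}\subseteq\C$; the multiplication is to extend both that of $X$ and that of $S$, with mixed products $z\cdot s$ and $s\cdot z$ (for $z\in X\setminus I\!Z(X)$, $s\in S$) defined through the translations $i\mapsto zi$ of $I\!Z(X)$. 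If such an amalgam $Z$ exists, then $X$ embeds in $Z\in\C$ with non-closed image (its closure meets the adjoined limit points of $S$), contradicting $\C$-closedness of $X$; in the ideal case one runs the same construction with the Rees quotient of $X$ by the ideal generated by the witness, contradicting ideal $\C$-closedness.

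The main obstacle is exactly this amalgamation: one must define $z\cdot s$ for $z\in X\setminus I\!Z(X)$ and a limit point $s\in S\setminus I\!Z(X)$ so that the resulting multiplication on $Z$ is associative and jointly continuous. Here the defining property of the \emph{ideal} center, $I\!Z(X)\cdot X\subseteq Z(X)$ (strictly stronger than centrality), is what makes the construction go through, and it is precisely what distinguishes $I\!Z(X)$ from an arbitrary ideal, for which the conclusion can fail. The delicate point is continuity: each translation $\lambda_z\colon i\mapsto zi$ of $I\!Z(X)$ must extend continuously to $S$, i.e.\ the nets $\lambda_z(i_\alpha)$ must converge in $S$. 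I would handle this by choosing the topologization $S$ compatibly with the entire $X$-action from the outset — for instance realizing $S$ inside the translational hull of $I\!Z(X)$, where every $\lambda_z$ extends continuously by construction — so that the amalgamated multiplication is automatically associative and continuous, completing the contradiction.
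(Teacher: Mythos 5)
First, a framing point: the paper does not actually prove Theorem~\ref{t:ideal-center} --- it is imported from \cite{GCCS}, and the only hint given about its proof is that the ``ideal'' part was derived there from Theorem~\ref{t:ideal}; the closest in-paper analogue is the proof of Corollary~\ref{c:IZ}. Your reductions match that template and are correct: $I\!Z(X)$ is a commutative ideal of $X$ whose maximal subgroups are full maximal subgroups $H_e$ of $X$ with $e\in E(I\!Z(X))$, so by Theorems~\ref{t:C-closed} and~\ref{t:mainP} everything comes down to a short list of properties, most of which are inherited from $Z(X)$ via Theorem~\ref{t:Z}. Up to that point the proposal is sound.

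The genuine gap is that the two properties you correctly isolate as ``not supplied by Theorem~\ref{t:Z}'' are exactly the substance of the theorem, and your plan for them is an unexecuted sketch. For Clifford+finiteness in the ideal case you do not need any new construction at all: by Lemma~\ref{l:viable} and periodicity, $I\!Z(X)\setminus H(I\!Z(X))=I\!Z(X)\cap\korin{\IN}{E(I\!Z(X))}\setminus H(X)\subseteq Z(X)\cap\korin{\IN}{V\!E(X)}\setminus H(X)$, which is finite by Theorem~\ref{t:ideal} --- this is precisely how the paper says the ideal part was derived, and how Corollary~\ref{c:IZ} argues; your proposal instead reinvents this via a much harder route. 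For the remaining property (group-boundedness of $I\!Z(X)$ under mere $\C$-closedness, and your alternative route to Clifford+finiteness), the ``amalgamation'' is where the whole difficulty lives, and it is not carried out. Concretely: the non-closed topologization of $I\!Z(X)$ produced by the necessity direction of Theorem~\ref{t:C-closed} comes with no compatibility whatsoever with the translations $i\mapsto zi$ for $z\in X\setminus I\!Z(X)$, so there is no reason these extend continuously to the adjoined limit points; and the suggested fix of ``realizing $S$ inside the translational hull'' does not resolve this, because one would then have to re-prove that $I\!Z(X)$ can be made non-closed \emph{inside a topology on the translational hull that is simultaneously a semigroup topology and $X$-equivariant} --- i.e., redo the witness construction from scratch in an equivariant form. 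That is essentially the content of \cite{GCCS}, and asserting that it ``goes through'' because $I\!Z(X)\cdot X\subseteq Z(X)$ is not a proof. As it stands, the argument establishes only the parts that follow formally from the quoted theorems and leaves the core step as a conditional (``if such an amalgam exists'').
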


Theorem~\ref{t:ideal-center} suggests the following problem.

\begin{problem}\label{prob} Let $\C$ be a class of topological semigroups. Is the (ideal) center of any absolutely $\C$-closed semigroup $X$ absolutely $\C$-closed?
\end{problem}

The ``ideal'' version of Problem~\ref{prob} has an affirmative answer.

\begin{theorem}\label{t:ac=>ideal} Let $\C$ be a class of topological semigroups such that either $\C=\mathsf{T_{\!1}S}$ or $\mathsf{T_{\!z}S}\subseteq\C\subseteq\mathsf{T_{\!2}S}$. Every absolutely $\C$-closed semigroup $X$ has absolutely $\C$-closed ideal center $I\!Z(X)$.
\end{theorem}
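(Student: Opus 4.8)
The plan is to verify, for the commutative semigroup $I\!Z(X)$, the inner characterizations of absolute $\C$-closedness supplied by Theorems~\ref{t:main1} and~\ref{t:main2}. First I would record that $I\!Z(X)$ is a \emph{commutative ideal} of $X$: it is commutative because $I\!Z(X)\subseteq Z(X)$, and for $z\in I\!Z(X)$ and $x\in X$ we have $zx\in Z(X)$, while $(zx)y=z(xy)\in zX\subseteq Z(X)$ for every $y\in X$, so $zx\in I\!Z(X)$; hence $X\!\cdot\! I\!Z(X)\cup I\!Z(X)\!\cdot\! X\subseteq I\!Z(X)$. Being commutative, $I\!Z(X)$ is governed by Theorems~\ref{t:main1} and~\ref{t:main2}, so it remains only to establish the relevant structural properties of $I\!Z(X)$.

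Next I would harvest the ``cheap'' properties from the assumption that $X$ is absolutely $\C$-closed. Since absolute $\C$-closedness implies both injective and ideal $\C$-closedness, and since $\mathsf{T_{\!z}S}\subseteq\C$, the semigroup $X$ is in particular injectively $\mathsf{T_{\!z}S}$-closed and ideally $\C$-closed. Theorem~\ref{t:Z}(2) then shows that $Z(X)$, and hence its subsemigroup $I\!Z(X)$, is group-finite; Theorem~\ref{t:ideal-center} shows that $I\!Z(X)$ is ideally $\C$-closed, whence by Theorem~\ref{t:mainP} it is chain-finite, group-bounded and Clifford+finite. Thus $I\!Z(X)$ already satisfies three of the four conditions of Theorem~\ref{t:main2}(4): it is chain-finite, group-finite and Clifford+finite.

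The only remaining condition in the case $\mathsf{T_{\!z}S}\subseteq\C\subseteq\mathsf{T_{\!2}S}$ is \emph{boundedness}, and I expect this to be the main obstacle. Boundedness does not follow formally from what we have: for instance a flat union of finite cyclic groups of unbounded orders (groups $\IZ_{n_k}$ with cross products equal to a common zero) is chain-finite, group-finite and Clifford+finite but not bounded, so boundedness must be extracted from absolute—rather than merely ideal or injective—closedness. I would argue by contradiction using the reformulation stated in the introduction, namely that $X$ is absolutely $\C$-closed if and only if every quotient $X/_{\approx}$ is injectively $\C$-closed. Assuming $I\!Z(X)$ unbounded, group-finiteness and Clifford+finiteness produce central idempotents $e_k$ and elements $g_k\in H_{e_k}$ whose orders tend to infinity, and the goal is to manufacture a congruence $\approx$ on $X$ for which $X/_{\approx}$ fails to be injectively $\mathsf{T_{\!z}S}$-closed. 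The delicate point—and the technical heart of the proof—is that any witnessing identification or topologization has to be set up on the central ideal $I\!Z(X)$ and then \emph{promoted} to a genuine congruence on all of $X$; this is possible precisely because $I\!Z(X)$ is a central ideal, so that a congruence on $I\!Z(X)$ which respects all translations $z\mapsto xz$ ($x\in X$) extends to $X$ by the identity relation off $I\!Z(X)$. The genuine difficulty is the continuity/extension of these translations into the receiving semigroup in $\C$ (equivalently, that $X$ maps into a suitably $\C$-topologized translational hull of the image of $I\!Z(X)$); controlling this while ensuring the quotient of $X$ really violates injective $\mathsf{T_{\!z}S}$-closedness is where the work lies.

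Finally, for $\C=\mathsf{T_{\!1}S}$ I would first apply the already-treated case $\mathsf{T_{\!z}S}\subseteq\C\subseteq\mathsf{T_{\!2}S}$—legitimate because absolute $\mathsf{T_{\!1}S}$-closedness implies absolute $\mathsf{T_{\!2}S}$-closedness—to conclude that $I\!Z(X)$ is chain-finite, bounded, group-finite and Clifford+finite, and then upgrade to finiteness via Theorem~\ref{t:main1}. The extra input is projective $\mathsf{T_{\!1}S}$-discreteness of $X$ (Theorem~\ref{t:discrete}(2)): were $I\!Z(X)$ infinite, the standing structural conditions would force an infinite semilattice of idempotents inside $I\!Z(X)$, which carries a non-discrete topological-semigroup topology; transferring this topologization to a quotient of $X$ through the same central-ideal extension device would contradict projective $\mathsf{T_{\!1}S}$-discreteness. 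In this way the single extension-from-the-central-ideal mechanism handles both cases, and the proof concludes by invoking the implications $(4)\Ra(1)$ of Theorem~\ref{t:main2} (respectively of Theorem~\ref{t:main1}).
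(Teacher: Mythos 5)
Your reductions are sound and match the paper's strategy in outline: $I\!Z(X)$ is indeed a commutative ideal, Theorem~\ref{t:Z}(2) gives group-finiteness, Theorems~\ref{t:ideal-center} and \ref{t:mainP} give chain-finiteness and the Clifford+finite property, and your flat-union-of-cyclic-groups example correctly shows that boundedness is genuinely extra information. But that is exactly where your proposal stops being a proof: the boundedness of $I\!Z(X)$ is the technical heart of the whole theorem, and you explicitly defer it (``this is where the work lies''). What you are missing is not just the execution but the key structural observation that makes the execution possible: for every $e\in E(I\!Z(X))$ one has $H_e=H_ee\subseteq X\cdot I\!Z(X)\subseteq Z(X)$, so these maximal subgroups are \emph{commutative}. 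The paper exploits this as follows (Sections~\ref{s:bounded} and \ref{s:Acb}): for a countable antichain $B\subseteq E(I\!Z(X))\subseteq V\!E(X)$ the coideal $C=\bigcap_{e\in B}\frac{H_e}{e}$ maps homomorphically into the commutative group $\prod_{e\in B}H_e$; the image is projectively $\mathsf{T_{\!z}S}$-closed (Lemma~\ref{l:pci}), hence by Theorems~\ref{t:Z} and \ref{t:C-closed} it is a \emph{bounded} group, which yields the $B$-centroboundedness of $X$ (Theorem~\ref{t:=>Acb}(1)); Theorem~\ref{t:bounded} then converts centroboundedness into boundedness of $Z(X)\cap\korin{\IN}{E(I\!Z(X))}\supseteq I\!Z(X)$ via a long ultrafilter-based construction of a zero-dimensional topology on a Rees quotient $X/I$ and a non-closed Hausdorff extension of it. Your proposed mechanism (extend a congruence from the central ideal $I\!Z(X)$ to $X$ by the identity off it, then topologize) is in the right spirit but is not the paper's device --- the paper collapses the ideal $X\setminus\bigcup_n\frac{H_{e_n}}{e_n}$, which is not contained in $I\!Z(X)$ --- and in either form the actual construction of the witnessing topology is the entire difficulty, which your text does not attempt.

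The $\C=\mathsf{T_{\!1}S}$ case is likewise only sketched in your proposal, and your route through ``an infinite semilattice of idempotents carries a non-discrete topology, transfer it to a quotient'' leaves the transfer unexecuted; note that the paper closes this case in two lines by citing Theorem~\ref{t:mainT1}: since $E(I\!Z(X))\subseteq V\!E(X)$ (Lemma~\ref{l:viable}) and $Z(X)$ is periodic, $I\!Z(X)\subseteq Z(X)\cap\korin{\IN}{V\!E(X)}$, and the latter set is finite for any absolutely $\mathsf{T_{\!1}S}$-closed semigroup. So the final invocations of Theorems~\ref{t:main1} and \ref{t:main2}(4)$\Rightarrow$(1) are legitimate and non-circular, but as written your argument has a genuine gap at its central step.
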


The ``non-ideal'' version of Problem~\ref{prob} has an affirmative answer for  group-commutative $Z$-viable semigroups.

Following Putcha and Weissglass \cite{PW} we call semigroup $X$ {\em viable} if for any $x,y\in X$ with  $\{xy,yx\}\subseteq E(X)$ we have $xy=yx$. This notion can be localized using the notion of a viable idempotent.

An idempotent $e$ in a semigroup $X$ is defined to be {\em viable} if the set
$$\tfrac{H_e}e\defeq\{x\in X:xe=ex\in H_e\}$$
is a {\em coideal} in $X$ in the sense that $X\setminus\frac{H_e}e$ is an ideal in $X$. By $V\!E(X)$ we denote the set of viable idempotents of a semigroup $X$. 

By Theorem~3.2 of \cite{BanE}, a semigroup $X$ is viable if and only if each idempotent of $X$ is viable if and only if for every $x,y\in X$ with $xy=e\in E(X)$ we have $xe=ex$ and $ye=ey$. This characterization implies that every semigroup $X$ with $E(X)\subseteq Z(X)$ is viable. In particular, every commutative semigroup is viable.

For ideally (absolutely) $\mathsf{T_{\!z}S}$-closed semigroups we have the following description of the structure of maximal subgroups of viable idempotents, see \cite[Theorem 1.7]{GCCS}.

\begin{theorem}[Banakh-Bardyla]\label{t:G} Let $e$ be a viable idempotent of a semigroup $X$ and $H_e$ be the maximal subgroup of $e$ in $X$.
\begin{enumerate}
\item If $X$ is ideally $\mathsf{T_{\!z}S}$-closed, then the group $Z(H_e)$ is bounded.
\item If $X$ is absolutely $\mathsf{T_{\!z}S}$-closed, then the group $Z(H_e)$ is finite.
\end{enumerate}
\end{theorem}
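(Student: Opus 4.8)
The plan is to deduce both items from the center estimates of Theorem~\ref{t:Z}, after replacing $X$ by a semigroup manufactured from the viable idempotent $e$. Put $S\defeq\tfrac{H_e}{e}$ and $J\defeq X\setminus S$; by the definition of viability, $J$ is an ideal of $X$. The first step is to record the structural facts resting on this. I would check that $S$ is a subsemigroup and that $\pi\colon S\to H_e$, $\pi(s)\defeq se$, is a retraction homomorphism: for $s,t\in S$ one has $(st)e=(se)(te)$, while $\pi|_{H_e}=\mathrm{id}$ because $he=h$ for $h\in H_e$. Since $J$ is an ideal, $\pi$ extends to a homomorphism $h\colon X\to H_e^0$ onto the group-with-zero $H_e^0$ by declaring $h|_J\equiv 0$. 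Thus $H_e^0\cong X/\!\ker h$ and $S^0\cong X/J$, and since the adjoined zero is central one gets $Z(H_e^0)=Z(H_e)\cup\{0\}$ and $Z(S^0)=Z(S)\cup\{0\}$. (If $J=\emptyset$ the adjoined zeros are dropped and the maps become $\pi\colon X\to H_e$ and $\mathrm{id}\colon X\to S$.)

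For item (2) this setup already does the job. As $X$ is absolutely $\mathsf{T_{\!z}S}$-closed, every congruence quotient of $X$ is injectively $\mathsf{T_{\!z}S}$-closed; in particular $X/\!\ker h\cong H_e^0$ is injectively $\mathsf{T_{\!z}S}$-closed. Theorem~\ref{t:Z}(2) applied to $H_e^0$ then makes $Z(H_e^0)$ group-finite, and since $Z(H_e)$ is a subgroup of $Z(H_e^0)$, it is finite.

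Item (1) is more delicate, and this is where I expect the real difficulty. Ideal closedness controls only quotients by \emph{ideals}, and the retraction $S\to H_e$ is not such a quotient; hence, in contrast to (2), I cannot pass to $H_e^0$. Instead I would argue at the level of $S^0\cong X/J$. This semigroup is ideally $\mathsf{T_{\!z}S}$-closed: every ideal of $S^0$ has the form $K/J$ for an ideal $K\supseteq J$ of $X$, and $(S^0)/(K/J)\cong X/K$ is $\mathsf{T_{\!z}S}$-closed because $X$ is ideally closed. Theorem~\ref{t:Z}(3) then yields that $Z(S^0)$ is group-bounded. The crux is the containment $Z(H_e)\subseteq Z(S)$: given $z\in Z(H_e)$ and $s\in S$ one has $es=se\in H_e$ and \[ zs=(ze)s=z(es)=z(se)=(se)z=s(ez)=sz, \] using $ze=ez=z$, the commutativity of $z$ with $se\in H_e$, and associativity. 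Therefore $Z(H_e)$ is a subgroup of $Z(S)\subseteq Z(S^0)$, and group-boundedness of $Z(S^0)$ forces $Z(H_e)$ to be bounded.

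The hard part is thus the asymmetry between the two items. For the ideal case one is not allowed to collapse the retraction, so everything hinges on the lemma $Z(H_e)\subseteq Z(\tfrac{H_e}{e})$, which carries the central subgroup $Z(H_e)$ into the center of the ideally closed semigroup $X/J$, precisely where Theorem~\ref{t:Z}(3) can act. Establishing this centrality, together with the homomorphism properties of $\pi$ and $h$ — all of which rely on viability of $e$ through the ideal property of $J$ — is the substance of the argument; the invocations of Theorem~\ref{t:Z} are then immediate.
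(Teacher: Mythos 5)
Your argument is correct, but note that the paper does not actually prove Theorem~\ref{t:G}: it imports it verbatim from \cite[Theorem 1.7]{GCCS}, so there is no in-paper proof to compare against. What you give is a clean reduction of Theorem~\ref{t:G} to Theorem~\ref{t:Z} (itself quoted from \cite{BB} and \cite{GCCS}), and all the steps check out: $\tfrac{H_e}{e}$ is a prime coideal by viability, so $x\mapsto xe$ extended by $0$ on the complementary ideal is a homomorphism onto $H_e^0$, whence $H_e^0$ is injectively $\mathsf{T_{\!z}S}$-closed when $X$ is absolutely $\mathsf{T_{\!z}S}$-closed and Theorem~\ref{t:Z}(2) gives finiteness of $Z(H_e)$; for (1) your observation that $Z(H_e)\subseteq Z\big(\tfrac{H_e}{e}\big)$ is the right pivot, since the Rees quotient $X/J\cong\big(\tfrac{H_e}{e}\big)^0$ inherits ideal $\mathsf{T_{\!z}S}$-closedness (via $(X/J)/(K/J)\cong X/K$) and Theorem~\ref{t:Z}(3) then bounds every subgroup of its center, in particular $Z(H_e)$. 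Whether this coincides with the route taken in \cite{GCCS} cannot be judged from the present text, but as a derivation from the results that are stated in this paper it is complete; the only cosmetic slip is the typo ``$b=z_n^tC$''-style set/element conflations do not occur here, though you should handle the degenerate case $J=\emptyset$ exactly as you do, by dropping the adjoined zero.
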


A semigroup $X$ is called {\em $Z$-viable} if $Z(X)\cap E(X)\subseteq V\!E(X)$, i.e., if each central idempotent of $X$ is viable. It is clear that each viable semigroup is $Z$-viable. On the other hand, there exist semigroups which are not $Z$-viable, see Remark~\ref{rem26}.

For a subset $A$ of semigroup $X$ let
$$\korin{\IN}{\!A}\defeq\bigcup_{n\in\IN}\korin{n}{\!A}\quad\mbox{where}\quad\korin{n}{\!A}\defeq\{x\in X:x^n\in A\}.$$
A subset $B$ of a semigroup $X$ is called {\em bounded} if $B\subseteq\korin{n}{\!E(X)}$ for some $n\in\IN$.
In fact, the ``ideal'' part of Theorem~\ref{t:ideal-center} was derived in \cite{GCCS} from the following theorem, which will be essentially used also in this paper:

\begin{theorem}[Banakh--Bardyla]\label{t:ideal} If a semigroup $X$ is ideally $\mathsf{T_{\!z}S}$-closed, then the set\\ $Z(X)\cap\korin{\IN}{V\!E(X)}\setminus H(X)$ is finite.
\end{theorem}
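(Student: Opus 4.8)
The plan is to argue by contradiction: assuming $A\defeq Z(X)\cap\korin{\IN}{V\!E(X)}\setminus H(X)$ is infinite, I will produce an ideal $I\subseteq X$ for which $X/I$ fails to be $\Zero$-closed, contradicting ideal $\Zero$-closedness. Since $X$ is in particular $\Zero$-closed, Theorem~\ref{t:Z}(1) gives that $Z(X)$ is chain-finite, periodic and nonsingular, and these will be used throughout. For each $z\in A$ the monogenic semigroup $\langle z\rangle$ is commutative with a unique idempotent, which (as $z$ is central) is a \emph{central} idempotent $e_z=z^{n_z}\in Z(X)\cap E(X)$; moreover $ze_z\in H_{e_z}$, so $z\in\frac{H_{e_z}}{e_z}$, and $z\notin H_{e_z}$ forces $ze_z\neq z$. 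The key preliminary is a non-divisibility lemma: \emph{if $z$ is central with $z^{n_z}=e_z\in E(X)$ and $z\in X^1z^2$, then $z\in H_{e_z}\subseteq H(X)$} (writing $z=uz^2$ and iterating gives $z=u^{n_z-1}z^{n_z}=u^{n_z-1}e_z\in e_zX$, and $z\cdot z^{n_z-1}=e_z$ makes $z$ invertible in the monoid $e_zX$). Thus every $z\in A$ satisfies $z\notin X^1z^2$.

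The argument then splits according to whether $z\mapsto e_z$ has infinite or finite range. Because $Z(X)$ is chain-finite, the semilattice $E(Z(X))$ contains no infinite chain, so by Ramsey's theorem an infinite range contains an infinite antichain. In that case I fix an infinite $A_1\subseteq A$ on which $z\mapsto e_z$ is injective with antichain image, and let $I$ be the ideal of $X$ generated by $\{zz':z,z'\in A_1\}$. For $z=u\,z'z''$ with $z',z''\in A_1$, taking a high common power and using centrality of $z',z''$ yields $e_z\le e_{z'}\wedge e_{z''}$; the antichain condition then forces $e_{z'}=e_{z''}=e_z$, hence $z'=z''=z$ by injectivity, so $z\in X^1z^2$, contradicting the lemma. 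Therefore $A_1\cap I=\varnothing$, and in $X/I$ the images $\{[z]:z\in A_1\}$ form an infinite set of central elements all of whose products equal the zero $[I]$; thus $Z(X/I)$ is singular, contradicting Theorem~\ref{t:Z}(1) applied to the $\Zero$-closed semigroup $X/I$.

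If instead $z\mapsto e_z$ has finite range, some central idempotent $e$ has $A'\defeq\{z\in A:e_z=e\}$ infinite. Here I pass to $Q\defeq X/eX$ (the ideal $eX=Xe$); its ideal quotients are ideal quotients of $X$, so $Q$ is again ideally $\Zero$-closed. Each $z\in A'$ is central and avoids $eX$ (as $z\notin H_e$), while $z^{n_z}=e\in eX$, so its image $q_z=[z]$ is a nonzero central \emph{nilpotent} of $Q$ with zero $0=[eX]$, and distinct $z$ give distinct $q_z$. Hence everything reduces to the following Sublemma: \emph{an ideally $\Zero$-closed semigroup $S$ with a zero has only finitely many nonzero nilpotent elements in $Z(S)$.}

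To prove the Sublemma I would first handle \emph{square-zero} central elements ($r\neq0$, $r^2=0$): ordering them by divisibility ($r\preceq r'$ iff $r'\in S^1r$) and applying Ramsey, an infinite family gives either an infinite pairwise-comparable set — where $r\preceq r'$ together with centrality of $r$ yields $rr'=0$, so $BB=\{0\}$ and $Z(S)$ is singular — or an infinite antichain, where the ideal generated by the pairwise products misses the antichain (any $r_k=u\,r_ir_j$ forces $r_i\preceq r_k$) and the quotient is again singular; either way Theorem~\ref{t:Z}(1) is contradicted, so square-zero central elements are finite in number. Next I bound nilpotency degrees: an element of large degree $d$ has its top $\sim d/2$ distinct powers all square-zero, so unbounded degrees would contradict this finiteness. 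Finally I induct on the degree $d$: given infinitely many $q_i$ of degree $d$ with common top power $q_i^{\,d-1}=r\neq0$, the cross-products satisfy $(q_iq_j)^{d-1}=r^2=0$, hence lie in a finite set by induction, so a monochromatic set has $q_iq_j=c$ constant; collapsing $c$ and the squares $q_i^2$ by a single ideal, an $(d-1)$-st power computation shows this ideal still avoids every $q_k$, producing a singular central quotient. The main obstacle is exactly this Sublemma — specifically the bookkeeping ensuring the collapsing ideal genuinely misses infinitely many nilpotents when the dividing factors $u$ need not be central — and it is here that nonsingularity, applied repeatedly through ideal quotients, does the decisive work.
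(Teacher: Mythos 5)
The paper does not actually prove Theorem~\ref{t:ideal}: it is quoted from \cite{GCCS} and used here as a black box, so there is no in-paper proof to compare yours against. Judged on its own, your argument is correct in outline and every load-bearing step checks out. The non-divisibility lemma ($z$ central, $z^{n_z}=e_z$, $z\in X^1z^2\Rightarrow z=u^{n_z-1}e_z\in H_{e_z}$) is a standard computation; in the antichain case, raising $z=uz'z''$ to a common power $N$ with $z^N=e_z$, $(z')^N=e_{z'}$, $(z'')^N=e_{z''}$ does give $e_z\le e_{z'}$ and $e_z\le e_{z''}$, so the ideal generated by the products $zz'$ misses $A_1$ and the quotient has singular center; the reduction of the constant-idempotent case to central nilpotents of $X/eX$ is sound (your observation that $z\in eX$ together with $z^{n_z}=e$ already forces $z\in H_e$ is exactly the point); and the Sublemma's three stages (Ramsey on the divisibility preorder for square-zero central elements, the resulting bound on nilpotency degree, and the induction on degree via the $(d-1)$-st power computation) all go through, with Theorem~\ref{t:Z}(1) applied to ideal quotients as the only external input. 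Two remarks. First, your proof nowhere uses viability of the $e_z$, only their centrality, so if every detail survives scrutiny it proves the formally stronger statement that $Z(X)\setminus H(X)$ is finite (by periodicity, $Z(X)=Z(X)\cap\korin{\IN}{E(X)\cap Z(X)}$); this is consistent with Theorem~\ref{t:mainP} in the commutative case, but since the source \cite{GCCS} states the result with $V\!E(X)$, the hypothesis is presumably not decorative there, and you should double-check whether your argument really bypasses it or whether I and you have both missed where it is needed. Second, your route is structurally different from how this paper handles analogous statements (e.g.\ Theorem~\ref{t:bounded}), where explicit zero-dimensional topologies are built on quotient semigroups: you instead funnel every contradiction through the single fact that centers of $\mathsf{T_{\!z}S}$-closed semigroups are nonsingular, letting Ramsey-type combinatorics on idempotents and divisibility do all the work. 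That buys a proof readable without any topology beyond the cited Theorem~\ref{t:Z}(1), at the cost of a longer chain of quotient-and-extract arguments.
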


The following theorem gives a partial answer to Problem~\ref{prob} for the class $\C=\mathsf{T_{\!1}S}$.

\begin{theorem} If a semigroup $X$ is absolutely $\mathsf{T_{\!1}S}$-closed (and $Z$-viable), then the set $Z(X)\cap\korin{\IN}{V\!E(X)}$ is finite (and the semigroup $Z(X)$  is absolutely $\mathsf{T_{\!1}S}$-closed).
\end{theorem}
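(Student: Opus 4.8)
The plan is to split the set $Z(X)\cap\korin{\IN}{V\!E(X)}$ according to whether an element lies in the Clifford part $H(X)$, and then to deduce the parenthetical assertion from the first one. Throughout I use that an absolutely $\mathsf{T_{\!1}S}$-closed semigroup is, by the implications collected in the Introduction, simultaneously $\mathsf{T_{\!z}S}$-closed, ideally $\mathsf{T_{\!z}S}$-closed, absolutely $\mathsf{T_{\!z}S}$-closed and (by Theorem~\ref{t:discrete}(2)) projectively $\mathsf{T_{\!1}S}$-discrete. Consequently Theorem~\ref{t:Z}(1) gives that $Z(X)$ is chain-finite and periodic, Theorem~\ref{t:ideal} gives that $Z(X)\cap\korin{\IN}{V\!E(X)}\setminus H(X)$ is finite, and Theorem~\ref{t:G}(2) gives that $Z(H_e)$ is finite for every viable idempotent $e$.

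The heart of the argument is to prove that the set $E\defeq Z(X)\cap V\!E(X)$ of central viable idempotents is finite. For a central viable idempotent $e$ the coideal $\frac{H_e}{e}=\{x\in X:ex\in H_e\}$ is in fact a subsemigroup: using centrality of $e$ one has $(ex)(ey)=e(xy)$, so if $ex,ey\in H_e$ then $e(xy)\in H_e$. Hence the map $\pi_e\colon X\to H_e\cup\{0\}$ sending $x\mapsto ex$ for $x\in\frac{H_e}{e}$ and $x\mapsto 0$ for $x$ in the ideal $X\setminus\frac{H_e}{e}$ is a homomorphism to the group-with-zero $H_e\cup\{0\}$. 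Assuming $E$ infinite, chain-finiteness of $Z(X)$ together with Ramsey's theorem applied to the comparability relation yields an infinite antichain $\{e_n:n\in\IN\}\subseteq E$ in the semilattice order. I would then form $\Pi\colon X\to Y\defeq\prod_{n\in\IN}(H_{e_n}\cup\{0\})$, $\Pi(x)=(\pi_{e_n}(x))_{n}$, where each factor carries the discrete topology and $Y$ the product topology, so that $Y\in\mathsf{T_{\!z}S}\subseteq\mathsf{T_{\!1}S}$. Since $e_ne_m=e_n$ exactly when $e_n\le e_m$, for an antichain one computes $\pi_{e_n}(e_m)=e_n$ if $n=m$ and $0$ otherwise; thus the distinct points $\Pi(e_m)$ have singleton support and converge to the constant tuple $\mathbf 0$. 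If $\mathbf 0\notin\Pi[X]$ this contradicts $\mathsf{T_{\!1}S}$-closedness (the image fails to be closed), while if $\mathbf 0\in\Pi[X]$ then $\mathbf 0$ is a non-isolated point of $\Pi[X]$, contradicting projective $\mathsf{T_{\!1}S}$-discreteness. Either way $E$ must be finite.

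Granting finiteness of $E$, the first assertion follows quickly: any $x\in Z(X)\cap\korin{\IN}{V\!E(X)}\cap H(X)$ lies in some maximal subgroup $H_f$ with $x^k=f\in E(X)$, whence $f=x^k\in Z(X)\cap V\!E(X)=E$ and $x\in Z(X)\cap H_f\subseteq Z(H_f)$. Therefore $Z(X)\cap\korin{\IN}{V\!E(X)}\cap H(X)\subseteq\bigcup_{e\in E}Z(H_e)$ is a finite union of finite sets (Theorem~\ref{t:G}(2)), and combining this with the finiteness of $Z(X)\cap\korin{\IN}{V\!E(X)}\setminus H(X)$ from Theorem~\ref{t:ideal} shows that $Z(X)\cap\korin{\IN}{V\!E(X)}$ is finite.

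For the parenthetical statement, assume in addition that $X$ is $Z$-viable, i.e.\ $Z(X)\cap E(X)\subseteq V\!E(X)$. Since $Z(X)$ is periodic, every $x\in Z(X)$ satisfies $x^n\in E(X)$ for some $n$; as $x^n\in Z(X)$ too, we get $x^n\in Z(X)\cap E(X)\subseteq V\!E(X)$, so $x\in\korin{\IN}{V\!E(X)}$. Hence $Z(X)=Z(X)\cap\korin{\IN}{V\!E(X)}$ is finite by the first part, and a finite commutative semigroup is absolutely $\mathsf{T_{\!1}S}$-closed by Theorem~\ref{t:main1}. I expect the homomorphism construction of the second paragraph — verifying that $\pi_e$ is a well-defined homomorphism and arranging, via the antichain, that $\Pi$ detects either non-closedness or non-discreteness — to be the main obstacle; everything else is bookkeeping with the quoted theorems.
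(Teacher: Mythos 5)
Your argument is correct, and all the auxiliary facts you invoke (Theorem~\ref{t:discrete}(2), Theorem~\ref{t:Z}(1), Theorem~\ref{t:ideal}, Theorem~\ref{t:G}(2)) are applied within their hypotheses, since absolute $\mathsf{T_{\!1}S}$-closedness does imply all the weaker closedness properties you list. The overall skeleton coincides with the paper's: split $Z(X)\cap\korin{\IN}{V\!E(X)}$ into its Clifford and non-Clifford parts, handle the latter by Theorem~\ref{t:ideal}, reduce the former to finitely many finite groups, and then use periodicity plus $Z$-viability to get $Z(X)=Z(X)\cap\korin{\IN}{V\!E(X)}$. Where you genuinely diverge is in the key finiteness step for idempotents. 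The paper (via the chain $(2)\Ra(3)\Ra(4)\Ra(5)$ of Theorem~\ref{t:mainT1} and Lemma~\ref{l:pd=>fin}) proves the stronger statement that the whole semilattice reflection $X/_{\Updownarrow}$, and hence all of $V\!E(X)$, is finite: it embeds $X/_{\Updownarrow}$ into the compact semilattice $\two^{H}$ by the separating family of $\two$-valued homomorphisms and observes that a closed discrete subset of a compact space is finite. You instead prove only that the set of \emph{central} viable idempotents is finite, by extracting an infinite antichain from the chain-finite poset $E(Z(X))$ and mapping $X$ into the product $\prod_n(H_{e_n}\cup\{0\})$ of discrete $0$-groups via the homomorphisms $\pi_{e_n}$ (well defined precisely because each $e_n$ is viable, so $X\setminus\frac{H_{e_n}}{e_n}$ is an ideal); the images $\Pi(e_m)$ then accumulate at $\mathbf 0$, contradicting either closedness or discreteness of the image. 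Your weaker conclusion suffices because only central idempotents arise as powers of elements of $Z(X)\cap H(X)$, and your construction is closer in spirit to the product arguments the paper uses in Sections~\ref{s:bounded}--\ref{s:Acb} than to Lemma~\ref{l:pd=>fin}; the paper's route buys the stronger fact that $V\!E(X)$ itself is finite (needed elsewhere, e.g.\ in Theorem~\ref{t:order}(4)), while yours is more self-contained and avoids the detour through the semilattice reflection. One cosmetic remark: finiteness of each $Z(X)\cap H_e$ also follows directly from group-finiteness of $Z(X)$ (Theorem~\ref{t:Z}(2)), which is how the paper argues, so the appeal to Theorem~\ref{t:G}(2) is not essential.
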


For classes $\C$ with $\mathsf{T_{\!z}S}\subseteq\C\subseteq\mathsf{T_{\!2}S}$ a partial answer to Problem~\ref{prob} looks as follows.

\begin{theorem}\label{t:subsem} Let $X$ be an absolutely $\mathsf{T_{\!z}S}$-closed semigroup and $A\subseteq  V\!E(X)$. Assume that for any infinite countable subset $B\subseteq A$ and the subsemigroup $C\defeq\bigcap_{e\in B}\frac{H_e}e$ of $X$, one of the following conditions is satisfied:
\begin{enumerate}
\item for every $e\in B$ the subsemigroup $Ce$ of $H_e$ is commutative;
\item $C$ is countable;
\item $|C|\le\cov(\M)$, and for every $e\in A$ the subsemigroup $Ce$ of $H_e$ is countable.
\item $|C|\le\mathfrak c$ and for every $e\in A$ the subsemigroup $Ce$ of $H_e$ is bounded.
\end{enumerate}
Then the set $Z(X)\cap\korin{\IN}{\!A}$ is bounded, and every subsemigroup of $S\subseteq Z(X)\cap\korin{\IN}{\!A}$ of $X$ is  absolutely $\mathsf{T_{\!2}S}$-closed.
\end{theorem}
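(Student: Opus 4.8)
The plan is to prove the two assertions separately: the boundedness of $Z(X)\cap\korin{\IN}{\!A}$ carries essentially all of the difficulty, while the absolute $\mathsf{T_{\!2}S}$-closedness of the subsemigroups $S$ follows formally from boundedness together with the structure theorems already quoted. Concretely, since $S\subseteq Z(X)$ the semigroup $S$ is commutative, so by the implication $(4)\Ra(1)$ of Theorem~\ref{t:main2} it suffices to verify that $S$ is chain-finite, bounded, group-finite and Clifford+finite. I would check these as follows. \emph{Chain-finite}: $X$ is absolutely $\mathsf{T_{\!z}S}$-closed, hence $\mathsf{T_{\!z}S}$-closed, so $Z(X)$ is chain-finite by Theorem~\ref{t:Z}(1), and this property is inherited by the subsemigroup $S$. \emph{Group-finite}: a subgroup $G\le S$ has identity $f\in E(X)\cap\korin{\IN}{\!A}$, whence $f=f^k\in A\subseteq V\!E(X)$; as $G\subseteq Z(X)\cap H_f\subseteq Z(H_f)$ and $Z(H_f)$ is finite by Theorem~\ref{t:G}(2), $G$ is finite. \emph{Clifford+finite}: since absolute $\mathsf{T_{\!z}S}$-closedness implies ideal $\mathsf{T_{\!z}S}$-closedness, Theorem~\ref{t:ideal} gives that $Z(X)\cap\korin{\IN}{V\!E(X)}\setminus H(X)$ is finite, so $S\setminus H(X)$ is finite; and any $x\in S\cap H(X)$ has idempotent power $e=x^k\in A$ and lies in the finite group $Z(H_e)$, hence has finite order, so $\langle x\rangle\subseteq S$ witnesses $x\in H(S)$, giving $S\setminus H(S)\subseteq S\setminus H(X)$ finite. \emph{Bounded} is inherited from the boundedness of $Z(X)\cap\korin{\IN}{\!A}$. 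Note that this reduction uses none of the conditions (1)--(4); these are needed only for boundedness.

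The heart is therefore to prove that $Z(X)\cap\korin{\IN}{\!A}$ is bounded, and I would argue by contradiction. By Theorem~\ref{t:ideal} the non-Clifford part $Z(X)\cap\korin{\IN}{\!A}\setminus H(X)$ is finite, and each of its elements is individually bounded; so if the whole set were unbounded, the Clifford part $Z(X)\cap\korin{\IN}{\!A}\cap H(X)$ would already be unbounded. Each of its elements $x$ lies in a maximal subgroup $H_e$ with $e=x^k\in A\cap Z(X)$ a central viable idempotent and $x\in Z(H_e)$, a finite group by Theorem~\ref{t:G}(2); thus the order of $x$ is at most $|Z(H_e)|$. Since each $Z(H_e)$ is finite, unboundedness of the orders forces infinitely many distinct such idempotents $e_n$ together with elements $x_n\in Z(X)\cap H_{e_n}$ whose orders tend to infinity. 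Setting $B\defeq\{e_n:n\in\IN\}$, a countable infinite subset of $A$, and $C\defeq\bigcap_{e\in B}\frac{H_e}e$, which is both a subsemigroup and a coideal of $X$ (its complement $\bigcup_{e\in B}(X\setminus\frac{H_e}e)$ is a union of ideals, hence an ideal), we are exactly in the situation governed by the hypotheses, so one of (1)--(4) holds for this $B$ and $C$.

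The final and hardest step will be to convert this unbounded data into a violation of absolute $\mathsf{T_{\!z}S}$-closedness. Using the viability of each $e\in B$, the assignment $x\mapsto xe$ extends to a homomorphism $\pi_e:X\to H_e\sqcup\{0\}$ that kills the ideal $X\setminus\frac{H_e}e$, and these assemble into a homomorphism $\Psi=(\pi_e)_{e\in B}:X\to\prod_{e\in B}(H_e\sqcup\{0\})$ whose restriction to $C$ records the product $\prod_{e\in B}Ce$. The plan is to equip a suitable subobject or quotient of this product with a \emph{zero-dimensional Hausdorff} semigroup topology in which the images of the $x_n$ converge to a point lying outside $\Psi[X]$; composing with $\Psi$ then produces a continuous homomorphism from $X$ into a member of $\mathsf{T_{\!z}S}$ with non-closed image, the desired contradiction. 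This is precisely where the four conditions enter, each supplying a different route to the required topology: commutativity of the groups $Ce$ in (1) permits a Pontryagin-style construction via characters into compact zero-dimensional (profinite) groups; countability of $C$ in (2) permits a direct diagonal construction of the convergent sequence; the bound $|C|\le\cov(\M)$ with countable $Ce$ in (3) enables a Baire-category choice of a separating homomorphism into a Polish zero-dimensional group, avoiding the fewer than $\cov(\M)$ many meager obstructions; and $|C|\le\mathfrak c$ with bounded $Ce$ in (4) allows an embedding into a power of a finite cyclic group of weight $\le\mathfrak c$.

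I expect the main obstacle to be exactly this last step. The natural unbounded sequence $(x_n)$ is spread across the distinct subgroups $H_{e_n}$ and does \emph{not} lie in $C$ (for $m\neq n$ one has $x_n e_m\notin H_{e_m}$, so $\pi_{e_m}(x_n)=0$), so the product topology with discrete factors merely forces $\Psi(x_n)\to\bar 0$, a limit typically already in $\Psi[X]$; producing instead a \emph{missing} limit of ``infinite order'' requires replacing the discrete factors by the character-, category-, or embedding-based compact structures selected by whichever of (1)--(4) is assumed. The delicate points will be to verify that the resulting target is simultaneously a topological semigroup, zero-dimensional and Tychonoff, that the homomorphism is genuinely defined on all of $X$ (which the coideal structure and the maps $\pi_e$ guarantee), and that the chosen sequence converges to a point provably outside the image; establishing boundedness this way and then feeding it into the first paragraph completes the proof.
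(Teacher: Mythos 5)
Your first paragraph (deducing the absolute $\mathsf{T_{\!2}S}$-closedness of $S$ from boundedness via chain-finiteness, group-finiteness and Clifford+finiteness, using Theorems~\ref{t:Z}, \ref{t:G}, \ref{t:ideal} and the implication proved in Lemma~\ref{l:chain-closed2}) is correct and is essentially the paper's argument. Your reduction of unboundedness to the existence of infinitely many distinct idempotents $e_n\in A\cap Z(X)$ carrying central elements $x_n\in Z(X)\cap H_{e_n}$ of unbounded order is also sound (the paper additionally passes to an antichain via Ramsey's theorem, which your construction will need as well, since the later topology requires $e_ne_m$ to fall into the discarded ideal for $n\ne m$). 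But from that point on the proposal is a plan rather than a proof, and the missing step is precisely the hard core of the theorem. Concretely: the hypotheses (1)--(4) constrain the semigroups $C=\bigcap_{e\in B}\frac{H_e}{e}$ and $Ce\subseteq H_e$, whereas your unbounded witnesses $x_n$ live in $Z(X)\cap H_{e_n}$ and need not lie in $C$, nor be expressible in terms of $C$; you never establish any link between the two. The paper bridges this gap with the notion of $B$-centroboundedness (about elements $(xe)(ye)^{-1}\in H_e\cap Z(X)$ with $x,y\in C$) and a two-stage argument: Theorem~\ref{t:bounded} shows, via an ultrafilter-based construction of a zero-dimensional Hausdorff semigroup $T\supseteq X/I$ containing adjoined limits $yz_{\F}$ and neighborhoods of $0$ built from the sets $A(n,k)=\bigcup_{2\le p\le\lfloor 2^n/k\rfloor}z_n^pC$, that unboundedness of $Z(X)\cap H(X)\cap\korin{\IN}{\!A}$ forces a failure of centroboundedness over some countable antichain; Theorem~\ref{t:=>Acb} then verifies centroboundedness under each of (1)--(4), mainly by reducing to polyboundedness/polyfiniteness of the image $h[C]\subseteq\prod_{e\in B}H_e$ (using Theorem~\ref{t:polyb}) and, in case (4), a Cantor-scheme argument in the complete metric group $\prod_n H_{e_n}$. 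Nothing in your sketch substitutes for either stage.

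Moreover, the per-case routes you propose are doubtful as stated. A compactified target $\prod_{e\in B}(H_e\sqcup\{0\})$ with non-discrete factors is not obviously a topological semigroup, and you yourself observe that with discrete factors the limit $\bar 0$ already lies in $\Psi[X]$; producing a limit point provably outside the image is exactly what the paper's adjoined ultrafilter points and Claim~\ref{cl:distinct} (which is where centroboundedness is used) accomplish, and you give no replacement. In case (1) the efficient argument is not Pontryagin duality but the observation that $h[C]$ is a commutative projectively $\mathsf{T_{\!z}S}$-closed semigroup, hence by Theorems~\ref{t:Z} and \ref{t:C-closed} a bounded subgroup of $\prod_{e\in B}H_e$, which yields a uniform exponent. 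In case (4) the groups $Ce$ are bounded but need not be abelian, so they do not embed into powers of finite cyclic groups; that route fails outright. So the proposal correctly isolates where the difficulty lies but does not close it: the passage from ``unbounded central elements spread over infinitely many $H_{e_n}$'' to ``a continuous homomorphism of $X$ into a member of $\mathsf{T_{\!z}S}$ with non-closed image'' is missing.
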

The cardinal $\cov(\M)$ appearing in Theorem~\ref{t:subsem}(3) is defined as the smallest cardinality of a cover of the real line by nowhere dense subsets. The Baire Theorem implies that $\w_1\le\cov(\M)\le \mathfrak c$. It is well-known that $\cov(\M)=\mathfrak c$ under Martin's Axiom. By \cite[7.13]{Blass}, the equality $\cov(\M)=\mathfrak c$ is equivalent to Martin's Axiom for countable posets.

By Theorem~\ref{t:Z}(1), the center $Z(X)$ of any $\mathsf{T_{\!z}S}$-closed semigroup is chain-finite. In fact, this is an order property of the poset $E(X)$ endowed with the natural partial order $\le$ defined by $x\le y$ iff $xy=yx=x$. In turns out that stronger closedness properties (like the ideal or projective $\C$-closedness) impose stronger restrictions on the partial order of the set $E(X)$ and also on the partial order of the semilattice reflection $X/_{\Updownarrow}$ of $X$.

A congruence $\approx$ on a semigroup $X$ is called a {\em semilattice congruence} if the quotient semigroup $X/_\approx$ is a {\em semilattice}, i.e., a commutative semigroup of idempotents. The intersection ${\Updownarrow}$ of all semilattice congruences on a semigroup $X$ is called the {\em smallest semilattice congruence} on $X$ and the quotient semigroup $X/_{\Updownarrow}$ is called the {\em semilattice reflection} of $X$. The smallest semilattice congruence was studied in the monographs \cite{BCP,Mitro2003}, surveys \cite{Mitro2004,MS} and papers \cite{Petrich63,Petrich64,Putcha73,PW,Tam56,Tamura82,TS66,Ban}.

A partially ordered set $(P,\le)$ is called
\begin{itemize}
\item {\em chain-finite} if each infinite subset $I\subseteq P$ contains two elements $x,y\in I$ such that $x\not\le y$ and $y\not
\le x$;
\item {\em well-founded} if each nonempty set $A\subseteq P$ contains an element $a\in A$ such that $\{x\in A:x\le a\}=\{a\}$.
\end{itemize}
It is easy to see that for every chain-finite semigroup $X$ the poset $E(X)$ is chain-finite. The converse holds if $E(X)$ is a commutative subsemigroup of $X$.

\begin{theorem}\label{t:order} Let $X$ be a semigroup.
\begin{enumerate}
\item If $X$ is ideally $\mathsf{T_{\!z}S}$-closed, then the posets   $X/_{\Updownarrow}$ and $V\!E(x)$ are well-founded.
\item If $X$ is projectively $\mathsf{T_{\!z}S}$-closed, then  $X/_{\Updownarrow}$ and $V\!E(x)$ are chain-finite.
\item If $X$ is projectively $\mathsf{T_{\!z}S}$-closed and projectively $\mathsf{T_{\!z}S}$-discrete, then  $X/_{\Updownarrow}$ and $V\!E(x)$ are finite;
\item If $X$ is absolutely $\mathsf{T_{\!1}S}$-closed, then  $X/_{\Updownarrow}$ and $V\!E(x)$ are finite.
\end{enumerate}
\end{theorem}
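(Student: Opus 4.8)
The plan is to reduce every assertion to the corresponding statement about the \emph{semilattice reflection} $S:=X/_{\Updownarrow}$, and then to transfer it to $V\!E(X)$ (equipped with the partial order inherited from $E(X)$) through a single order-theoretic bridge: the map $\phi\colon V\!E(X)\to S$, $\phi(e)=[e]_{\Updownarrow}$, which I claim is an \emph{order-preserving injection}. Once this is established, an infinite strictly descending chain, an infinite chain, or merely an infinite set in $V\!E(X)$ is carried to one of the same kind in $S$ (an order-preserving injection of posets sends strict comparabilities to strict comparabilities), so well-foundedness, chain-finiteness and finiteness all pass from $S$ to $V\!E(X)$. Order-preservation is immediate: if $e\le f$ in $E(X)$, i.e. $ef=fe=e$, then $[e][f]=[e]$ in the semilattice $S$, whence $[e]\le[f]$.

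For injectivity, suppose $e,f\in V\!E(X)$ satisfy $[e]=[f]$. Because for idempotents the smallest semilattice congruence reduces to mutual divisibility, we get $e\in X^1fX^1$ and $f\in X^1eX^1$, where $X^1$ denotes $X$ with an identity adjoined. From $e\in X^1fX^1$ write $e=cfd$; viability of $e$ means that $I_e:=X\setminus\frac{H_e}{e}$ is an ideal, and as $e\in\frac{H_e}{e}$ we have $e\notin I_e$, so $f\in I_e$ would give $e=cfd\in I_e$, a contradiction. Hence $f\in\frac{H_e}{e}$ and $ef=fe\in H_e$. Symmetrically, from $f\in X^1eX^1$ and viability of $f$ we obtain $ef=fe\in H_f$. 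Thus $g:=ef=fe$ lies in $H_e\cap H_f$, and the computation $g^2=e(fe)f=e(ef)f=ef=g$ (using $ef=fe$) shows $g$ is idempotent. An idempotent of the group $H_e$ equals its identity $e$, and an idempotent of $H_f$ equals $f$; therefore $e=g=f$. This proves $\phi$ injective, so it suffices to prove (1)--(4) for $S$.

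For (2)--(4) I would exploit that $S$ is a quotient of $X$ and a commutative semigroup (indeed a semilattice). If $X$ is projectively $\mathsf{T_{\!z}S}$-closed then $S$ is $\mathsf{T_{\!z}S}$-closed, so by Theorem~\ref{t:C-closed} $S$ is chain-finite as a semigroup; since in a semilattice $xy\in\{x,y\}$ exactly when $x,y$ are comparable, this is precisely chain-finiteness of the poset $S$, giving (2). If moreover $X$ is projectively $\mathsf{T_{\!z}S}$-discrete, then $S$ is projectively $\mathsf{T_{\!z}S}$-closed and projectively $\mathsf{T_{\!z}S}$-discrete (any homomorphism out of $S$ composed with $X\to S$ is a homomorphism out of $X$), so the equivalence $(3)\Leftrightarrow(4)$ of Theorem~\ref{t:main1}, applied to the commutative $S$, makes $S$ finite, giving (3). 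Finally, if $X$ is absolutely $\mathsf{T_{\!1}S}$-closed, Theorem~\ref{t:discrete}(2) makes it projectively $\mathsf{T_{\!1}S}$-closed and projectively $\mathsf{T_{\!1}S}$-discrete, hence a fortiori projectively $\mathsf{T_{\!z}S}$-closed and projectively $\mathsf{T_{\!z}S}$-discrete, so (3) yields finiteness of $S$, giving (4).

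The remaining, and hardest, point is (1): that ideal $\mathsf{T_{\!z}S}$-closedness forces $S$ to be well-founded. Here the soft argument above breaks down, because the semilattice congruence is not a Rees congruence, so ideal closedness of $X$ does \emph{not} make $S$ itself $\mathsf{T_{\!z}S}$-closed. Arguing by contradiction from a strictly descending chain $s_1>s_2>\cdots$ in $S$, I would form the down-set $I_S=\{s\in S: s\not\ge s_k\text{ for all }k\}$, which is an ideal of $S$, pull it back to the ideal $I=\pi^{-1}(I_S)$ of $X$, and pass to the Rees quotient $X/I$, in which the classes $[x_k]$ of chosen preimages of $s_k$ form a strictly descending sequence of nonzero elements whose only lower bound is the zero. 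The goal is to show $X/I$ is not $\mathsf{T_{\!z}S}$-closed by giving $X/I$ the discrete topology and adjoining a single new limit point $\infty=\lim_k[x_k]$, extending the multiplication so that $(X/I)\cup\{\infty\}$ is a zero-dimensional Hausdorff topological semigroup in which $X/I$ is dense but not closed, contradicting ideal $\mathsf{T_{\!z}S}$-closedness. The main obstacle is precisely the verification that the multiplication extends continuously to $\infty$: using the choice of $I$ one must check that for every $y$ the products $[x_ky]$ either stabilize or tend to $\infty$, so that $\infty\cdot[y]$ and $[y]\cdot\infty$ are well defined and jointly continuous. This is the genuine technical core of part (1) and parallels the extension constructions used to establish Theorem~\ref{t:mainP} in \cite{BB}.
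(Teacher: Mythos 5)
Your overall architecture --- transfer everything from the semilattice reflection $S=X/_{\Updownarrow}$ to $V\!E(X)$ via an injective order-preserving map, and prove (2) by applying Theorem~\ref{t:C-closed} to $S$ --- is exactly the paper's (its Proposition on the injectivity of $q{\restriction}_{V\!E(X)}$ plus Lemma~\ref{l:pc=>cf}). Your injectivity argument reaches the same key fact ($ef=fe\in H_e\cap H_f$, hence $e=f$) by a different route: you invoke Tamura's description of the smallest semilattice congruence as mutual divisibility, which for idempotents is a nontrivial external fact needing a citation; the paper gets $f\in\frac{H_e}{e}$ directly from the observation that the characteristic function of the prime coideal $\frac{H_e}{e}$ is a homomorphism to $\two$ and therefore factors through $q$. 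That part is acceptable. The two remaining parts have genuine gaps.

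For (3), your argument is circular. You deduce finiteness of $S$ from the equivalence $(3)\Leftrightarrow(4)$ of Theorem~\ref{t:main1}, but that equivalence is itself proved (in Section~\ref{s:T1}, via Theorem~\ref{t:mainT1}, implication $(4)\Ra(5)$) by quoting precisely the statement you are trying to establish, namely Lemma~\ref{l:pd=>fin} $=$ Theorem~\ref{t:order}(3). What is needed, and what the paper supplies, is a direct argument for semilattices: embed $S$ into the compact topological semilattice $\two^{H}$, where $H$ is the set of all homomorphisms $S\to\two$ (these separate points); projective $\mathsf{T_{\!z}S}$-closedness makes the image closed and projective $\mathsf{T_{\!z}S}$-discreteness makes it discrete, and a closed discrete subset of a compact space is finite. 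Note also that chain-finiteness alone cannot substitute here: an infinite antichain with a zero adjoined is a chain-finite semilattice, so the compactness argument is really doing work.

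For (1), you correctly identify the ideal $I=q^{-1}[I_S]$ (the same ideal the paper uses, since $X\setminus I=\bigcup_n q^{-1}[{\uparrow}y_n]$), but the construction you sketch --- adjoin a single point $\infty$ declared to be the limit of the \emph{sequence} $([x_k])_{k}$ of chosen preimages --- fails at exactly the point you flag and do not resolve. If $q(y)\ge s_{k_0}$, then $q(x_ky)=s_k$ for all $k\ge k_0$, so the products $[x_ky]$ neither stabilize nor enter the neighborhoods $\{\infty\}\cup\{[x_k]:k\ge n\}$, and translation by $[y]$ is discontinuous at $\infty$. The paper's construction avoids this by making the new point a limit of the whole tail \emph{coideals}: with $P_n=q^{-1}[{\uparrow}y_n]$ and $P=\bigcup_nP_n$, the basic neighborhoods of the new point are $\{P\}\cup(P\setminus P_n)$, and the product of the new point with any element of $P$ is again the new point. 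Continuity then follows because each $X\setminus P_n$ is an ideal, so $(P\setminus P_n)x\subseteq P\setminus P_n$ for $x\in P$; no such closure property is available for your single sequence $(x_k)$. As written, the technical core of part (1) is missing.
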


Theorem~\ref{t:order} will be proved in Section~\ref{s:order}. In Section~\ref{s:T1} we prove a general version of Theorem~\ref{t:main1} and in Section~\ref{s:if} we prove Lemma~\ref{l:chain-closed2} giving a sufficient condition of the absolute $\mathsf{T_{\!2}S}$-closedness. In Section~\ref{s:bounded} we introduce the notion of an $A$-centrobounded semigroup and use this notion for characterizing bounded set of form $Z(X)\cap\korin{\IN}{\!A}$ in absolutely $\mathsf{T_{\!z}S}$-closed semigroups. In Section~\ref{s:Acb} we prove Theorem~\ref{t:=>Acb} giving some sufficient conditions of the $A$-centroboundedness and implying Corollary~\ref{c:subsem}, which is a more general version of Theorem~\ref{t:subsem}. In Sections~\ref{s:ideal} and \ref{s:final} we prove Theorems~\ref{t:ac=>ideal} and \ref{t:main2}, respectively.

\section{Preliminaries}

In this section we collect some auxiliary results and notions that will be used in the remaining part of the paper.

We denote by $\w$ the set of finite ordinals and by $\IN\defeq \w\setminus\{0\}$ the set of positive integers. Each ordinal $n\in\w$ is identified with the set $\{k:k<n\}$ of smaller ordinals.

\subsection{Partially ordered sets} A {\em poset} is a set endowed with a partial order $\le$. For an element $p$ of a poset $P$, let
$${\downarrow}p\defeq\{x\in P:x\le p\}\quad\mbox{and}\quad{\uparrow}p\defeq\{x\in P:p\le x\}$$be the {\em lower} and {\em upper sets} of $p$ in $P$, respectively.

\subsection{Cardinal characteristics of the continuum}

Let
\begin{itemize}
\item $\cov(\M)$ be the smallest cardinality of a cover of the real line by nowhere dense sets,
\item $\cov(\mathcal N)$ be the smallest cardinality of a cover of the real line by sets of Lebesgue measure zero,
\item  $\cov(\overline{\mathcal N})$ be the smallest cardinality of a cover of the real line by closed subsets of Lebesgue measure zero, and
\item $\mathfrak d$ be the smallest cardinality of a cover of the Baire space $\w^\w$ by compact sets.
\end{itemize}

By \cite[4.1]{BS}, $$\max\{\cov(\M),\cov(\mathcal N)\}\le\cov(\overline{\mathcal N})\le\max\{\cov(\mathcal N),\mathfrak d\}\le\mathfrak c.$$ Martin's Axiom implies that $\mathfrak d=\cov(\M)=\cov(\mathcal N)=\cov(\overline{\mathcal N})=\mathfrak c$, see \cite[\S7]{Blass}. By Theorem 7.13 in \cite{Blass}, the equality $\cov(\M)=\mathfrak c$ is equivalent to the Martin's Axiom for countable posets. By \cite[5.6]{BS} and \cite[11.5]{Blass}, the strict inequalities $\max\{\cov(\M),\cov(\mathcal N)\}<\cov(\overline{\mathcal N})$ and $\max\{\cov(\mathcal N),\mathfrak d\}<\mathfrak c$ are consistent.

\subsection{Semigroup topologies and subinvariant metrics on semigroups}

A topology $\tau$ on a semigroup $X$ is called a {\em semigroup topology} if $(X,\tau)$ is a topological semigroup.

A metric $d$ on a semigroup $X$ is {\em subinvariant} if for every $x,y,a\in X$ and  we have $$d(ax,ay)\le d(x,y)\quad\mbox{and}\quad d(xa,ya)\le d(x,y).$$

It is easy to see that every subinvariant metric on a semigroup generates a semigroup topology.

\subsection{Zero-closed semigroups}

For a semigroup $X$ its
\begin{itemize}
\item {\em $0$-extension} is the semigroup $X^0=X\cup\{0\}$ where $0\notin X$ is any element such that $0x=0=x0$ for every $x\in X^0$;
\item {\em $1$-extension} is the semigroup $X^1=X\cup\{1\}$ where $1\notin X$ is any element such that $1x=x=x1$ for every  $x\in X^1$.
\end{itemize}

Following \cite{BB2}, we call a semigroup $X$ {\em zero-closed} if $X$ is closed in its $0$-extension $X^0=\{0\}\cup X$ endowed with any Hausdorff semigroup topology.

A topological semigroup $X$ is called {\em $0$-discrete} if $X$ contains a unique non-isolated point $0\in X$ such that $x0=0=0x$ for all $x\in X$. It is easy to see that every $0$-discrete $T_1$ topological semigroup is zero-dimensional.

\begin{lemma}\label{l:c=>zc} Let $\C$ be a class of topological semigroups containing all $0$-discrete semigroups. Every $\C$-closed semigroup $X$ is zero-closed.
\end{lemma}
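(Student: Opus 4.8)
The plan is to argue by contradiction, converting a witnessing semigroup topology on the $0$-extension into a $0$-discrete one, which then lies in $\C$ by hypothesis. Suppose $X$ is not zero-closed. Then there is a Hausdorff semigroup topology $\tau$ on $X^0=\{0\}\cup X$ in which $X$ is not closed. Since $X^0\setminus X=\{0\}$, this means precisely that $0\in\overline X^\tau\setminus X$; in particular $0$ is not $\tau$-isolated, for if $\{0\}$ were $\tau$-open it would witness $0\notin\overline X^\tau$.

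Next I would refine $\tau$ to a topology $\sigma$ on $X^0$ that isolates every point of $X$ while retaining the same neighborhood filter at $0$: declare $U\subseteq X^0$ to be $\sigma$-open iff either $0\notin U$, or $U$ is a $\tau$-neighborhood of $0$. A routine check shows $\sigma$ is a topology finer than $\tau$, that each $x\in X$ is $\sigma$-isolated, and that $0$ remains non-isolated (since $\{0\}$ is not a $\tau$-neighborhood of $0$). Thus $0$ is the unique non-isolated point of $(X^0,\sigma)$, and it is a two-sided zero by the definition of the $0$-extension.

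The main work is to verify that $\sigma$ is a semigroup topology, i.e.\ that multiplication $m\colon X^0\times X^0\to X^0$ is $\sigma$-continuous. At a pair $(a,b)$ with $ab\in X$ one necessarily has $a,b\in X$, so $\{a\}\times\{b\}$ is a $\sigma$-neighborhood mapped onto the $\sigma$-open singleton $\{ab\}$, and continuity is immediate. The only delicate case is $ab=0$: here, for any $\tau$-neighborhood $W$ of $0$, one invokes the $\tau$-continuity of $m$ at $(a,b)$ to obtain $\tau$-open sets $U\ni a$ and $V\ni b$ with $m(U\times V)\subseteq W$; replacing any coordinate lying in $X$ by the corresponding (now $\sigma$-open) singleton and keeping the $\tau$-open neighborhood of $0$ (which is $\sigma$-open) produces a $\sigma$-neighborhood of $(a,b)$ mapped into $W$. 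This handles the subcases $a=b=0$, $a=0\ne b$, and $a\ne 0=b$ uniformly, while $a,b\in X$ cannot yield $ab=0$ because $X$ is a subsemigroup of $X^0$. Hence $\sigma$ is a semigroup topology and $(X^0,\sigma)$ is a $0$-discrete semigroup, so $(X^0,\sigma)\in\C$.

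Finally, the inclusion $h\colon X\hookrightarrow(X^0,\sigma)$ is an isomorphic topological embedding, since its image carries the discrete subspace topology (every point of $X$ being $\sigma$-isolated), yet $h[X]=X$ is not closed in $(X^0,\sigma)$ because $0\in\overline X^\sigma\setminus X$. This contradicts the $\C$-closedness of $X$, so $X$ must be zero-closed. I expect the verification that $\sigma$ is a semigroup topology---specifically the transfer of continuity from $\tau$ at the points where the product equals $0$---to be the only nonroutine step.
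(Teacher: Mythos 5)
Your proposal is correct and follows essentially the same route as the paper: the paper defines the refined topology $\tau^0$ via the base $\{\{x\}:x\in X\}\cup\tau$, which is exactly your $\sigma$, and likewise observes that the resulting $0$-discrete semigroup witnesses failure of $\C$-closedness. You merely spell out the continuity check at products equal to $0$ that the paper leaves as an observation.
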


\begin{proof} Assuming that $X$ is not zero-closed, we can find a Hausdorff semigroup topology $\tau$ on $X^0$ such that $X$ is not closed in the topological space $(X^0,\tau)$. Consider the topology $\tau^0$ on $X^0$, generated by the base $\big\{\{x\}:x\in X\big\}\cup\tau$, and observe that $(X^0,\tau^0)$ is a $0$-discrete Hausdorff topological semigroup containing $X$ as a non-closed subsemigroup and witnessing that $X$ is not $\C$-closed.
\end{proof}

\subsection{Polybounded and polyfinite semigroups}

A {\em semigroup polynomial} on a semigroup $X$ is a function $f:X\to X$ of the form $f(x)=a_0xa_1x\cdots xa_n$ for some $n\in\IN$  and some elements $a_0,\dots, a_n\in X^1$. The number $n$ is called the {\em degree} of the polynomial $f$ and is denoted by $\deg(f)$.

A semigroup $X$ is called {\em $\kappa$-polybounded} for a cardinal $\kappa$ if $X=\bigcup_{\alpha\in\kappa}f_\alpha^{-1}(b_\alpha)$ for some elements $b_\alpha\in X$ and semigroup polynomials $f_\alpha$ on $X$. A semigroup $X$ is {\em polybounded} if $X$ is $n$-polybounded for some $n\in\IN$.

 Polybounded semigroups were introduced in \cite{BB2}, where it was proved that  countable zero-closed semigroups are polybounded and polybounded groups are absolutely $\mathsf{T_{\!1}S}$-closed.

A semigroup $X$ is called {\em polyfinite} if there exist $d\in\IN$ and a finite set $F\subseteq X$ such that for any $x,y\in X$ there exists a semigroup polynomial $f:X\to X$ of degree $\le d$ such that $\{f(x),f(y)\}\subseteq F$.

\begin{lemma}\label{l:polyb=>polyf} Every polybounded semigroup $X$ is polyfinite.
\end{lemma}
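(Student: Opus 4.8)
The plan is to start from a finite representation $X=\bigcup_{i=1}^n f_i^{-1}(b_i)$ witnessing that $X$ is $n$-polybounded, and to produce the data $(d,F)$ required by polyfiniteness by \emph{composing} two of the polynomials $f_i$. First I would record the two structural facts about semigroup polynomials that make everything work: (i) a composition $g\circ h$ of semigroup polynomials is again a semigroup polynomial, of degree $\deg(g)\cdot\deg(h)$ — expand $g(h(t))=c_0\,h(t)\,c_1\cdots h(t)\,c_m$ and absorb all adjacent constants into $X^1$, which is a monoid; and (ii) if $h(x)=b$, then $(g\circ h)(x)=g(b)$ depends only on $g$ and $b$, not on $x$. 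Both are routine, but they are the backbone of the argument.

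Next I would fix the candidate data $d:=(\max_{1\le i\le n}\deg f_i)^2$ and the finite set $F:=\{b_1,\dots,b_n\}\cup\{f_k(b_i):1\le i,k\le n\}$. Given arbitrary $x,y\in X$, I would first use the cover to pick $i\le n$ with $f_i(x)=b_i$. The key point — and the step I expect to be the only genuine obstacle — is that knowing $f_i(x)=b_i$ pins down the value only at $x$ and says nothing about $f_i(y)$, so no single polynomial $f_i$ can be expected to send both $x$ and $y$ into a fixed finite set (already visible in the degree-one case $f_i(t)=a_itc_i$, where the left multiplier must sit immediately to the left of the variable and cannot serve $x$ and $y$ at once). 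The device that resolves this is to apply the cover a \emph{second} time, now to the element $f_i(y)\in X$: choose $k\le n$ with $f_k(f_i(y))=b_k$.

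Finally I would take $f:=f_k\circ f_i$. By (i) it is a semigroup polynomial of degree $\deg(f_k)\deg(f_i)\le d$. The choice of $k$ gives $f(y)=f_k(f_i(y))=b_k\in F$, while (ii) together with $f_i(x)=b_i$ gives $f(x)=f_k(b_i)\in F$. Hence $\{f(x),f(y)\}\subseteq F$ with $\deg f\le d$, so $(d,F)$ witnesses that $X$ is polyfinite. The whole subtlety is concentrated in this asymmetric use of composition: the inner factor $f_i$ collapses $x$ to the constant $b_i$, the outer factor $f_k$ is tuned to collapse $f_i(y)$ to $b_k$, and the value at $x$ then becomes automatically the fixed element $f_k(b_i)$ — which is precisely why the finite set $F$ must be enlarged to contain all the elements $f_k(b_i)$.
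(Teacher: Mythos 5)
Your proof is correct and follows essentially the same route as the paper: cover $X$ by the sets $f_i^{-1}(b_i)$, pick $i$ with $f_i(x)=b_i$ and then $k$ with $f_k(f_i(y))=b_k$, and use the composition $f_k\circ f_i$ together with the finite set $\{b_i\}\cup\{f_k(b_i)\}$ and the multiplicativity of degree under composition. The paper's argument is word-for-word the same construction (it merely writes the degree bound as $\max_{i,j}\deg(f_i\circ f_j)$ rather than $(\max_i\deg f_i)^2$).
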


\begin{proof} Since $X$ is polybounded, there exist elements $b_0,\dots,b_{n-1}\in X$ and semigroup polynomials $f_0,\dots,f_{n-1}$ on $X$ such that $X=\bigcap_{i\in n}f_i^{-1}(b_i)$. Let $$F=\{b_i\}_{i\in n}\cup\{f_i(b_j):i,j\in n\}\quad\mbox{and}\quad d=\max\{\deg(f_i\circ f_j):i,j\in n\}.$$ Given any elements $x,y\in X$, find $i\in n$ such that $f_i(x)=b_i$ and then find $j\in n$ such that $f_j(f_i(y))=b_j$. The semigroup polynomial $f=f_j\circ f_i:X\to X$ has degree $\le d$ and $\{f(x),f(y)\}=\{f_j(f_i(x)),f_j(f_i(y))\}=\{f_j(b_i),b_j\}\subseteq F$, witnessing that $X$ is polyfinite.
\end{proof}

  The following theorem was proved in \cite{ZeroClosed}.

\begin{theorem}\label{t:polyb} Let $X$ be a zero-closed semigroup. Then
\begin{enumerate}
\item $X$ is $\kappa$-polybounded for some $\kappa<\max\{2,|X|\}$.
\item If $\cov(\M)=\mathfrak c$ and $X$ admits a subinvariant separable complete metric, then $X$ is polybounded.
\item If $\cov(\M)=\mathfrak c$ and $X$ admits a compact Hausdorff semigroup topology, then $X$ is polybounded.
\item If $\cov(\overline{\mathcal N})=\mathfrak c$ and $X$ admits a compact Hausdorff semigroup topology, then $X$ is polyfinite.
\end{enumerate}
\end{theorem}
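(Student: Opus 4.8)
The plan is to reduce every item to a single topological reformulation of zero-closedness and then feed it combinatorial, respectively descriptive-set-theoretic, input. By definition $X$ fails to be zero-closed iff $X^0$ carries a Hausdorff semigroup topology with $0\in\overline X$. I would realize such a topology by keeping $X$ discrete and declaring the sets $\{0\}\cup F$, $F\in\mathcal F$, to be the neighborhoods of $0$ for a suitable filter $\mathcal F$ on $X$. Unwinding the axioms, $(X^0,\tau)$ is a Hausdorff topological semigroup with $0\in\overline X$ exactly when $\mathcal F$ is \emph{free} (this yields both Hausdorffness and $0\in\overline X$, since each $F$ is nonempty) and satisfies, for all $a\in X$ and $F\in\mathcal F$, the continuity conditions (b) there is $F'\in\mathcal F$ with $aF'\cup F'a\subseteq F$, and (c) there is $F'\in\mathcal F$ with $F'F'\subseteq F$. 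Thus, to prove each item I assume the relevant polyboundedness conclusion fails and try to construct a free filter with (b) and (c), contradicting the assumed zero-closedness.

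\emph{Item (1).} Assume $X$ is not $\kappa$-polybounded for any $\kappa<\max\{2,|X|\}$. The natural candidate is the filter $\mathcal F$ generated by all complements $X\setminus f^{-1}(b)$ of semigroup-polynomial fibers. This filter is \emph{proper} precisely because $X$ is not finitely polybounded, and it is \emph{free} because the identity polynomial already separates points. Condition (b) is automatic from the polynomial calculus: for $f(x)=a_0xa_1\cdots xa_n$ the map $g(x)=f(ax)$ is again a polynomial of the same degree, so $a^{-1}(f^{-1}(b))=g^{-1}(b)$ is a fiber whose complement lies in $\mathcal F$ (symmetrically on the right). The crux is condition (c): given a fiber $K=f^{-1}(b)$ I must find $F'\in\mathcal F$ with $F'F'\cap K=\emptyset$, i.e. rectangularly avoid the two-variable set $\{(u,v):f(uv)=b\}$. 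Here I would analyze the word $f(uv)=a_0uva_1\cdots uva_n$ and show that the bad pairs are trapped, in each coordinate, inside a union of fewer than $\max\{2,|X|\}$ fibers; the hypothesis then keeps $\mathcal F$ proper after intersecting with the corresponding complements, delivering the required $F'$. Finite $X$ is disposed of directly: on a finite group the power map $x\mapsto x^{m}$ (with $m$ the exponent) is constant, witnessing $1$-polyboundedness, while in the remaining finite cases a non-injective translation $x\mapsto ax$ already gives a cover by fewer than $|X|$ fibers.

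\emph{Items (2)--(4).} Fix the extra structure on $X$: a subinvariant complete separable metric, or a compact Hausdorff semigroup topology. In either case $|X|\le\mathfrak c$, points are closed, and each polynomial $f$, being a composite of the continuous multiplication, has \emph{closed} fibers $f^{-1}(b)$ (compact in the compact case). Item (1) exhibits $X$ as a union of $\kappa<\mathfrak c$ such closed sets; to finitize this cover I invoke the cardinal hypothesis. Under $\cov(\M)=\mathfrak c$ the underlying Polish, respectively compact, space cannot be covered by $<\mathfrak c$ nowhere-dense sets, so some fiber $K=f^{-1}(b)$ is non-meager and, being closed, has nonempty interior. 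I would then leverage the semigroup structure — translating $K$ by elements of $X$ and using compactness to bound the number of translates needed (a ``large'' fiber behaving like an open subsemigroup of finite index) — to extract a \emph{finite} subcover, i.e. polyboundedness; this handles (2) and (3). For (4) the target is only the weaker polyfinite property (recall polybounded $\Ra$ polyfinite, Lemma~\ref{l:polyb=>polyf}), where one merely assigns to each pair $x,y$ a bounded-degree polynomial into a fixed finite set; the corresponding extraction must avoid closed \emph{null} sets rather than meager sets, which succeeds under the weaker hypothesis $\cov(\overline{\mathcal N})=\mathfrak c$, consistently with $\cov(\M)\le\cov(\overline{\mathcal N})$.

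\emph{Main obstacle.} The hardest step in Item (1) is condition (c): rectangularly avoiding $\{(u,v):f(uv)=b\}$ by a single filter member, i.e. taming the two-variable polynomial $f(uv)$ by one-variable fibers while keeping $\mathcal F$ proper — it is precisely this step that forces the bound $\kappa<\max\{2,|X|\}$ rather than a finite bound. In Items (2)--(4) the delicate part is turning one ``fat'' fiber into a finite subcover using only semigroup homogeneity and compactness, together with identifying the exact covering invariant: $\cov(\M)$ for the category argument yielding polyboundedness versus $\cov(\overline{\mathcal N})$ for the closed-null argument yielding polyfiniteness.
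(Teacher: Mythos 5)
First, note that the paper does not actually prove Theorem~\ref{t:polyb}: it is quoted verbatim from the preprint \cite{ZeroClosed}, so there is no internal proof to compare yours against. Your reduction of zero-closedness to the existence of a free proper filter $\mathcal F$ on $X$ satisfying the translation condition (b) and the square condition (c) is the right starting point (it amounts to Lemma~\ref{l:c=>zc} plus keeping $X$ discrete), and your handling of finite $X$ and of condition (b) is fine. The gap is exactly at the point you yourself call the crux, and nothing you write closes it. For (c) you must find, for a given fiber $K=f^{-1}(b)$, a set $F'\in\mathcal F$ with $F'F'\cap K=\emptyset$. Knowing that each section $\{v:f(uv)=b\}=g_u^{-1}(b)$ is a single polynomial fiber does not help: to make $F'\times F'$ avoid $\{(u,v):f(uv)=b\}$ you would need $F'$ to lie in the complement of $\bigcup_{u\in F'}g_u^{-1}(b)$, a union of $|F'|=|X|$ many fibers, which is not a small set. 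Moreover, the filter you actually define (generated by complements of single fibers) is proper as soon as $X$ is not finitely polybounded, so your argument never uses the full negation of the conclusion (that $X$ is not $\kappa$-polybounded for any $\kappa<|X|$); the enlarged filter needed for (c) is never specified, and its closure under the required operations (e.g.\ under unions of $<|X|$ fibers when $|X|$ is singular) is not addressed. This is where the entire content of item (1) lies, and it is missing.

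Items (2)--(4) have independent problems. The assertion $|X|\le\mathfrak c$ fails for compact Hausdorff semigroups (consider $\{0,1\}^{\kappa}$ with coordinatewise minimum for $\kappa>\mathfrak c$), so item (1) need not produce a cover by fewer than $\mathfrak c$ fibers in cases (3)--(4). More seriously, the step ``some closed fiber is non-meager, hence has interior, and translating it plus compactness yields a finite subcover'' has no justification: a general compact or Polish semigroup has no homogeneity, so an open set cannot be translated around to cover $X$, and a fiber with nonempty interior is not ``an open subsemigroup of finite index.'' Finally, the division of labour between $\cov(\M)$ (yielding polyboundedness) and $\cov(\overline{\mathcal N})$ (yielding only polyfiniteness) is asserted rather than derived. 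In short, your topological reformulation is correct, but every step that actually requires an idea is left open, so the proposal cannot be counted as a proof.
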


\subsection{Prime coideals in semigroups}

A subset $C$ of a semigroup $X$ is called a ({\em prime}) {\em coideal} if $X\setminus C$ is an ideal in $X$ (and $C$ is a subsemigroup of $X$). A subset $C\subseteq X$ is a prime coideal in $X$ if and only if its characteristic function
$$\chi_C:X\to\{0,1\},\quad\chi_C:x\mapsto\begin{cases}1&\mbox{if $x\in C$};\\
0&\mbox{if $x\in X\setminus C$};
\end{cases}
$$is a homomorphism from $X$ to the semilattice $\{0,1\}$ endowed with the operation of minimum.

\begin{lemma}\label{l:pci} If a semigroup $X$ is absolutely (resp. projectively) $\mathsf{T_{\!z}S}$-closed, then any prime coideal in $X$ is absolutely (resp. projectively) $\mathsf{T_{\!z}S}$-closed.
\end{lemma}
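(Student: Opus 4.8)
The plan is to exploit the defining feature of a prime coideal $C\subseteq X$: its complement $I\defeq X\setminus C$ is an ideal, while $C$ itself is a subsemigroup. This dichotomy lets me extend any homomorphism (or congruence) defined on $C$ to all of $X$ by collapsing the ideal $I$ to an adjoined zero, after which the absolute (resp.\ projective) $\mathsf{T_{\!z}S}$-closedness of $X$ applies, and the resulting closedness can be pushed back down to $C$ by intersecting with the relevant subspace.

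For the absolute case, let $h:C\to Y$ be a homomorphism into a semigroup $Y\in\mathsf{T_{\!z}S}$; since $C$ carries the discrete topology, $h$ is automatically continuous, so I must show $h[C]$ is closed in $Y$. I would form the $0$-extension $Y^0=Y\cup\{0\}$ equipped with the topology in which $0$ is an isolated point and $Y$ retains its topology. A short check shows $Y^0$ is again a topological semigroup (the preimage of the open set $\{0\}$ under multiplication is $(\{0\}\times Y^0)\cup(Y^0\times\{0\})$, which is open) and that $Y^0$ is Tychonoff and zero-dimensional, hence $Y^0\in\mathsf{T_{\!z}S}$. Then I extend $h$ to $\bar h:X\to Y^0$ by setting $\bar h|_C=h$ and $\bar h[I]=\{0\}$. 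Using that $C$ is a subsemigroup and $I$ an ideal, one checks case-by-case that $\bar h$ is a homomorphism: if both arguments lie in $C$ the product lies in $C$ and $h$ is multiplicative, whereas if at least one argument lies in $I$ the product lies in $I$ and both sides equal $0$. By the absolute $\mathsf{T_{\!z}S}$-closedness of $X$, the image $\bar h[X]=h[C]\cup\{0\}$ is closed in $Y^0$; intersecting with the subspace $Y=Y^0\setminus\{0\}$ gives that $h[C]=\bar h[X]\cap Y$ is closed in $Y$, as desired.

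For the projective case, let $\approx$ be a congruence on $C$; I must show the quotient $C/_{\approx}$ is $\mathsf{T_{\!z}S}$-closed. I would first extend $\approx$ to a congruence $\approx'$ on $X$ by declaring $x\approx' y$ iff either $x,y\in C$ with $x\approx y$, or $x,y\in I$, or $x=y$. Again the ideal/subsemigroup dichotomy makes the compatibility check routine, and since $I$ becomes an absorbing class, the quotient is canonically $X/_{\approx'}\cong(C/_{\approx})^0$. Now given an arbitrary isomorphic topological embedding $j:C/_{\approx}\to Z$ with $Z\in\mathsf{T_{\!z}S}$, I form $Z^0$ as above and extend $j$ to an embedding $j^0:(C/_{\approx})^0\to Z^0$ with $j^0(0)=0$. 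This exhibits $X/_{\approx'}\cong(C/_{\approx})^0$ as a subsemigroup of $Z^0\in\mathsf{T_{\!z}S}$; since $X$ is projectively $\mathsf{T_{\!z}S}$-closed, $X/_{\approx'}$ is $\mathsf{T_{\!z}S}$-closed, so $j^0[(C/_{\approx})^0]=j[C/_{\approx}]\cup\{0\}$ is closed in $Z^0$, and intersecting with $Z$ shows $j[C/_{\approx}]$ is closed. As $j$ was arbitrary, $C/_{\approx}$ is $\mathsf{T_{\!z}S}$-closed.

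I expect the only genuine content to be the bookkeeping of the extension step: the well-definedness of $\bar h$ (resp.\ of $\approx'$) and of the isomorphism $X/_{\approx'}\cong(C/_{\approx})^0$ uses precisely the two hypotheses packaged into the notion of a prime coideal, and the degenerate case $C=X$ (where $I=\emptyset$, no zero is adjoined, and the conclusion is immediate) should be noted. The topological ingredients—that adjoining an isolated zero keeps a semigroup inside $\mathsf{T_{\!z}S}$, and that closedness in $Y^0$ restricts to closedness in the subspace $Y$—constitute the main obstacle only insofar as they require the small continuity-of-multiplication verification above; everything else is formal.
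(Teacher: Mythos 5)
Your proof is correct and takes essentially the same approach as the paper: both arguments extend the given map from $C$ to all of $X$ by collapsing the ideal $X\setminus C$ to an adjoined isolated zero of the codomain, observe that this $0$-extension stays in $\mathsf{T_{\!z}S}$, invoke the closedness of $X$ there, and recover the closedness of $h[C]$ by intersecting with $Y$. The only cosmetic difference is that you treat the projective case by explicitly extending the congruence to $X$, whereas the paper handles both cases uniformly via homomorphisms with (discrete) images.
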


\begin{proof} Assume that a semigroup $X$ is absolutely (resp. projectively) $\mathsf{T_{\!z}S}$-closed and let $C$ be a prime coideal in $X$. To prove that the semigroup $C$ is absolutely (resp. projectively) $\mathsf{T_{\!z}S}$-closed, take any homomorphism $h:C\to Y$ to a topological semigroup $(Y,\tau)\in\mathsf{T_{\!z}S}$ (such that the image $h[C]$ is discrete in $Y$). Since $C$ is a prime coideal in $X$, the map
$$\bar h:X\to Y^0,\quad \bar h:x\mapsto\begin{cases}h(x)&\mbox{if $x\in C$},\\
0&\mbox{if $x\in X\setminus C$},
\end{cases}
$$is a homomorphism from $X$ to the $0$-extension $Y^0$ of the topological semigroup $Y$, endowed with the topology $\tau^0=\{U\subseteq Y^0:U\cap Y\in\tau\}$.  It follows from $(Y,\tau)\in\mathsf{T_{\!z}S}$ that $(Y^0,\tau^0)\in\mathsf{T_{\!z}S}$. By the absolute (resp. projective) $\mathsf{T_{\!z}S}$-closedness of $X$, the image $\bar h[X]$ is closed in $(Y^0,\tau^0)$ and then the set $h[C]=h[X]\cap Y$ is closed in $(Y,\tau)$, witnessing that the semigroup $C$ is absolutely (resp. projectively) $\mathsf{T_{\!z}S}$-closed.
\end{proof}

\subsection{Viable idempotents in semigroups} We recall that an idempotent $e$ of a semigroup $X$ is {\em viable} if the subsemigroup $\frac{H_e}e\defeq\{x\in X:xe=ex\in H_e\}$ is a prime coideal in $X$. By $V\!E(X)$ we denote the set of viable idempotents in $X$.

The following lemma was proved in \cite[2.5]{GCCS}.

\begin{lemma}\label{l:viable} For any semigroup $X$ we have $E(I\!Z(X))=E(Z)\cap I\!Z(X)\subseteq V\!E(X)$.
\end{lemma}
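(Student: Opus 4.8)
The plan is to verify the set equality directly and then establish the inclusion into $V\!E(X)$ by checking that each $e\in E(I\!Z(X))$ satisfies the coideal condition defining viability.

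For the equality, observe that since $I\!Z(X)\subseteq Z(X)$, idempotency of an element is intrinsic and independent of the ambient subsemigroup; hence $E(I\!Z(X))=E(X)\cap I\!Z(X)$, while $E(Z(X))\cap I\!Z(X)=E(X)\cap Z(X)\cap I\!Z(X)=E(X)\cap I\!Z(X)$, so both sides coincide. This step is routine.

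For the inclusion, fix $e\in E(I\!Z(X))$ and set $C\defeq\frac{H_e}e=\{x\in X:xe=ex\in H_e\}$. Since $e\in Z(X)$ we have $xe=ex$ for all $x$, so $C=\{x\in X:ex\in H_e\}$; moreover $exe=ex$, whence $eX=eXe$ is a submonoid with identity $e$, and $H_e$ is precisely its group of units. The computation $(ex)(ey)=exey=e(xy)$ (using $xe=ex$) gives the key identity $e(xy)=(ex)(ey)$. If $x,y\in C$ then $ex,ey\in H_e$, and since $H_e$ is a group, $(ex)(ey)=e(xy)\in H_e$, i.e. $xy\in C$; thus $C$ is a subsemigroup. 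It remains to prove that $X\setminus C$ is an ideal, equivalently that $xy\in C$ implies $x\in C$ and $y\in C$.

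The hard part is this last implication, and it is exactly where the stronger hypothesis $e\in I\!Z(X)$ (rather than merely $e\in Z(X)$) enters. Suppose $xy\in C$, so $g\defeq e(xy)=(ex)(ey)\in H_e$ with two-sided inverse $g^{-1}\in H_e$. Because $e\in I\!Z(X)$ we have $eX\subseteq Z(X)$, so both $ex$ and $ey$ are central in $X$. I would then show $w\defeq(ey)g^{-1}\in eX$ is a two-sided inverse of $ex$ in the monoid $eX$: the right identity $(ex)w=(ex)(ey)g^{-1}=gg^{-1}=e$ follows from associativity, while the left identity $w(ex)=(ey)g^{-1}(ex)=(ex)(ey)g^{-1}=gg^{-1}=e$ uses the centrality of $ex$ to commute it past $g^{-1}$ and past $ey$. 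Hence $ex\in H_e$, so $x\in C$, and symmetrically $ey\in H_e$, so $y\in C$. This shows $C$ is a prime coideal and therefore $e$ is viable, completing the proof that $E(I\!Z(X))\subseteq V\!E(X)$.
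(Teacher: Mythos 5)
Your proof is correct. Be aware, though, that the paper itself gives no proof of this lemma: it is imported verbatim from an earlier paper of the authors (cited as Lemma 2.5 of the preprint on subgroups of categorically closed semigroups), so there is no internal argument to compare yours against. On its own terms your argument is complete and well organized: the reduction of ``$X\setminus C$ is an ideal'' to the implication $xy\in C\Rightarrow x\in C\wedge y\in C$, the identity $e(xy)=(ex)(ey)$ coming from centrality of $e$, and the punchline that invertibility of $(ex)(ey)$ in the local monoid $eXe$ forces invertibility of each factor because $ex$ and $ey$ are themselves central, are all valid; that last point is exactly where the hypothesis $eX\subseteq Z(X)$ (rather than merely $e\in Z(X)$) enters, in accordance with the paper's remark that central idempotents need not be viable. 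The only step you assert without proof, namely that $H_e$ is the group of units of $eXe$, is a standard semigroup fact (the unit group of $eXe$ is a subgroup with identity $e$, hence contained in the maximal such subgroup; conversely each $h\in H_e$ equals $ehe$ and is inverted in $eXe$ by $h^{-1}$), so it leaves no genuine gap.
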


\begin{remark}\label{rem26}
The inclusion $E(Z)\cap I\!Z(X)\subseteq V\!E(X)$ in Lemma~\ref{l:viable} cannot be improved to the inclusion $E(X)\cap Z(X)\subseteq V\!E(X)$: by \cite{BG16} or \cite{CM} there exist infinite congruence-free monoids. In every  congruence-free monoid $X\ne\{1\}$ the idempotent $1$ is central but not viable.
\end{remark}

\section{Proof of Theorem~\ref{t:order}}\label{s:order}

In this section, for any semigroup $X$ we study the order properties of the posets $V\!E(X)$ and $X/_{\Updownarrow}$ and prove Theorem~\ref{t:order}.
By $\two$ we denote the two-element semilattice $\{0,1\}$ endowed with the operation of minimum.

\begin{proposition}\label{p:VE-order} Let $X$ be a semigroup and $q:X\to X/_{\Updownarrow}$ be the quotient homomorphism onto its semilattice reflection. The restriction $q{\restriction}_{V\!E(X)}$ is injective and hence is an isomorphic embedding of the poset $V\!E(X)$ into the poset $X/_{\Updownarrow}$.
\end{proposition}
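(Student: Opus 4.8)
The plan is to prove the stronger statement that $q{\restriction}_{V\!E(X)}$ is an \emph{order embedding}, i.e. that for $e,f\in V\!E(X)$ we have $e\le f$ in $V\!E(X)$ if and only if $q(e)\le q(f)$ in the semilattice $X/_{\Updownarrow}$; injectivity then follows by antisymmetry. The key tool is a factorization lemma. For a viable idempotent $e$, the set $\frac{H_e}e$ is a prime coideal, so by the description of prime coideals recalled in the Preliminaries its characteristic function $\chi_e\defeq\chi_{H_e/e}\colon X\to\two$ is a homomorphism into the semilattice $\two$. The kernel congruence of $\chi_e$ has quotient embedding into $\two$, hence is a semilattice congruence, and therefore contains the smallest semilattice congruence $\Updownarrow$. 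Consequently $\chi_e$ is constant on $\Updownarrow$-classes and factors through $q$; in particular $x\Updownarrow y$ implies $\big(x\in\frac{H_e}e\Leftrightarrow y\in\frac{H_e}e\big)$. Note also that $e\in\frac{H_e}e$, so $\chi_e(e)=1$.

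For the easy (order-preserving) direction, if $e\le f$ then $ef=e$, whence $q(e)q(f)=q(ef)=q(e)$ and thus $q(e)\le q(f)$; this uses only that $q$ is a homomorphism onto a semilattice. For the main (order-reflecting) direction, assume $q(e)\le q(f)$. Since $X/_{\Updownarrow}$ is commutative and idempotent, this means $q(ef)=q(fe)=q(e)$, i.e. $ef\Updownarrow e$ and $fe\Updownarrow e$. Applying the factorization lemma for the coideal $\frac{H_e}e$ together with $\chi_e(e)=1$, I would deduce $ef,fe\in\frac{H_e}e$. Unwinding the definition and using $e^2=e$: membership $ef\in\frac{H_e}e$ says $(ef)e=e(ef)\in H_e$, and since $(ef)e=efe$ and $e(ef)=ef$, this yields $efe=ef\in H_e$; likewise $fe\in\frac{H_e}e$ gives $(fe)e=e(fe)$, i.e. $fe=efe\in H_e$. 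Hence $ef=efe=fe\in H_e$. In particular $e$ and $f$ commute, so $(ef)(ef)=e^2f^2=ef$, meaning $ef$ is an idempotent lying in the group $H_e$; as the only idempotent of $H_e$ is its identity $e$, we conclude $ef=fe=e$, that is, $e\le f$.

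Injectivity is then immediate: if $q(e)=q(f)$ then $q(e)\le q(f)$ and $q(f)\le q(e)$, so $e\le f$ and $f\le e$, whence $e=f$ by antisymmetry of the natural order on $V\!E(X)$. I expect the main obstacle to be the factorization lemma itself, namely recognizing that viability is precisely what allows one to \emph{detect} the coideal $\frac{H_e}e$ at the level of the semilattice reflection, through the homomorphism $\chi_e\colon X\to\two$ and the minimality of $\Updownarrow$. Once this is in place, the remaining steps are short semigroup computations; the only genuinely algebraic point is that a product of two commuting idempotents that lands inside a maximal subgroup must be idempotent and hence equal to the identity of that subgroup.
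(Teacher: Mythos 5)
Your proof is correct and follows essentially the same route as the paper: both arguments hinge on the observation that the characteristic function of the prime coideal $\frac{H_e}{e}$ is a homomorphism to $\two$, hence factors through the semilattice reflection, which forces $ef=fe\in H_e$ and then $ef=e$ since the only idempotent of the group $H_e$ is $e$. Your version is slightly more thorough in that you explicitly verify the order-reflecting direction (justifying the ``isomorphic embedding of posets'' claim), whereas the paper only proves injectivity of $q{\restriction}_{V\!E(X)}$ and treats the order-embedding conclusion as immediate.
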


\begin{proof} Given two viable idempotents $e_1,e_2\in V\!E(X)$, assume that $q(e_1)=q(e_2)$. For every $i\in\{1,2\}$, the definition of a viable idempotent ensures that the semigroup $\frac{H_{e_i}}{e_i}=\{x\in X:xe_i=e_ix\in H_{e_i}\}$ is a coideal in $X$. Then the map $h_i:X\to \two$ defined by
$$h_i(x)=\begin{cases}1&\mbox{if $x\in\frac{H_{e_i}}{e_i}$},\\
0,&\mbox{otherwise},
\end{cases}
$$is a homomorphism. The equality $q(e_1)=q(e_2)$ implies that $$h_1(e_{2})=h_1(e_{1})=1= h_2(e_{2})=h_2(e_{1}).$$ Thus, $e_1e_2=e_2e_1\in H_{e_1}\cap H_{e_2}$, which implies $e_1=e_2$, and witnesses that the restriction $q{\restriction}_{V\!E(X)}$ is injective.
\end{proof}

In the following four lemmas we prove the statements of Theorem~\ref{t:order}.

\begin{lemma} For any ideally $\mathsf{T_{\!z}S}$-closed  semigroup $X$, the posets $X/_{\Updownarrow}$ and $V\!E(X)$ are well-founded.
\end{lemma}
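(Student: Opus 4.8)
The plan is to first reduce everything to the single assertion that the semilattice reflection $X/_{\Updownarrow}$ is well-founded. Indeed, by Proposition~\ref{p:VE-order} the map $q{\restriction}_{V\!E(X)}$ is an order embedding of $V\!E(X)$ into $X/_{\Updownarrow}$, and an order-embedded subset of a well-founded poset is again well-founded; so well-foundedness of $V\!E(X)$ comes for free once we have it for $X/_{\Updownarrow}$. Assume toward a contradiction that $X/_{\Updownarrow}$ is not well-founded, so it contains a strictly descending chain $\bar x_0>\bar x_1>\bar x_2>\cdots$; fix $x_n\in X$ with $q(x_n)=\bar x_n$. Since $X/_{\Updownarrow}$ is a semilattice we have $\bar x_i\bar x_j=\bar x_{\max\{i,j\}}$, so $U:=\bigcup_{n\in\w}{\uparrow}\bar x_n$ is an up-closed subsemilattice whose complement is an ideal. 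Pulling back, $C:=q^{-1}(U)$ is a prime coideal in $X$ and $I:=X\setminus C$ is an ideal, with $x_n\in C$ for all $n$. As $X$ is ideally $\mathsf{T_{\!z}S}$-closed, the quotient $S:=X/I$ is $\mathsf{T_{\!z}S}$-closed, and I shall contradict this by embedding $S$ non-closedly into a semigroup in $\mathsf{T_{\!z}S}$.

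The key is to build a zero-dimensional Hausdorff topological semigroup $Z:=S\sqcup\{p\}$ in which $S$ is a discrete but non-closed subsemigroup. Identifying $C$ with $S\setminus\{0\}$, where $0=[I]$ is the zero of $S$ (this part being vacuous when $I=\emptyset$), I turn $p$ into a zero of the \emph{subsemigroup} $C$ only: set $pc=cp=p$ for $c\in C$, $pp=p$, and crucially $p\cdot 0=0\cdot p=0$, so that $0$ remains a global zero absorbing $p$ while $p$ sits just above $0$ as the adjoined zero of $C$. For the topology I use the valuation $\nu\colon C\to\w$, $\nu(c)=\min\{n\in\w:q(c)\ge\bar x_n\}$, which is well defined on $C$, is a homomorphism into $(\w,\max)$, and satisfies $\nu(x_n)=n$. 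I declare every point of $S$ isolated and give $p$ the neighbourhood base $\{p\}\cup\{c\in C:\nu(c)\ge k\}$ for $k\in\w$. Then $[x_n]\to p$, so $p\in\overline S\setminus S$ and $S$ is not closed in $Z$.

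It remains to verify that $Z$ is a topological semigroup in $\mathsf{T_{\!z}S}$, and this verification is the main obstacle. Associativity of $Z$ is precisely where the prime-coideal property is needed: products of elements of $C$ stay in $C$ and never fall to $0$, so $p$ can consistently be a zero of $C$ alongside the global zero $0$. The basic neighbourhoods of $p$ are clopen and separate $p$ from the isolated points of $C$ (via $\nu$) and from $0$ (which lies outside $C$ and is thus excluded from them), whence $Z$ is Hausdorff and zero-dimensional, hence Tychonoff. The one genuinely delicate point of continuity is at the pairs $(p,0)$ and $([x_n],0)$: here $[x_n]\cdot 0=0=p\cdot 0$, so the apparent clash between the new limit $p$ and the old zero $0$ dissolves exactly because $p$ was arranged to be absorbed by $0$ rather than to absorb it; continuity at $(p,c)$ and $(p,p)$ then follows from $\nu(cc')=\max\{\nu(c),\nu(c')\}$ and the continuity of $\max$ on $\w\cup\{\infty\}$. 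Since $S=X/I$ embeds into $Z\in\mathsf{T_{\!z}S}$ as a discrete non-closed subsemigroup, $S$ is not $\mathsf{T_{\!z}S}$-closed, contradicting the ideal $\mathsf{T_{\!z}S}$-closedness of $X$. Therefore $X/_{\Updownarrow}$, and hence $V\!E(X)$, is well-founded.
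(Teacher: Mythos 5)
Your proof is correct and is essentially the paper's own argument: your semigroup $Z=(X/I)\sqcup\{p\}$, with $p$ a zero for the prime coideal $C$ absorbed by the global zero $0=[I]$, is exactly the paper's $S=P\cup\{P\}\cup\{I\}$, and your neighbourhoods $\{p\}\cup\nu^{-1}[k,\infty)$ coincide with the paper's basic sets $\{P\}\cup(P\setminus P_n)$. The valuation $\nu$ is a nice bookkeeping device, but the construction, the use of the increasing union of prime coideals $q^{-1}[{\uparrow}\bar x_n]$, and the contradiction with ideal $\mathsf{T_{\!z}S}$-closedness are the same.
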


\begin{proof} By Proposition~\ref{p:VE-order}, the poset $V\!E(X)$ embeds into the semilattice reflection $X/_{\Updownarrow}$ of $X$, so it suffices to prove that the poset $Y\defeq X/_{\Updownarrow}$ is well-founded. Assuming that $Y$ is not well-founded, we can find a strictly decreasing sequence $(y_n)_{n\in\omega}$ in $Y$. For every $n\in\w$ consider the upper set ${\uparrow}y_n=\{y\in Y:y_n\le y\}$  and observe that ${\uparrow}y_n$ is a prime coideal in $Y$. Consequently, its preimage $P_n=q^{-1}[{\uparrow}y_n]$ is a prime coideal in $X$.

It is easy to see that $P=\bigcup_{n\in\w}P_n$ is a subsemigroup of $X$ and the complement $I\defeq X\setminus P$ is an ideal in $X$. Consider the semigroup $S=P\cup\{P\}\cup\{I\}$ endowed with the semigroup operation $*:S\times S\to S$ defined by
$$x*y=\begin{cases}xy&\mbox{if $(x,y)\in P\times P$;}\\
I&\mbox{if $(x,y)\in (S\times\{I\})\cup(\{I\}\times S)$};\\
P&\mbox{otherwise}.
\end{cases}
$$
Endow the semigroup $S$ with the topology $\tau$ generated by the base
$$\big\{\{P\}\cup (P\setminus P_n):n\in\w\big\}\cup\big\{\{x\}:x\in P\cup\{I\}\big\}$$ and observe that $(S,\tau)$ is a Hausdorff zero-dimensional topological semigroup with a unique non-isolated point $P$. Since $(S,\tau)$ contains the quotient semigroup $X/I=P\cup\{I\}$ as a discrete subsemigroup, the semigroup $X$ is not ideally $\mathsf{T_{\!z}S}$-closed, which contradicts our assumption.
\end{proof}

\begin{lemma}\label{l:pc=>cf} For any projectively $\mathsf{T_{\!z}S}$-closed  semigroup $X$, the posets $X/_{\Updownarrow}$ and $V\!E(X)$ are chain-finite.
\end{lemma}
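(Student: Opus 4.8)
The plan is to reduce the statement to the semilattice reflection $Y\defeq X/_{\Updownarrow}$ and then, from a hypothetical infinite chain, manufacture a quotient of $X$ that fails to be $\mathsf{T_{\!z}S}$-closed. The reduction is immediate: by Proposition~\ref{p:VE-order} the poset $V\!E(X)$ order-embeds into $Y$, and chain-finiteness passes to subposets (an infinite subset of $V\!E(X)$ maps to an infinite subset of $Y$, where it must contain an incomparable pair, which pulls back). So it suffices to show that $Y$ is chain-finite. Two features of $Y$ will be used: it is a commutative semilattice, and it inherits the projective $\mathsf{T_{\!z}S}$-closedness of $X$, because every congruence on $Y$ pulls back to a congruence on $X$ with the same quotient. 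In particular, since projective $\mathsf{T_{\!z}S}$-closedness implies ideal $\mathsf{T_{\!z}S}$-closedness, the preceding lemma guarantees that $Y$ is well-founded.

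Next I would extract a monotone sequence. Assuming $Y$ is not chain-finite, it contains an infinite chain $L$; being a linearly ordered subset of a well-founded poset, $L$ is well-ordered, so its first $\w$ elements form a strictly increasing sequence $y_0<y_1<y_2<\cdots$. (It is precisely well-foundedness that rules out the decreasing case, which the construction below does not treat.) For each $n$ the upper set $C_n\defeq{\uparrow}y_n$ is a prime coideal of $Y$, and since the $y_n$ increase, $C_0\supseteq C_1\supseteq\cdots$; hence for each $y$ the set $\{n\in\w:y\in C_n\}$ is an initial segment of $\w$. I would then define $\lambda\colon Y\to(\w+1,\min)$ by $\lambda(y)=|\{n\in\w:y\in C_n\}|$, where $\w+1=\{0,1,2,\dots,\w\}$ is equipped with the operation $\min$. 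Because each characteristic function $\chi_{C_n}$ is a homomorphism into $\two$, one checks that $\lambda$ respects $\min$, and $\lambda(y_m)=m+1$; thus the image $Z\defeq\lambda[Y]$ is a subsemilattice of $\w+1$ containing every positive integer. Composing $\lambda$ with the quotient $X\to Y$ realizes $Z$ as a quotient of $X$, so projective $\mathsf{T_{\!z}S}$-closedness forces $Z$ to be $\mathsf{T_{\!z}S}$-closed.

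The contradiction comes from embedding $Z$ unfavourably, and this is where the main obstacle lies. Since $\mathsf{T_{\!z}S}$-closedness concerns \emph{isomorphic topological embeddings}, I must realise $Z$ as a \emph{discrete} non-closed subsemigroup of some zero-dimensional Hausdorff topological semilattice; the naive inclusion $Z\hookrightarrow\w+1$ fails exactly when $\w\in Z$, since then the points $1,2,3,\dots$ converge to a point of the image, destroying discreteness. To repair this I would use the chain $W=\{0<1<2<\cdots<a<b\}$ of order type $\w+2$ with its order topology, a compact zero-dimensional Hausdorff topological semilattice under $\min$ whose only non-isolated point is $a$. Embedding $Z$ by sending each finite $n$ to $n$ and the possible top $\w$ to $b$ gives an injective homomorphism whose image avoids $a$, hence is discrete, while $1,2,3,\dots\in Z$ converge to $a\notin j[Z]$, so the image is not closed. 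This contradicts the $\mathsf{T_{\!z}S}$-closedness of $Z$, and therefore $Y$ is chain-finite.

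I expect the genuine difficulty to be this discreteness bookkeeping rather than the algebra: verifying that $\lambda$ and the final embedding respect $\min$ is routine, as is checking that $W$ (a countable chain with the order topology) is a zero-dimensional Hausdorff topological semilattice. The one point requiring care is that the hypothesis must be used at full strength — the congruence $\ker(\lambda\circ q)$ is not induced by an ideal, so projective (not merely ideal) $\mathsf{T_{\!z}S}$-closedness is essential at the step where $Z$ is declared $\mathsf{T_{\!z}S}$-closed, while ideal closedness enters only through well-foundedness to secure the increasing sequence.
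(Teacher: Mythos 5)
Your proof is correct, but it takes a genuinely different route from the paper's. The paper disposes of the semilattice reflection $Y=X/_{\Updownarrow}$ in one stroke: $Y$ is a quotient of $X$, hence projectively $\mathsf{T_{\!z}S}$-closed, hence $\mathsf{T_{\!z}S}$-closed, and Theorem~\ref{t:C-closed} applied to the commutative semigroup $Y$ already lists chain-finiteness among its necessary conditions; the statement for $V\!E(X)$ then follows from Proposition~\ref{p:VE-order} exactly as in your first paragraph. You instead reprove the relevant fragment of Theorem~\ref{t:C-closed} by hand for this particular $Y$: you invoke the preceding well-foundedness lemma to upgrade a hypothetical infinite chain to a strictly increasing $\w$-sequence, collapse $Y$ onto a subsemilattice $Z$ of $(\w+1,\min)$ via the counting homomorphism $\lambda$, and exhibit $Z$ as a discrete non-closed subsemilattice of the compact zero-dimensional chain of type $\w+2$. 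The verifications you flag do go through: each ${\uparrow}y_n$ is a prime coideal, so the membership sets are nested initial segments and $\lambda$ respects $\min$; and the image in $W$ avoids the unique non-isolated point, so discreteness (which plain $\mathsf{T_{\!z}S}$-closedness of the discrete semigroup $Z$ requires of the embedding) holds while closedness fails. The trade-off is clear: the paper's argument needs only the plain $\mathsf{T_{\!z}S}$-closedness of $Y$ but imports a substantial external theorem, whereas yours is self-contained modulo the previous lemma but consumes the hypothesis at full projective strength twice --- once for well-foundedness (via ideal closedness) and once to pass to the non-ideal quotient $Z$. Both are legitimate; yours amounts to a direct proof that a projectively $\mathsf{T_{\!z}S}$-closed, well-founded semilattice is chain-finite.
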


\begin{proof} Let $q:X\to X/_{\Updownarrow}$ be the quotient homomorphism of $X$ onto its semilattice reflection. If the semigroup $X$ is projectively $\mathsf{T_{\!z}S}$-closed, then its semilattice reflection $X/_{\Updownarrow}$ is projectively $\mathsf{T_{\!z}S}$-closed and hence $\mathsf{T_{\!z}S}$-closed. By Theorem~\ref{t:C-closed}, the semilattice   $X/_{\Updownarrow}$ is chain-finite. Then $X/_{\Updownarrow}$ is also chain-finite as a poset. By Proposition~\ref{p:VE-order}, the poset $V\!E(X)$ is chain-finite, being order isomorphic to a subset of the chain-finite poset $X/_{\Updownarrow}$.
\end{proof}

\begin{lemma}\label{l:pd=>fin} If a semigroup $X$ is projectively $\mathsf{T_{\!z}S}$-closed and projectively $\mathsf{T_{\!z}S}$-discrete, then the sets  $X/_{\Updownarrow}$ and $V\!E(X)$ are finite.
\end{lemma}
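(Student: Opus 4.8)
The plan is to combine the two preceding lemmas in the obvious way: under projective $\mathsf{T_{\!z}S}$-closedness alone, Lemma~\ref{l:pc=>cf} already gives that $X/_{\Updownarrow}$ and $V\!E(X)$ are chain-finite. So what remains is to upgrade ``chain-finite'' to ``finite'' using the additional hypothesis of projective $\mathsf{T_{\!z}S}$-discreteness. As in the earlier proofs, by Proposition~\ref{p:VE-order} it suffices to handle $Y\defeq X/_{\Updownarrow}$, since $V\!E(X)$ embeds into $Y$ as a poset; any finiteness of $Y$ transfers to $V\!E(X)$.

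First I would observe that $Y$ is a semilattice, hence a commutative semigroup, and that because $X$ is projectively $\mathsf{T_{\!z}S}$-closed and projectively $\mathsf{T_{\!z}S}$-discrete, the quotient $Y=X/_{\Updownarrow}$ inherits both properties (a quotient of a quotient is a quotient, and the defining homomorphisms to $\mathsf{T_{\!z}S}$ factor through $Y$). Thus $Y$ is itself a $\mathsf{T_{\!z}S}$-closed and $\mathsf{T_{\!z}S}$-discrete semilattice. The strategy is to show that an infinite such semilattice cannot be simultaneously $\mathsf{T_{\!z}S}$-closed and $\mathsf{T_{\!z}S}$-discrete, by exhibiting a continuous homomorphism (equivalently a semigroup topology) into a $\mathsf{T_{\!z}S}$ semigroup whose image fails to be discrete. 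Since $Y$ is already chain-finite by Lemma~\ref{l:pc=>cf}, an infinite $Y$ must, by an application of the infinite Ramsey/Dilworth-type dichotomy for posets, contain an infinite antichain (a chain-finite poset has no infinite chain, so infiniteness forces an infinite antichain).

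So the core is: produce on an infinite semilattice $Y$ with an infinite antichain $A=\{a_n:n\in\w\}$ a zero-dimensional Hausdorff semigroup topology in which $A$ (or a suitable infinite subset built from $A$) is non-discrete, or else a homomorphism onto such a space. The natural candidate is to map $Y$ to a semilattice like $\{0\}\cup A$ with a one-point-compactification-style topology in which the points $a_n$ converge to $0$; one sends each $y\in Y$ to $a_n$ if $y$ lies in the appropriate piece and to $0$ otherwise, using the prime-coideal structure (upper sets of antichain elements give homomorphisms to $\two$) to guarantee the map is a semigroup homomorphism, exactly as in the well-foundedness lemma above. One checks that minimum of distinct antichain elements collapses to $0$, so the target operation is continuous, and that the convergence $a_n\to 0$ makes the image non-discrete, contradicting projective $\mathsf{T_{\!z}S}$-discreteness. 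Hence $Y$ has no infinite antichain, and together with chain-finiteness this forces $|Y|<\infty$.

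The main obstacle I anticipate is the precise construction of the topology witnessing non-discreteness while remaining a genuine \emph{semigroup} topology on a $\mathsf{T_{\!z}S}$ target: one must check that the proposed operation on $\{0\}\cup A$ is continuous at the limit point $0$ (the product of a convergent net of $a_n$'s with a fixed element must converge correctly), and that the map $Y\to\{0\}\cup A$ really respects the semilattice operation, which rests on the antichain elements being pairwise incomparable so that $a_m\wedge a_n$ maps to $0$ for $m\ne n$. Verifying the homomorphism property cleanly is where the coideal/characteristic-function machinery from Section~2 (the maps $h_i:X\to\two$) will do the work; the topological continuity check is routine but must be done carefully at $0$.
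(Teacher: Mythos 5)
Your high-level plan is sound and genuinely different from the paper's, but the central construction as you describe it has a real defect. For comparison: the paper does not extract an antichain at all. It takes the set $H$ of \emph{all} homomorphisms $X/_{\Updownarrow}\to\two$, notes that the diagonal map $\delta:X/_{\Updownarrow}\to\two^H$ is injective (homomorphisms to $\two$ separate points of a semilattice), and then uses projective $\mathsf{T_{\!z}S}$-discreteness to conclude that $\delta[X/_{\Updownarrow}]$ is discrete and projective $\mathsf{T_{\!z}S}$-closedness to conclude that it is closed in the compact semilattice $\two^H$; a closed discrete subset of a compact space is finite. No chain-finiteness, no Ramsey dichotomy, no hand-built topology.

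The gap in your version is that the map $Y\to\{0\}\cup A$ sending $y$ to $a_n$ ``if $y$ lies in the appropriate piece'' is in general not a homomorphism: the upper sets ${\uparrow}a_n$ of an antichain are pairwise incomparable but need not be pairwise disjoint. If some $y$ lies above two distinct elements $a_n,a_m$ (e.g.\ a common upper bound, or a unit of $Y$), then $y$ must be sent to $0$, yet for $y'=a_n$ one has $yy'=a_n$, so $\phi(yy')=a_n$ while $\phi(y)\phi(y')=0\cdot a_n=0$. The repair is exactly the characteristic-function machinery you allude to, assembled into a product rather than a partition: take $h:Y\to\two^{\w}$, $h(y)=(\chi_{{\uparrow}a_n}(y))_{n\in\w}$. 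This is a genuine homomorphism into a compact zero-dimensional Hausdorff topological semilattice; $h(a_n)$ is the $n$-th unit vector, and these converge to the zero vector, which equals $h(a_0a_1)\in h[Y]$ because $a_0a_1\le a_0,a_1$ lies in no ${\uparrow}a_k$ by the antichain property. Hence $h\circ q$ has non-discrete image and projective $\mathsf{T_{\!z}S}$-discreteness fails, without even invoking closedness at this step. With that substitution, your route (chain-finiteness from Lemma~\ref{l:pc=>cf}, plus the Ramsey argument extracting an infinite antichain from an infinite chain-finite poset, plus Proposition~\ref{p:VE-order} to transfer finiteness to $V\!E(X)$) does prove the lemma; but as written the key homomorphism does not exist.
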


\begin{proof} Let $q:X\to X/_{\Updownarrow}$ be the quotient homomorphism of $X$ onto its semilattice  reflection.
Consider the set $H$ of all homomorphisms from $X/_{\Updownarrow}$ to the two-element semilattice  $\two$. Since homomorphisms to $\two$ separate points of semilattices, the homomorphism $\delta:X/_{\Updownarrow}\to \two^{H}$, $\delta:x\mapsto(h(x))_{h\in H}$, is injective.  Since the semilattice $X/_{\Updownarrow}$ is  $\mathsf{T_{\!z}S}$-closed and $\mathsf{T_{\!z}S}$-discrete, the image $\delta[X/_{\Updownarrow}]$ is a closed discrete subsemilattice of the compact topological semilattice $\two^H$. Hence $X/_{\Updownarrow}$ is finite and so is the set $V\!E(X)$.
\end{proof}

Theorem~\ref{t:discrete} and Lemma~\ref{l:pd=>fin} imply the following lemma.

\begin{lemma} For any absolutely $\mathsf{T_{\!1}S}$-closed semigroup $X$, the sets  $X/_{\Updownarrow}$ and $V\!E(X)$ are finite.
\end{lemma}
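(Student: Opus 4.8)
The plan is to deduce the statement directly from the two results flagged immediately before it, namely Theorem~\ref{t:discrete}(2) and Lemma~\ref{l:pd=>fin}, together with the elementary monotonicity of the closedness and discreteness properties under inclusion of the ambient class. First I would invoke Theorem~\ref{t:discrete}(2): since $X$ is absolutely $\mathsf{T_{\!1}S}$-closed, $X$ is simultaneously projectively $\mathsf{T_{\!1}S}$-closed and projectively $\mathsf{T_{\!1}S}$-discrete.

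Next I would pass from the class $\mathsf{T_{\!1}S}$ to the smaller class $\mathsf{T_{\!z}S}$. Because $\mathsf{T_{\!z}S}\subseteq\mathsf{T_{\!1}S}$, every requirement imposed with respect to the larger class is at least as strong as the corresponding requirement for the smaller one. Concretely, if every quotient $X/_{\approx}$ has closed image in all $Y\in\mathsf{T_{\!1}S}$, then in particular it has closed image in all $Y\in\mathsf{T_{\!z}S}$, so projective $\mathsf{T_{\!1}S}$-closedness implies projective $\mathsf{T_{\!z}S}$-closedness; symmetrically, discreteness of the image for all injective homomorphisms into $\mathsf{T_{\!1}S}$-semigroups yields discreteness of the image for all injective homomorphisms into the subclass $\mathsf{T_{\!z}S}$, so projective $\mathsf{T_{\!1}S}$-discreteness implies projective $\mathsf{T_{\!z}S}$-discreteness. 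These are exactly the horizontal implications recorded in the implication diagram of the introduction.

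Finally I would feed the resulting pair of properties, projective $\mathsf{T_{\!z}S}$-closedness and projective $\mathsf{T_{\!z}S}$-discreteness, into Lemma~\ref{l:pd=>fin}, whose conclusion is precisely that $X/_{\Updownarrow}$ and $V\!E(X)$ are finite. There is no real obstacle in this argument; the only point to check is that the monotonicity runs in the favorable direction for \emph{both} ingredients, in particular that shrinking the ambient class weakens the discreteness requirement exactly as it weakens the closedness requirement, so that both properties transfer from $\mathsf{T_{\!1}S}$ down to $\mathsf{T_{\!z}S}$ and Lemma~\ref{l:pd=>fin} becomes applicable.
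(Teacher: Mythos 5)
Your proposal is correct and is exactly the paper's argument: the paper proves this lemma by the one-line remark that Theorem~\ref{t:discrete} and Lemma~\ref{l:pd=>fin} imply it, which is precisely the chain you spell out (absolute $\mathsf{T_{\!1}S}$-closedness $\Rightarrow$ projective $\mathsf{T_{\!1}S}$-closedness and $\mathsf{T_{\!1}S}$-discreteness $\Rightarrow$ the same for the subclass $\mathsf{T_{\!z}S}$ by monotonicity $\Rightarrow$ finiteness of $X/_{\Updownarrow}$ and $V\!E(X)$). One cosmetic remark: projective $\C$-discreteness quantifies over \emph{all} homomorphisms, not just injective ones, but this does not affect your monotonicity step, which goes through verbatim.
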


\section{Absolutely $\mathsf{T_{\!1}S}$-closed semigroups}\label{s:T1}

In this section we establish some properties of absolutely $\mathsf{T_{\!1}S}$-closed semigroups and prove the following theorem that implies the characterization Theorem~\ref{t:main1} announced in the introduction.

\begin{theorem}\label{t:mainT1} For any semigroup $X$ we have implications $(1)\Ra(2)\Leftrightarrow(3)\Ra (4)\Ra(5)\Ra(6)$ of the following statements:
\begin{enumerate}
\item $X$ is finite;
\item $X$ is absolutely $\mathsf{T_{\!1}S}$-closed;
\item $X$ is projectively $\mathsf{T_{\!1}S}$-closed and projectively $\mathsf{T_{\!1}S}$-discrete;
\item $X$ is projectively $\mathsf{T_{\!z}S}$-closed and projectively $\mathsf{T_{\!z}S}$-discrete;
\item $X$ is projectively $\mathsf{T_{\!z}S}$-closed and $Z(X)\cap H(X)\cap\korin{\IN}{V\!E(X)}$ is finite;
\item $Z(X)$ is periodic and $Z(X)\cap \korin{\IN}{V\!E(X)}$ is finite.
\end{enumerate}
If $X$ is commutative, then the conditions $(1)$--$(6)$ are equivalent.
\end{theorem}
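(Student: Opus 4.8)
The plan is to verify the chain $(1)\Ra(2)\Leftrightarrow(3)\Ra(4)\Ra(5)\Ra(6)$ one link at a time, and then, in the commutative case, to close the cycle by proving $(6)\Ra(1)$. The first three links are essentially formal. For $(1)\Ra(2)$, any homomorphism from a finite $X$ into a $T_1$ topological semigroup has finite, hence closed, image, so $X$ is absolutely $\mathsf{T_{\!1}S}$-closed. The equivalence $(2)\Leftrightarrow(3)$ is precisely Theorem~\ref{t:discrete}(2). For $(3)\Ra(4)$ I would only use that $\mathsf{T_{\!z}S}\subseteq\mathsf{T_{\!1}S}$: a quotient that is closed (resp.\ discrete) in every $\mathsf{T_{\!1}S}$-target is a fortiori closed (resp.\ discrete) in every $\mathsf{T_{\!z}S}$-target, so projective $\mathsf{T_{\!1}S}$-closedness and $\mathsf{T_{\!1}S}$-discreteness descend to the smaller class.

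The heart of the argument is $(4)\Ra(5)$. Half of $(5)$, namely projective $\mathsf{T_{\!z}S}$-closedness, is already part of $(4)$, so the task is to bound $Z(X)\cap H(X)\cap\korin{\IN}{V\!E(X)}$. Under $(4)$, Lemma~\ref{l:pd=>fin} gives that $V\!E(X)$ is finite, and since projective $\mathsf{T_{\!z}S}$-discreteness implies $\mathsf{T_{\!z}S}$-discreteness, Theorem~\ref{t:Z}(2) gives that $Z(X)$ is group-finite. The key observation is the decomposition
$$Z(X)\cap H(X)\cap\korin{\IN}{V\!E(X)}=\bigcup_{f\in V\!E(X)}\big(Z(X)\cap H_f\cap\korin{\IN}{\{f\}}\big),$$
where $\korin{\IN}{\{f\}}=\{x\in X:\exists n\in\IN\ x^n=f\}$: if $z$ lies in the left-hand set, then $z\in H_f$ for the identity $f$ of its maximal subgroup, and since powers of $z$ stay in $H_f$ while $z^n\in V\!E(X)\subseteq E(X)$ is an idempotent, the unique idempotent $f$ of $H_f$ satisfies $z^n=f$, whence $f\in V\!E(X)$ and $z\in\korin{\IN}{\{f\}}$. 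I would then check that each piece $G_f\defeq Z(X)\cap H_f\cap\korin{\IN}{\{f\}}$ is a subgroup of $Z(X)$: it contains $f$; a product of two of its elements again has a power equal to $f$ (using commutativity of $Z(X)$ and idempotency of $f$); and the inverse of $z$ with $z^n=f$ is the central power $z^{n-1}$. Hence each $G_f$ is finite by group-finiteness of $Z(X)$, and the displayed union is finite since $V\!E(X)$ is finite.

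For $(5)\Ra(6)$, projective $\mathsf{T_{\!z}S}$-closedness implies both $\mathsf{T_{\!z}S}$-closedness and ideal $\mathsf{T_{\!z}S}$-closedness. Theorem~\ref{t:Z}(1) then yields periodicity of $Z(X)$, and Theorem~\ref{t:ideal} yields that $Z(X)\cap\korin{\IN}{V\!E(X)}\setminus H(X)$ is finite; adding the finite set $Z(X)\cap H(X)\cap\korin{\IN}{V\!E(X)}$ supplied by $(5)$ makes $Z(X)\cap\korin{\IN}{V\!E(X)}$ finite. Finally, for commutative $X$ I close the loop with $(6)\Ra(1)$: here $Z(X)=X$ and every idempotent is viable, so $V\!E(X)=E(X)$; periodicity of $X$ says every element has an idempotent power, i.e.\ $X=\korin{\IN}{E(X)}=Z(X)\cap\korin{\IN}{V\!E(X)}$, which is finite by $(6)$.

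The main obstacle I expect is the decomposition step inside $(4)\Ra(5)$: one must see that membership in $H(X)$ together with a power landing in $V\!E(X)$ forces that power to equal the identity of the ambient maximal subgroup, and then that each resulting piece $G_f$ is a genuine subgroup of $Z(X)$, so that group-finiteness applies, rather than merely a finite-order subset. Verifying the closure and inverse properties of $G_f$ carefully, using that $Z(X)$ is a commutative subsemigroup and that $f$ is the identity of $H_f$, is the one place where the argument is more than bookkeeping.
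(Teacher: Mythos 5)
Your proposal is correct and follows essentially the same route as the paper: the formal links are handled identically, $(4)\Ra(5)$ rests on Lemma~\ref{l:pd=>fin} (finiteness of $V\!E(X)$) plus Theorem~\ref{t:Z}(1,2) and the decomposition of $Z(X)\cap H(X)\cap\korin{\IN}{V\!E(X)}$ into the finite subgroups $Z(X)\cap H_e$, $(5)\Ra(6)$ uses Theorem~\ref{t:ideal} exactly as in the paper, and $(6)\Ra(1)$ is the same periodicity argument. Your extra care in checking that each piece $Z(X)\cap H_f\cap\korin{\IN}{\{f\}}$ is a genuine subgroup (inverses being central powers, so group-finiteness applies) is just a more explicit version of the paper's remark that $Z(X)\cap H_e$ is empty or a finite subgroup of $Z(X)$.
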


\begin{proof} The implication $(1)\Ra(2)$ is trivial, the equivalence $(2)\Leftrightarrow(3)$ was proved in Theorem~\ref{t:discrete}, and the implication $(3)\Ra(4)$ is trivial.
\smallskip

To prove that $(4)\Ra(5)$, assume that the semigroup $X$ is projectively $\mathsf{T_{\!z}S}$-closed and projectively $\mathsf{T_{\!z}S}$-discrete.
By Lemma~\ref{l:pd=>fin} the set $V\!E(X)$ is finite and by Theorem~\ref{t:Z}(1,2), the semigroup $Z(X)$ is periodic and group-finite. Then for every $e\in E(X)$ the intersection $Z(X)\cap H_e$ is either empty or  a finite subgroup of $Z(X)$. In both cases, the set $Z(X)\cap H_e$ is finite.  Then the set $$Z(X)\cap H(X)\cap \korin{\IN}{V\!E(X)}=\bigcup_{e\in V\!E(X)}(Z(X)\cap H_e)$$is finite, being the union of finitely many finite sets.
\smallskip

To prove that $(5)\Ra(6)$, assume that the semigroup $X$ is projectively $\mathsf{T_{\!z}S}$-closed and the set $Z(X)\cap H(X)\cap\korin{\IN}{V\!E(X)}$ is finite. By Theorem~\ref{t:Z}(1), the semigroup $Z(X)$ is periodic. By Theorem~\ref{t:ideal}, the set $Z(X)\cap\korin{\infty}{V\!E(X)}\setminus H(X)$ is finite and then the set $$Z(X)\cap \korin{\IN}{V\!E(X)}=\big(Z(X)\cap\korin{\IN}{V\!E(X)}\setminus H(X)\big)\cup\big(Z(X)\cap\korin{\IN}{V\!E(X)}\cap H(X)\big)$$is finite, too.
\smallskip

Now assuming that $X$ is commutative, we shall prove that $(6)\Ra(1)$.
So, assume that the semigroup $Z(X)$ is periodic and $Z(X)\cap\korin{\IN}{V\!E(X)}$ is finite. Being commutative, the semigroup $X$ is viable and hence $V\!E(X)=E(X)$. The periodicity of $Z(X)$ implies that $Z(X)=Z(X)\cap\korin{\IN}{E(X)}=Z(X)\cap\korin{\IN}{V\!E(X)}$ and hence the commutative semigroup $X=Z(X)$ is finite.
\end{proof}

\begin{corollary} If a $Z$-viable semigroup is absolutely $\mathsf{T_{\!1}S}$-closed, then its center $Z(X)$ is finite and hence $\mathsf{T_{\!1}S}$-closed.
\end{corollary}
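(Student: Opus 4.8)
The plan is to read off the statement almost directly from Theorem~\ref{t:mainT1}, whose chain of implications $(2)\Ra(6)$ already contains all the hard work, and then to use the $Z$-viability hypothesis to promote the conclusion of $(6)$ from a statement about $Z(X)\cap\korin{\IN}{V\!E(X)}$ to a statement about all of $Z(X)$.

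First I would note that, being absolutely $\mathsf{T_{\!1}S}$-closed, the semigroup $X$ satisfies condition $(2)$ of Theorem~\ref{t:mainT1}, so by the implication $(2)\Ra(6)$ the center $Z(X)$ is periodic and the set $Z(X)\cap\korin{\IN}{V\!E(X)}$ is finite. The crux of the argument is then the elementary observation that $Z$-viability together with periodicity forces $Z(X)\subseteq\korin{\IN}{V\!E(X)}$. Indeed, given any $z\in Z(X)$, every power $z^n$ again commutes with all elements of $X$ and hence lies in $Z(X)$; periodicity supplies some $n\in\IN$ with $z^n\in E(X)$, so that $z^n\in Z(X)\cap E(X)$. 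Since $X$ is $Z$-viable, we have $Z(X)\cap E(X)\subseteq V\!E(X)$, whence $z^n\in V\!E(X)$, i.e. $z\in\korin{n}{V\!E(X)}\subseteq\korin{\IN}{V\!E(X)}$.

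Combining the two observations yields $Z(X)=Z(X)\cap\korin{\IN}{V\!E(X)}$, which is finite. To finish, I would recall that a finite semigroup carrying the discrete topology is $\mathsf{T_{\!1}S}$-closed: for any isomorphic topological embedding $h$ into a topological semigroup $Y\in\mathsf{T_{\!1}S}$, the image $h[Z(X)]$ is a finite subset of the $T_1$ space $Y$ and is therefore closed. This gives both assertions of the corollary.

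I do not expect any genuine obstacle here, since the substantive content is entirely absorbed by Theorem~\ref{t:mainT1}; the only new ingredient is the middle step, where periodicity of $Z(X)$ and $Z$-viability conspire to place the whole center inside $\korin{\IN}{V\!E(X)}$. This step mirrors the passage $(6)\Ra(1)$ in the proof of Theorem~\ref{t:mainT1}, where the identities $V\!E(X)=E(X)$ and $X=Z(X)$ for a commutative $X$ make the containment automatic; in the $Z$-viable case one simply restricts the same reasoning to the center.
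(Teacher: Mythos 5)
Your proposal is correct and follows essentially the same route as the paper: $Z$-viability gives $E(X)\cap Z(X)\subseteq V\!E(X)$, periodicity of $Z(X)$ places every central element in $\korin{\IN}{E(X)\cap Z(X)}\subseteq\korin{\IN}{V\!E(X)}$, and Theorem~\ref{t:mainT1} makes that set finite. The only cosmetic difference is that you extract periodicity from the chain $(2)\Ra(6)$ of Theorem~\ref{t:mainT1} while the paper cites Theorem~\ref{t:Z}(1) directly, which amounts to the same thing.
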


\begin{proof} The $Z$-viability of $X$ yields $E(X)\cap Z(X)\subseteq V\!E(X)$. By Theorem~\ref{t:Z}(1), the semigroup $Z(X)$ is periodic and hence $Z(X)=Z(X)\cap\korin{\IN}{E(X)\cap Z(X)}\subseteq Z(X)\cap \korin{\IN}{V\!E(X)}$ is finite, by Theorem~\ref{t:mainT1}.
\end{proof}

\section{A sufficient condition of the absolute $\mathsf{T_{\!2}S}$-closedness}\label{s:if}

In this section we shall prove a sufficient condition of the absolute $\mathsf{T_{\!2}S}$-closedness. We shall use the following theorem, proved by Stepp in \cite[Theorem 9]{Stepp75}.

\begin{theorem}[Stepp]\label{t:E} Every chain-finite semilattice is absolutely $\mathsf{T_{\!2}S}$-closed.
\end{theorem}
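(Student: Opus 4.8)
The plan is to reduce the statement to a purely order-topological claim about the closure of the image. Let $E$ be a chain-finite semilattice and $h\colon E\to Y$ a homomorphism into a Hausdorff topological semigroup $Y$; put $D=h[E]$ and $S=\overline{D}$. Since the idempotency set $\{x\in Y:xx=x\}$ and the commutativity set $\{(x,y):xy=yx\}$ are closed in the Hausdorff space $Y$ (being equalizers of continuous maps), a standard separate-continuity argument shows that $S$ is again a commutative semigroup of idempotents, i.e.\ a Hausdorff topological semilattice, in which $D$ is a dense subsemilattice. Moreover the natural order $a\le b\Leftrightarrow ab=a$ is a closed subset of $S\times S$, and for each $s\in S$ the principal filter $\{x\in S:x\ge s\}=\{x:xs=s\}$ is closed. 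Since $D$ is dense, proving that $h[E]$ is closed is exactly proving $D=S$. (For \emph{topological embeddings} the closedness of $D$ is already immediate from Theorem~\ref{t:C-closed}, as every semilattice is automatically nonsingular, periodic and group-bounded; the genuine content of Stepp's theorem is that the image of an \emph{arbitrary} homomorphism, which need not be a topological embedding, is closed.)

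The one structural fact I would extract from chain-finiteness is that $D$ satisfies the descending chain condition. Indeed, a strictly descending sequence $c_0>c_1>\cdots$ in $D$ lifts to one in $E$: choosing $e_n\in h^{-1}(c_n)$ and passing to the partial products $f_n=e_0e_1\cdots e_n$, one gets $h(f_n)=c_n$ and $f_0\ge f_1\ge\cdots$; as the $c_n$ are distinct, so are the $f_n$, whence $f_0>f_1>\cdots$ is an infinite chain in $E$, contradicting chain-finiteness. Thus $D$ is well-founded. Now suppose toward a contradiction that there is $y\in S\setminus D$. The set $F_y=\{d\in D:d\ge y\}$ is closed under the operation (if $d_1,d_2\ge y$ then $d_1d_2\in D$ and $y=y(d_1d_2)$, so $d_1d_2\ge y$), hence is down-directed; by the descending chain condition it has a least element $m$. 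Every element of $F_y$ lies in the closed set $\{x:x\ge m\}$, so $\overline{F_y}\subseteq\{x:x\ge m\}$. If I can show $y\in\overline{F_y}$, then $y\ge m$, and together with $m\ge y$ antisymmetry forces $y=m\in D$, the desired contradiction.

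The remaining and principal point is therefore the \emph{upward approximation} $y\in\overline{F_y}$: every neighbourhood of $y$ must contain an element of $D$ lying above $y$. This is exactly where the continuity of the meet operation and the finiteness of chains (equivalently, the order-simplicity forced on the relevant antichains) must be combined, and I expect it to be the hard part. The degenerate behaviour to rule out is that an approximating net $d_\alpha\to y$ has all its pairwise meets collapsing to a fixed element strictly below $y$ (as happens for an infinite antichain acquiring a new infimum); here I would argue that joint continuity at $(y,y)=(y\cdot y)$ drives the meets $d_\alpha d_\beta\in D$ into arbitrarily small neighbourhoods of $y$, while chain-finiteness bounds how complicated $D\cap W$ can be inside a small neighbourhood $W$, so that some element of $D\cap W$ must already dominate $y$. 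Making this precise—converting the hypothesis ``no element of $D$ above $y$ accumulates at $y$'' into an infinite strictly monotone sequence in $D$ and thence into a contradiction with chain-finiteness—is the step I anticipate will require the most care.
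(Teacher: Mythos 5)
Your reduction is set up correctly: the closure $S=\overline{h[E]}$ is indeed a Hausdorff topological semilattice with closed order and closed principal filters, your lifting of a strictly descending sequence from $D=h[E]$ to $E$ via partial products is valid, and \emph{granted} the claim $y\in\overline{F_y}$ (with $F_y=\{d\in D:d\ge y\}$), the endgame ($F_y$ down-directed, hence has a least element $m$ by DCC, hence $y\ge m\ge y$) is correct. But the proof is not complete, and you say so yourself: the claim $y\in\overline{F_y}$ --- which in particular asserts $F_y\ne\emptyset$ --- is exactly where density and chain-finiteness have to interact, i.e.\ it \emph{is} the theorem, and what you offer for it is a heuristic, not an argument. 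Note also that the paper does not prove this statement at all; it imports it from Stepp \cite{Stepp75}, so the burden of this step cannot be discharged by reference to anything internal to the paper.

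The specific mechanism you propose for the missing step does not close as described. You suggest that joint continuity at $(y,y)$ pushes pairwise meets $d_\alpha d_\beta$ into small neighbourhoods of $y$ while ``chain-finiteness bounds how complicated $D\cap W$ can be inside a small neighbourhood $W$.'' Chain-finiteness gives no such bound: it forbids \emph{infinite} chains but places no bound on the lengths of finite chains, and the natural continuity argument (choose $W_{n+1}$ with $W_{n+1}W_{n+1}\subseteq W_n$, note that $\inf(W_n\cap D)$ is attained as a finite product of elements of $W_n\cap D$, and observe that short products of elements near $y$ stay near $y$) only produces strictly decreasing chains in $D$ of arbitrarily large \emph{finite} length, which is perfectly consistent with chain-finiteness. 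Turning ``no element of $D$ above $y$ accumulates at $y$'' into a single \emph{infinite} monotone sequence requires a genuinely recursive construction interleaving choices of points of $D$ with shrinking neighbourhoods separating the accumulating meets from $y$ (this is the substance of Stepp's proof and of the Banakh--Bardyla argument in \cite{BBm}). A secondary point: you only transfer the descending chain condition from $E$ to $D$; the ascending chain condition also transfers, but this needs an extra stabilization argument (the partial products $e_n e_{n+1}\cdots e_k$ stabilize in $k$ by chain-finiteness of $E$, and the stabilized values form an ascending chain), and depending on how the main gap is filled you may well need it.
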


A semigroup $X$ is called {\em $E$-commutative} if $xy=yx$ for any idempotents $x,y\in E(X)$.

\begin{lemma}\label{l:chain-closed2} Each chain-finite group-finite bounded Clifford+finite $E$-commutative semigroup $X$ is absolutely $\mathsf{T_{\!2}S}$-closed.
\end{lemma}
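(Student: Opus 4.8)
The plan is to take an arbitrary homomorphism $h\colon X\to Y$ into a Hausdorff topological semigroup $Y\in\Haus$ and to prove that $h[X]$ is closed by analysing an arbitrary point $y\in\overline{h[X]}$. First I would record two reductions. Since $X$ is $E$-commutative, $E(X)$ is a subsemilattice of $X$, and chain-finiteness of $X$ forces the poset $E(X)$ to be chain-finite; hence $E(X)$ is a chain-finite semilattice and Theorem~\ref{t:E} (Stepp) guarantees that $h[E(X)]$ is closed in $Y$. Since $X$ is Clifford+finite, the set $F\defeq X\setminus H(X)$ is finite, so $h[F]$ is a finite, hence closed, subset of the $T_1$ space $Y$. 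As $\overline{h[X]}=h[F]\cup\overline{h[H(X)]}$, it suffices to show that every $y\in\overline{h[H(X)]}$ lies in $h[X]$.

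Next I would locate such a limit point inside a maximal subgroup of $Y$. Fix a net $(x_\alpha)$ in $H(X)$ with $h(x_\alpha)\to y$; writing $x_\alpha\in H_{e_\alpha}$ and using the bound $n\in\IN$ with $x^n\in E(X)$ for all $x\in X$, we get $x_\alpha^n=e_\alpha$ and $x_\alpha^{n+1}=x_\alpha$. Passing to limits yields $y^{n+1}=y$ and $y^{2n}=y^n$, so $y^n$ is an idempotent of $Y$; moreover $y^n=\lim h(e_\alpha)\in h[E(X)]$, so $y^n=h(e)$ for some $e\in E(X)$, and the identities $y\,y^n=y=y^n y$ and $y\,y^{n-1}=y^n$ place $y$ in the maximal subgroup $H^Y_{h(e)}$ of $Y$. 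Replacing $x_\alpha$ by $e x_\alpha e$ (whose image still tends to $h(e)\,y\,h(e)=y$) and exploiting $E$-commutativity to compute $e x_\alpha e$, I would arrange that $x_\alpha$ lies in a subgroup $H_{f_\alpha}$ with $f_\alpha\le e$ and $h(f_\alpha)\to h(e)$.

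The heart of the argument is a descent on the well-founded semilattice $E(X)$ (chain-finiteness rules out infinite chains, so $E(X)$ is well-founded). If the idempotents $\{f_\alpha\}$ take only finitely many values, then by group-finiteness the net may be assumed to stay in a single finite subgroup $H_f$; its image $h[H_f]$ is finite, hence closed, and $y\in h[H_f]\subseteq h[X]$. Otherwise $\{f_\alpha\}$ is infinite and, being an infinite subset of the chain-finite poset $E(X)$, contains an infinite antichain, along which the products $x_\alpha x_\beta$ (for $\alpha\neq\beta$) are carried by $h$ into subgroups lying over the strictly smaller idempotents $f_\alpha f_\beta$; a double-limit computation (modelled on the relation $y\cdot h(x_\beta)=\lim_\alpha h(x_\alpha x_\beta)$) then forces $y$ into the image of a strictly lower stratum of $H(X)\cup E(X)$, and well-foundedness terminates the descent in finitely many steps, at each stratum landing $y$ in the closed image of either $h[E(X)]$ or a fixed finite subgroup. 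In every case $y\in h[X]$, proving the claim. I expect the antichain step to be the main obstacle: $H(X)$ need not be a subsemigroup under mere $E$-commutativity, so one must verify carefully---using $E$-commutativity and the group inverses $x_\alpha^{n-1}$---that the correcting products remain inside controllable finite subgroups, and then organise the double limits so that group-finiteness and the well-founded induction on $E(X)$ actually close the argument.
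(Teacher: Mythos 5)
Your reductions in the first two paragraphs are sound and coincide with the paper's own setup: boundedness gives a continuous idempotent-valued map $y\mapsto y^n$ on $\overline{h[X]}$, Theorem~\ref{t:E} makes $h[E(X)]$ closed (and in fact equal to $E(\overline{h[X]})$), the finite set $F=X\setminus H(X)$ can be split off, and every $y\in\overline{h[H(X)]}\setminus h[F]$ satisfies $y^{n+1}=y$ with $y^n\in h[E(X)]$. The genuine gap is the third paragraph, i.e.\ exactly the step you flag as the main obstacle and do not carry out. Three concrete problems: (a) in a noncommutative $E$-commutative semigroup there is no reason that $x_\alpha x_\beta$ lies in the maximal subgroup over $f_\alpha f_\beta$ --- only the idempotents commute, not the group elements --- so the products need not land ``over a strictly smaller idempotent''; (b) the double-limit computation produces $y\cdot y=y^2$ (or $y\,h(x_\beta)$) as a point of the closure of the image of the lower stratum, not $y$ itself, and recovering $y$ from $y^2$ reintroduces the original net rather than descending; (c) chain-finiteness of $E(X)$ does not terminate the proposed descent: each step replaces one infinite antichain of idempotents by another infinite antichain sitting below it, and a chain-finite poset admits infinitely many such steps (it forbids infinite chains, but your process does not produce a single descending chain, and a K\"onig-type selection fails under infinite branching).

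The paper closes this case by a different, non-inductive device worth comparing with your plan. For $e=y^n\in E(Y)=h[E(X)]$ it shows ${\uparrow}e$ is open in $E(Y)$ (its complement $\{f\in E(Y):ef\ne e\}$ is a chain-finite semilattice, hence closed by Theorem~\ref{t:E}), so $U=\phi^{-1}[{\uparrow}e]$ is a neighborhood of $y$, where $\phi:y\mapsto y^n$. It then takes the \emph{minimum} idempotent $\mu$ of the subsemilattice $h^{-1}[{\uparrow}e]\cap E(X)$, uses group-finiteness to make $h[H_\mu\cup F]$ finite, and chooses a neighborhood $O_y\subseteq U$ with $eO_y\cap h[H_\mu\cup F]=\emptyset$. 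For any $x\in H(X)\cap h^{-1}[O_y]$ the minimality of $\mu$ forces $(\mu x)^n=\mu$, hence $\mu x\in H_\mu\cup F$, contradicting $h(\mu x)\in eO_y$. Multiplying by the minimal preimage idempotent $\mu$ is the idea your proposal is missing: it traps the approximating elements in a single finite set in one step, so no descent is needed.
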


\begin{proof} To show that $X$ is absolutely $\mathsf{T_{\!2}S}$-closed, take any homomorphism $h:X\to Y$ to a Hausdorff topological semigroup $Y$. We should prove that the semigroup $h[X]$ is closed in $Y$. Replacing $Y$ by $\overline{h[X]}$, we can assume that $h[X]$ is dense in $Y$. Since $X$ is bounded, there exists $n\in\mathbb N$ such that $x^n\in E(X)$ and hence $x^{2n}=x^n$ for every $x\in X$. Taking into account that $h$ is a homomorphism, we conclude that $y^{2n}=y^n$ for all $y\in h[X]$. The closed subset $\{y\in Y:y^{2n}=y^n\}$ of $Y$ contains the dense set $h[X]$ and hence coincides with $Y$. Therefore, $y^{n}\in E(Y)$ for all $y\in Y$. It follows that the continuous map $\phi:Y\to E(Y)$, $\phi:y\mapsto y^n$, is well-defined.
Consider the function $\psi:X\to E(X)$, $\psi:x\mapsto x^n$, and observe that $h\circ \psi(x)=h(x^n)=(h(x))^n=\phi\circ h(x)$ for every $x\in X$.

Since $X$ is a chain-finite $E$-commutative semigroup, the set $E(X)$ is a chain-finite subsemilattice of $X$.  By Theorem~\ref{t:E}, the chain-finite semilattice $E(X)$ is absolutely $\mathsf{T_{\!2}S}$-closed and hence its image $h[E(X)]$ is closed in the Hausdorff topological semigroup $Y$.
The continuity of the map $\phi:Y\to E(Y)$, $\phi:y\mapsto y^n$, implies that
$$h[E(X)]\subseteq E(Y)= \phi[Y]=\phi[\overline{h[X]}]\subseteq\overline{\phi[h[X]]}=\overline{h[\psi[X]]}=\overline{h[E(X)]}=h[E(X)].$$ Hence $h[E(X)]=E(Y)=\phi[Y]$.
The choice of $n$ implies that $x=x^{n+1}$ for all $x\in H(X)$. Since $X$ is Clifford+finite, the set $F=X\setminus H(X)$ is finite. Then $Y=\overline{h[X]}=\overline{h[H(X)\cup F]}=\overline{h[H(X)]}\cup h[F]$. By the Hausdorff property of $Y$, the set $\{y\in Y:y=y^{n+1}\}\supseteq h[H(X)]$ is closed in $Y$ and contains the set $\overline{h[H(X)]}=\overline{h[X\setminus F]}\supseteq\overline{h[X]}\setminus h[F]=Y\setminus h[F]$. Then $y^{n+1}=y$ for any $y\in Y\setminus h[F]$.

Assuming that $h[X]$ is not closed in $Y$, take any point $y\in Y\setminus h[X]\subseteq Y\setminus h[F]$ and consider the idempotent $e=y^n=\phi(y)\in \phi[Y]=E(Y)=h[E(X)]$. Since the semilattice $E(X)$ is chain-finite, we can apply Theorem~\ref{t:mainP} and conclude that the semilattice $h[E(X)]=E(Y)$ is chain-finite and so is the subsemilattice $L=\{f\in E(Y):ef\ne e\}$ of $E(Y)$. By Theorem~\ref{t:E}, $L$ is closed in $E(Y)$. Then its complement ${\uparrow}e=\{f\in E(Y):ef=e\}$ is open in $E(Y)$ and its preimage $U=\phi^{-1}[{\uparrow}e]$ is an open neighborhood of $y$ in $Y$. Since $e\in h[E(X)]$ and the semilattice $E(X)$ is chain-finite, the nonempty subsemilattice $h^{-1}[{\uparrow}e]\cap E(X)$ has a unique minimal element $\mu\in h^{-1}(e)$. Since the semigroup $X$ is group-finite, the maximal subgroup $H_{\mu}$ is finite and so is the set $h[H_{\mu}\cup F]$. Since $y=y^{n+1}=ey\notin h[H_{\mu}\cup F]$, there exists a neighborhood $O_y\subseteq U$ of $y$ in $Y$ such that $O_y^{n+1}\subseteq U\setminus h[H_{\mu}\cup F]$. It implies that $eO_y\cap h[H_{\mu}\cup F]=\emptyset$, as $eO_y\subset O_y^{n+1}$. Since $\phi(ey)=(ey)^n=(y^{n+1})^n=(y^n)^{n+1}=e^{n+1}=e\in {\uparrow}e$, we can additionally assume that  $\phi[eO_y]\subseteq {\uparrow}e$.

Since
$$y\in Y\setminus h[X]=(\overline{h[H(X)]}\cup h[F])\setminus h[X]=\overline{h[H(X)]}\setminus h[X],$$ we can choose an element $x\in H(X)\cap h^{-1}[O_y]$ and observe that $h((\mu x)^n)=(h(\mu)h(x))^n=\phi(eh(x))\in\phi[eO_y]\subseteq {\uparrow}e$ and hence $(\mu x)^n\in E(X)\cap h^{-1}[{\uparrow}e]$. The minimality of $\mu$ in $E(X)\cap h^{-1}[{\uparrow}e]$ ensures that $\mu\le(\mu x)^n$. On the other hand, $\mu(\mu x)^n=(\mu x)^n$ implies that $(\mu x)^n\le\mu$ and hence $\mu=(\mu x)^n$ and $\mu x\in H_\mu\cup F$. On the other hand, $h(\mu x)=h(\mu)h(x)\in eO_y$ and hence $h(\mu x)\in h[H_{\mu}\cup F]\cap eO_y=\emptyset$. This contradiction shows that the set $h[X]$ is closed in $Y$.
\end{proof}

\section{Bounded sets in absolutely $\mathsf{T_{\!z}S}$-closed semigroups}\label{s:bounded}

In this section, given an absolutely $\mathsf{T_{\!z}S}$-closed semigroup $X$, we characterize subsets $A\subseteq V\!E(X)$ for which the set $Z(X)\cap\korin{\IN}{\!A}$ is bounded in $X$. We recall that a subset $B\subseteq X$ is {\em bounded} if $B\subseteq\korin{n}{E(X)}\defeq\{x\in X:x^n\in E(X)\}$ for some $n\in\IN$.

The following notion plays a crucial role in our subsequent results.

\begin{definition} A semigroup $X$ is defined to be {\em $A$-centrobounded} over a set $A\subseteq E(X)$ if there exists $n\in\IN$ such that for every $e\in A$ and $x,y\in \bigcap_{a\in A}\frac{H_a}a$ with $(xe)(ye)^{-1}\in H_e\cap Z(X)$ we have $((xe)(ye)^{-1})^n\in E(X)$.
\end{definition}

In the following theorem we endow the set $V\!E(X)$ with the natural partial order $\le$ considered in Section~\ref{s:order}. A subset $A\subseteq V\!E(X)$ is called an {\em antichain} if $x\not\le y$ for any distinct elements $x,y\in A$.

\begin{theorem}\label{t:bounded} Let $X$ be an absolutely $\mathsf{T_{\!z}S}$-closed semigroup. For a subset $A\subseteq V\!E(X)$ the following conditions are equivalent:
\begin{enumerate}
\item $Z(X)\cap \korin{\IN}{\!A}$ is bounded in $X$;
\item $Z(X)\cap H(X)\cap\korin{\IN}{\!A}$ is bounded in $X$;
\item $X$ is $B$-centrobounded over every countable  infinite antichain $B\subseteq A$.
\end{enumerate}
\end{theorem}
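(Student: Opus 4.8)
The plan is to run the cycle $(1)\Leftrightarrow(2)\Leftrightarrow(3)$, settling first the cheap equivalence $(1)\Leftrightarrow(2)$ and the implication $(2)\Rightarrow(3)$, and then to concentrate on $(3)\Rightarrow(2)$, which carries all the difficulty. Throughout I will use that an absolutely $\mathsf{T_{\!z}S}$-closed semigroup is also projectively, ideally and plainly $\mathsf{T_{\!z}S}$-closed, so that Theorems~\ref{t:Z},~\ref{t:G},~\ref{t:ideal} and~\ref{t:order} all apply. For $(1)\Leftrightarrow(2)$: the implication $(1)\Rightarrow(2)$ is trivial, since $Z(X)\cap H(X)\cap\korin{\IN}{\!A}\subseteq Z(X)\cap\korin{\IN}{\!A}$ and subsets of bounded sets are bounded. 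For $(2)\Rightarrow(1)$, Theorem~\ref{t:ideal} makes $Z(X)\cap\korin{\IN}{V\!E(X)}\setminus H(X)$ finite, hence so is its subset $Z(X)\cap\korin{\IN}{\!A}\setminus H(X)$ (as $A\subseteq V\!E(X)$); by the periodicity of $Z(X)$ (Theorem~\ref{t:Z}(1)) a finite subset of $Z(X)$ is bounded (if $z^m\in E(X)$ then $z^{km}\in E(X)$, so a common exponent exists), and $Z(X)\cap\korin{\IN}{\!A}$ is the union of this finite set with the bounded set in (2).

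For $(2)\Rightarrow(3)$, fix a countable infinite antichain $B\subseteq A$, put $C=\bigcap_{b\in B}\frac{H_b}{b}$, and let $n$ witness (2). Given $e\in B$ and $x,y\in C\subseteq\frac{H_e}{e}$ with $w:=(xe)(ye)^{-1}\in H_e\cap Z(X)$, periodicity gives $w^m=e\in A$ for some $m$, so $w\in Z(X)\cap H_e$. Now note the useful identity $Z(X)\cap H(X)\cap\korin{\IN}{\!A}=\bigcup_{f\in A}(Z(X)\cap H_f)$: indeed any $z$ in the left side lies in some $H_f$ with $z^m\in A$ idempotent, forcing $z^m=f\in A$, and conversely each $z\in Z(X)\cap H_f$ with $f\in A$ is periodic. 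Hence $w\in Z(X)\cap H(X)\cap\korin{\IN}{\!A}$, which is bounded by $n$, so $w^n\in E(X)$. Thus $X$ is $B$-centrobounded; observe the conclusion used only the membership of $w$, not the way $x,y$ range.

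The crux is $(3)\Rightarrow(2)$, which I would prove contrapositively. Assume $Z(X)\cap H(X)\cap\korin{\IN}{\!A}$ is unbounded. For each $f\in A\subseteq V\!E(X)$ we have $Z(X)\cap H_f\subseteq Z(H_f)$, which is finite by Theorem~\ref{t:G}(2); so each group $Z(X)\cap H_f$ is finite and, by the displayed identity, unboundedness means the exponents $\exp(Z(X)\cap H_f)$ are unbounded over $f\in A$. Pick $f_k\in A$ with $\exp(Z(X)\cap H_{f_k})\ge k$; these are infinitely many distinct viable idempotents. Since $V\!E(X)$ is chain-finite (Theorem~\ref{t:order}(2)), it contains no infinite chain, so the infinite Ramsey theorem applied to the comparability relation on $\{f_k\}$ yields an infinite antichain $B=\{b_k:k\in\IN\}\subseteq A$ along which the exponents remain unbounded; for each $k$ choose $w_k\in Z(X)\cap H_{b_k}$ of order exceeding $k$.

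It remains to show $X$ is not $B$-centrobounded, i.e.\ that for every $n$ some central ratio $(xb_k)(yb_k)^{-1}$ with $x,y\in C=\bigcap_j\frac{H_{b_j}}{b_j}$ has order $>n$; this is the main obstacle. The naive witness $w_k$ is \emph{not} available through $C$: projecting to the semilattice reflection via $q\colon X\to X/_{\Updownarrow}$, the restriction $q\restriction_{V\!E(X)}$ is an order embedding (Proposition~\ref{p:VE-order}), so $q(b_k),q(b_j)$ are incomparable for $j\ne k$ and $q(w_kb_j)=q(b_k)q(b_j)<q(b_j)$, whence $w_kb_j\notin H_{b_j}$ and $w_k\notin\frac{H_{b_j}}{b_j}$; moreover $w_k$ lies in the ideal $X\setminus\frac{H_{b_j}}{b_j}$, so multiplying any $y\in C$ by $w_k$ ejects it from $C$. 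Thus the real content is to realise the finite central subgroup $Z(X)\cap H_{b_k}$ (which has exponent $>n$ for large $k$) inside the ratio set of the projection $\pi_k\colon C\to H_{b_k}$, $x\mapsto xb_k$. I expect to resolve this by invoking absolute $\mathsf{T_{\!z}S}$-closedness: if for the extracted antichain the central elements of $H_{b_k}$ of large order were \emph{not} captured as such ratios (so that $X$ were $B$-centrobounded despite $\exp(Z(X)\cap H_{b_k})\to\infty$), then these ``unliftable'' central elements are independent of $C$ and can be used to build a continuous homomorphism of $X$ into a Hausdorff zero-dimensional semigroup with non-closed image, contradicting absolute $\mathsf{T_{\!z}S}$-closedness. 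Either way $X$ fails to be $B$-centrobounded, which negates (3). Constructing this witnessing homomorphism—equivalently, proving the lifting of large-order central elements into $C$—is the step I anticipate to be hardest and the one where the full strength of absolute closedness is needed.
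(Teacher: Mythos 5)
Your treatment of $(1)\Leftrightarrow(2)$ and $(2)\Rightarrow(3)$ is correct and essentially identical to the paper's: the equivalence of (1) and (2) comes from Theorem~\ref{t:ideal} plus periodicity of $Z(X)$, and $(2)\Rightarrow(3)$ is the observation that every central ratio $(xe)(ye)^{-1}$ lands in $Z(X)\cap H(X)\cap\korin{\IN}{\!A}$. Your extraction of an infinite antichain $\{b_k\}$ carrying central elements $w_k\in Z(X)\cap H_{b_k}$ of unbounded order also matches the paper's Claim on the sequence $(z_k)_{k\in\w}$ (the paper gets it from group-finiteness and chain-finiteness of $Z(X)$ via Ramsey's theorem, much as you do).

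However, the implication $(3)\Rightarrow(2)$ is not proved: your last paragraph explicitly defers the one step that constitutes the entire content of this direction. Moreover, the route you sketch --- ``lift the large-order central elements $w_k$ into the ratio set of $C\to H_{b_k}$, thereby refuting $B$-centroboundedness directly from the failure of (2)'' --- is not what the paper does, and there is no indication such a lifting is available; as you yourself note, $w_k\notin\bigcap_j\frac{H_{b_j}}{b_j}$. The paper instead argues by contradiction from \emph{both} hypotheses at once: assuming (3) holds and (2) fails, it forms the ideal $I=X\setminus\bigcup_n\frac{H_{e_n}}{e_n}$, puts a $0$-discrete zero-dimensional semigroup topology $\tau$ on $Y=X/I$ whose basic neighborhoods of $0$ are built from the blocks $\bigcup_{2\le p\le\lfloor 2^n/k\rfloor}z_n^pC$ (this is where the condition $z_n^i\ne z_n^j$ for $i,j\le 2^n$ is needed), and then adjoins new points $yz_\F$ ($y\in C$, $\F$ a free ultrafilter) to obtain a larger Hausdorff zero-dimensional topological semigroup $(T,\theta)$ in which $Y$ is dense but not closed --- contradicting the absolute $\mathsf{T_{\!z}S}$-closedness of $X$. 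Crucially, the centroboundedness hypothesis (3) is \emph{used} inside this construction (to show that the adjoined points $yz_\F$ can be separated from $0$, i.e., that $\theta$ is Hausdorff); it is not the target of a direct refutation. Without this construction, or the lifting argument you hope for, your proof of $(3)\Rightarrow(2)$ has no content, so the proposal as it stands is incomplete at exactly the point where the theorem is hard.
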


\begin{proof} Replacing the semigroup $X$ by its $1$-extension $X^1$, we lose no generality assuming that the semigroup $X$ contains a two-sided unit $1$. By Theorem~\ref{t:Z}(1,2), the semigroup $Z(X)$ is chain-finite, periodic, nonsingular, and group-finite.

The equivalence $(1)\Leftrightarrow(2)$ follows from  Theorem~\ref{t:ideal} and $(2)\Ra(3)$ is trivial. Indeed, by (2), there exists $n\in\IN$ such that $Z(X)\cap H(X)\cap\korin{\IN}{A}\subseteq \korin{n}{E(X)}$. We claim that the number $n$ witnesses that $X$ is $B$-centrobounded over any set $B\subseteq A$. Indeed, given any idempotent $e\in B$ and elements $x,y\in \bigcap_{b\in B}\frac{H_b}b$ with $(xe)(ye)^{-1}\in Z(X)$, by the periodicity of $Z(X)$ and the choice of $n$, we have $$(xe)(ye)^{-1}\in Z(X)\cap H_e\cap\korin{\IN}{e}\subseteq Z(X)\cap H(X)\cap\korin{\IN}{A}\subseteq \korin{n}{E(X)}$$ and hence $((xe)(ye)^{-1})^n\in E(X)$.

It remains to prove the implication $(3)\Ra(2)$. Let $\pi:\korin{\IN}{\!E(X)}\to E(X)$ be the map assigning to each element $x\in\korin{\IN}{\!E(X)}$ a unique idempotent $\pi(x)$ in the monogenic semigroup $x^\IN\defeq\{x^n:n\in\IN\}$. To derive a contradiction, assume that the condition (3) is satisfied but (2) does not.

\begin{claim}\label{cl:antichain} There exists a sequence $(z_k)_{k\in\w}$ in $Z(X)\cap H(X)\cap\korin{\IN}{\!A}$ such that
\begin{enumerate}
\item $\pi(z_k)\not\le \pi(z_n)$ for any distinct numbers $k,n\in\w$;
\item $z_k^i\ne z_k^j$ for any $k\in\w$ and distinct numbers $i,j\in\{1,\dots,2^k\}$.
\end{enumerate}
\end{claim}

\begin{proof} Since the set $Z(X)\cap H(X)\cap\korin{\IN}{\!A}$ is unbounded in $X$, for every $k\in\w$ there exists an element $g_k\in Z(X)\cap H(X)\cap \korin{\IN}{\!A}$ such that for any distinct positive numbers $i,j\le 2^k$ we have $g_k^i\ne g_k^j$. Let $[\w]^2$ be the family of two-element subsets of $\w$. Consider the function $\chi:[\w]^2\to\{0,1,2\}$ defined by
$$\chi(\{n,m\})=\begin{cases}0&\mbox{if $\pi(g_n)=\pi(g_m)$};\\
1&\mbox{if $\pi(g_n)<\pi(g_m)$ or $\pi(g_m)<\pi(g_n)$};\\
2&\mbox{otherwise}.
\end{cases}
$$
By the Ramsey Theorem 5 \cite{Ramsey}, there exists an infinite set $\Omega\subseteq \w$ such that $\chi[[\Omega]^2]=\{c\}$ for some $c\in\{0,1,2\}$. If $c=0$, then the set $\{\pi(g_n)\}_{n\in\Omega}$ contains a unique idempotent $u$ and hence the set $\{g_k\}_{k\in\Omega}\subseteq Z(X)\cap H_u$ is finite (since $Z(X)$ is periodic and group-finite). By the Pigeonhole Principle, for any $k>|Z(X)\cap H_u|$ there are two numbers $i<j\le k$ such that $g_k^i=g_k^j$, which contradicts the choice of $g_k$.  Therefore, $c\ne 0$. If $c=1$, then the set $\{\pi(g_k)\}_{k\in\Omega}$ is an infinite chain in $E(X)\cap Z(X)$ which is not possible as $Z(X)$ is chain-finite. Therefore, $c=2$ and hence $\{\pi(g_k)\}_{k\in\Omega}$ is an infinite antichain in $E(Z(X))$. Write the infinite set $\Omega$ as $\{n_k\}_{k\in\w}$ for some strictly increasing sequence $(n_k)_{k\in\w}$. For every $k\in\w$ put $z_k=g_{n_k}$ and observe that the sequence $(z_k)_{k\in\w}$ satisfies the conditions (1), (2) of Claim~\ref{cl:antichain}.
\end{proof}

Let $q:X\to X/_{\Updownarrow}$ be the quotient homomorphism of $X$ onto its semilattice reflection.

Let $(z_n)_{n\in\w}$ be the sequence from Claim~\ref{cl:antichain}. For every $n\in\w$ let $e_n=\pi(z_n)$. The inclusion $z_n\in Z(X)\cap \korin{\IN}{\!A}$ and the periodicity of $Z(X)$ imply that the idempotent $e_n=\pi(z_n)\in Z(X)\cap A\subseteq V\!E(X)$ is viable. Then the set $$\tfrac{H_{e_n}}{e_n}=\{x\in X:xe_n=e_nx\in H_{e_n}\}$$ is a prime coideal in $X$ and moreover, $\frac{H_{e_n}}{e_n}=q^{-1}[{\uparrow}q(e_n)]$, see Proposition 2.15 in \cite{BanE}.

Since the semigroup $X$ is absolutely $\mathsf{T_{\!z}S}$-closed, for the ideal $$\textstyle I\defeq X\setminus\bigcup_{n\in\w}\tfrac{H_{e_n}}{e_n}=X\setminus q^{-1}\big[\bigcup_{n\in\w}{\uparrow}q(e_n)\big]$$ in $X$, the quotient semigroup $X/I$ is absolutely $\mathsf{T_{\!z}S}$-closed.

For convenience, by $0$ we denote the element $I\in X/I$. The injectivity of the restriction $q{\restriction}_{V\!E(X)}$ implies that $e_ne_m\in I$ for any distinct $n,m\in\w$. This implies that the ideal $I$ is not empty and the element $0=I$ of the semigroup $X/I$ is well-defined.

Now we introduce a $0$-discrete Hausdorff semigroup topology $\tau$ on the semigroup $Y\defeq X/I$.

Fix any free ultrafilter $\mathcal{F}$ on $\mathbb{N}$.
Let
$$Q=\{y\in X/_{\Updownarrow}:\exists F\in\mathcal{F} \;\forall n\in F\;\; q(e_n)\leq y\}.$$
Note that the set $Q$ is nonempty, as $q(1)\in Q$ (we assumed that $X$ contains a unit exactly to omit the easier case $Q=\emptyset$).

For any $y_1,y_2\in Q$ there exist $F_1, F_2\in\mathcal{F}$ such that $q(e_n)\leq y_i$ for each $n\in F_i$, $i\in\{1,2\}$.
Then $q(e_n)\leq y_1y_2$ for each $n\in F_1\cap F_2\in \mathcal{F}$. It follows that $Q$ is a subsemilattice of $X/_{\Updownarrow}$. By Lemma~\ref{l:pc=>cf}, the semilattice $X/_{\Updownarrow}$ is chain-finite and so is its subsemilattice $Q$. Thus, the semilattice $Q$ contains the smallest element $s$. Since $s\in Q$, there exists a set $F_s\in\mathcal{F}$ such that $q(e_n)\leq s$ for all $n\in F_s$.  Consider the prime coideal
$$C\defeq \bigcap_{n\in F_s}\tfrac{H_{e_n}}{e_n}.$$

\begin{claim} ${\uparrow}s=\bigcap_{n\in F_s}{\uparrow}q(e_n)$ and hence $q^{-1}[{\uparrow}s]=C$.
\end{claim}

\begin{proof} The inclusion ${\uparrow}s\subseteq\bigcap_{n\in F_s}{\uparrow}q(e_n)$ follows from the choice of $F_s$. Now take any $y\in\bigcap_{n\in F_s}{\uparrow}q(e_n)$ and observe that $\{n\in\w:q(e_n)\le y\}\in \F$ and hence $y\in Q$ and $s\le y$ by the choice of $s$.
Then $$q^{-1}[{\uparrow}s]=q^{-1}[\bigcap_{n\in F_s}{\uparrow}q(e_n)]=\bigcap_{n\in F_s}q^{-1}[{\uparrow}q(e_n)]=\bigcap_{n\in F_s}\tfrac{H_{e_n}}{e_n}=C.$$
\end{proof}

To introduce the topology $\tau$ on $Y$, we need the following denotations.
For a real number $r$ by $\lfloor r\rfloor\defeq\max\{n\in\IZ:n\le r\}$ we denote the integer part of $r$. For each $k\in \mathbb{N}$ and $n\in F_s$ let $$A(n,k)=\bigcup_{2\le p\leq\lfloor 2^n/k\rfloor}z_n^pC\subseteq H_{e_n}.$$The definition of the set $A(n,k)$ implies that $A(n,k)=\emptyset$ if $2^n/k<2$.
For every $k\in\IN$ and $F\in\F$ consider the subset $$U_{k,F}\defeq\{0\}\cup\bigcup_{n\in F\cap F_s}A(n,k)$$
of $Y$.
On the semigroup $Y=X/I$ consider the topology $\tau$ generated by the base $$\big\{\{y\}:y\in Y\setminus \{0\}\big\}\cup\{U_{k,F}:k\in\IN,\;F\in\F\}.$$

\begin{claim} $(Y,\tau)$ is a Hausdorff zero-dimensional topological semigroup.
\end{claim}

\begin{proof} To see that the topology $\tau$ is Hausdorff, it suffices to show that for any $y\in Y\setminus\{0\}$ there exist $k\in\IN$ and $F\in\F$ such that $y\notin U_{k,F}$.
If $y\notin\bigcup_{n\in F_s}H_{e_n}$, then $y\notin U_{1,F_s}$. If $y\in H_{e_n}$ for some $n\in F_s$, then $y\notin U_{1,F_s\setminus\{n\}}$. Therefore, the topology $\tau$ is Hausdorff. Since $0$ is a unique non-isolated point of $(Y,\tau)$, the topology $\tau$ is zero-dimensional.

It remains to prove that $(Y,\tau)$ is a topological semigroup. Given points $y,y'\in Y$ and a neighborhood $O_{yy'}\in\tau$ of their product $yy'$ in $Y$, we need to find neighborhoods $O_y,O_{y'}\in\tau$ of $y,y'$, respectively, such that $O_yO_{y'}\subseteq O_{yy'}$. If $y,y'\in Y\setminus \{0\}$, then the neighborhoods $O_{y}=\{y\}$ and $O_{y'}=\{y'\}$ have the required property: $O_yO_{y'}=\{yy'\}\subseteq O_{yy'}$.

So, it remains to consider three cases:
\begin{enumerate}
\item[1)] $y\ne 0$ and $y'=0$;
\item[2)] $y=0$ and $y'\ne0$;
\item[3)] $y=0=y'$.
\end{enumerate}
In each of these cases, $yy'=0$, so we can find $F\in\F$ and $k\in\IN$ such that $F\subseteq F_s$ and $U_{k,F}\subseteq O_{yy'}$.
\smallskip

1) Assume that $y\ne 0$ and $y'=0$. If $y\in C$, then
\begin{multline*}
yU_{k,F}=\{0\}\cup\bigcup_{n\in F}yA(n,k)=\{0\}\cup\bigcup_{n\in F}\bigcup_{2\le p\le\lfloor 2^n/k\rfloor}yz_n^pC=\\ \{0\}\cup\bigcup_{n\in F}\bigcup_{2\le p\le\lfloor 2^n/k\rfloor}z_n^pyC\subseteq \{0\}\cup\bigcup_{n\in F}\bigcup_{2\le p\le\lfloor 2^n/k\rfloor}z_n^pC=U_{k,F}\subseteq O_{yy'}.
\end{multline*}
So, we can put $O_y=\{y\}$ and $O_{y'}=U_{k,F}$.

If $y\notin C$, then $q(y)\notin Q$ and hence the set $\{n\in\IN:q(e_n)\le q(y)\}$ does not belong to the ultrafilter $\F$ and then the set $G\defeq\{n\in F_s:q(e_n)\not\le q(y)\}$ belongs to the ultrafilter $\F$. For every $n\in G$ and $p\in\IN$ we have $q(yz_n^p)=q(y)q(e_n)\ne q(e_n)$, implying  $yz_n^p\in I$ and  $yU_{1,G}=\{0\}$. So we can put $O_y=\{y\}$ and $O_{y'}=U_{1,G}$.
\smallskip

2) The case $y=0$ and $y'\ne0$ can be treated by analogy with the preceding case.
\smallskip

3) If $y=0=y'$, then we can put $O_y=O_{y'}=U_{4k,F}$. Let us show that $O_yO_{y'}\subseteq U_{k,F}$. Indeed, take any $a,b\in U_{4k,F}$. If $0\in\{a,b\}$ or $a$ and $b$ do not belong to the same subgroup $H_{e_n}$, then $ab=0\in U_{k,F}$. Otherwise, there exists $n\in\IN$ such that $a,b\in A(n,4k)$ and hence $a\in z_n^mC$ and $b=z_n^tC$ for some numbers $m,t\in\{2,\dots,\lfloor 2^n/4k\rfloor\}$. Then $2\le \lfloor 2^n/4k\rfloor\le 2^n/4k$ and hence
$$2\le m+t\le 2\lfloor 2^n/4k\rfloor\le 2^n/2k=2^n/k-2^n/2k\le 2^n/k-4\le \lfloor 2^n/k\rfloor.$$
Taking into account that $z_n\in Z(X)$, we obtain $ab\in z_n^mCz_n^tC\subseteq z_n^{m+t}C\subseteq A(n,k)\subseteq U_{k,F}$.
\end{proof}

Note that $(Y,\tau)$, being a  continuous homomorphic image of the absolutely  $\mathsf{T_{\!z}S}$-closed semigroup $X$, is itself $\mathsf{T_{\!z}S}$-closed. But, as we will show further, this is not the case.

Let $z_{\F}$ be the ultrafilter on $Y$ generated by the base $\{z_F:F\in\F\}$ where $z_F\defeq\{z_n:n\in F\}$ for $F\in\F$. Note that for any $y\in Y$ the filter $y z_\F$ generated by the base $\{yG: G\in z_\F\}$ is an ultrafilter on $Y $. Also, since $\{z_n:n\in\IN\}\subseteq Z(X)$ we get that $yz_\F=z_\F y$, where $z_\F y$ is the ultrafilter generated by the base $\{Gy: G\in z_\F\}$.

\begin{claim} For any $y\in Y\setminus C$ the ultrafilter $y z_\F$ is the principal ultrafilter at $0$.
\end{claim}

\begin{proof}  The claim is obvious if $y=0$. So, assume that $y\in Y\setminus\{0\}=X\setminus I$. Since $y\in Y\setminus C$, the set $\{n\in\IN:q(e_n)\le q(y)\}$ does not belong to the ultrafilter $\F$. Then the set $F=\{n\in F_s:q(e_n)\not\le q(y)\}$ belongs to $\F$. Now observe that $yz_F\subseteq I$ and hence $yz_F=\{0\}$. Therefore, the ultrafilter $yz_\F$ is principal at $0$.
\end{proof}

\begin{claim}\label{cl:distinct} There exists $m\in\IN$ such that $U_{m,F_s}\notin y {x}_\F$ for every $y\in C$.
\end{claim}

\begin{proof} Since $X$ is $\{e_n\}_{n\in F_s}$-centrobounded, there exists $m\in\IN$ such that for every $n\in F_s$ and $x,y\in C=\bigcap_{k\in F_s}\frac{H_{e_k}}{e_k}$ with $(xe_n)(ye_n)^{-1}\in Z(X)$ we have $\big((xe_n)(ye_n)^{-1}\big)^m\in E(X)$.

We claim that for every $y\in C$, the set $U_{m,F_s}$ does not belong to the ultrafilter $yz_\F$. In the opposite case, the set $U_{m,F_s}$ has non-empty intersection with the set $yz_{F_s}$. Then there exists $n\in F_s$ such that $yz_n\in U_{m,F_s}$ and hence $z_ny=yz_n=z_n^pc$ for some $c\in C$ and $p\in\IN$ with $2\le p\le\lfloor 2^n/m\rfloor$.  It follows from $c,y\in C\subseteq \frac{H_{e_n}}{e_n}$ that $ce_n,ye_n\in H_{e_n}$. Then the equality $z_ny=z_n^pc$ implies $z_nye_n=z_n^pce_n\in H_{e_n}$ and hence $(ce_n)(ye_n)^{-1}=z_n^{-(p-1)}\in Z(X)\cap H_{e_n}$.
Now the choice of $m$ ensures that $((ce_n)(ye_n)^{-1})^m=e_n$. Then $$z_n^m=(z_n^p(ce_n)(ye_n)^{-1})^m=z_n^{pm}((ce_n)(ye_n)^{-1})^m=z_n^{pm}e_n=z_n^{pm},$$
which contradicts the choice of the point $z_n$ in Claim~\ref{cl:antichain}(2) as $m<pm\le\lfloor 2^n/m\rfloor m\le 2^n$.
\end{proof}

Let $T=Y\cup \{yz_\F:y\in C\}$.
Extend the semigroup operation from $Y$ to the set $T$ by the formula:
$$ab=
\begin{cases}
ab&\hbox{ if $a,b\in Y$};\\
ayz_\F&\hbox{ if  $a\in C$ and $b=yz_\F$ for some $y\in C$};\\
ybz_\F&\hbox{ if  $a=yz_\F$ for some $y\in C$ and $b\in C$};\\
0&\mbox{ in all other cases}.
\end{cases}
$$
Let $\theta$ be the topology on the semigroup $T$ which satisfies the following conditions:
\begin{itemize}
\item $(Y,\tau)$ is an open subspace of $(T,\theta)$;
\item if $yz_\F\in U\in \theta$ for some $y\in C$, then there exists $F\in\F$ such that $yz_F\subseteq U$.
\end{itemize}

\begin{claim}  The topology $\theta$ on $T$ is Hausdorff and zero-dimensional.
\end{claim}

\begin{proof} First we show that the topological space $(T,\theta)$ is zero-dimensional. Given an open set $U\in\theta$ and a point $u\in U$, we need to find a clopen set $V$ in $(T,\theta)$ such that $u\in V\subseteq U$. We consider three possible cases.
\smallskip

1. If $u\in Y\setminus \{0\}$, then $u$ is an isolated point of $Y$ and $T$ and we can put $V=\{u\}$. The definition of the topology $\theta$ ensures that $V=\{u\}$ is a clopen neighborhood of $u$ in $(T,\theta)$.
\smallskip

2. If $u=0$, then we can apply Claim~\ref{cl:distinct} and find $m\in\IN$ and $F\in\F$ such that $U_{m,F}\subseteq U$ and  $U_{m,F}\notin yz_\F$ for all $y\in C$. The definition of the topology $\theta$ ensures that $V=U_{m,F}$ is a clopen neighborhood of $u=0$ in $(T,\theta)$.
\smallskip

3. If $u=yz_\F$ for some $y\in C$, then by the definition of the topology $\theta$, there exists $F\in\F$ such that $F\subseteq F_s$ and $yx_F\subseteq U$. Moreover, by Claim~\ref{cl:distinct}, we can assume that $yx_F\cap U_{m,F_s}=\emptyset$ for some $m\in\IN$. By the definition of the topology $\theta$, the set $V\defeq\{yz_\F\}\cup yx_F\subseteq U$ is a neighborhood of $u=yz_\F$ in $(T,\theta)$. It remains to show that the set $V$ is closed in $(T,\theta)$. Given any $t\in T\setminus V$, we should find a neighborhood $O_t\in\theta$ of $t$ such that $O_t\cap V=\emptyset$. If $t\in Y\setminus\{0\}$, then the neighborhood $O_t\defeq \{t\}\in\theta$ of $t$ is disjoint with $V$. If $t=0$, then the neighborhood $O_t\defeq U_{m,F_s}$  of $t=0$ is disjoint with $V$. Finally assume that $t=c z_\F$ for some $c\in C$. Consider the set $E=\{n\in F:yz_n=c z_n\}$. If $E\in\F$, then $u=yz_\F=c z_\F=t$, which contradicts the choice of $t\notin V$. Therefore, $E\notin\F$ and the set $G\defeq F\setminus E=\{n\in F:yz_n\ne c z_n\}$ belongs to the ultrafilter $\F$. Then $O_t=\{t\}\cup c z_{G}$ is a neighborhood of $t=c z_\F$ such that $O_t\cap V=\emptyset$, witnessing that the set $V$ is clopen.

Therefore the topology $\theta$ is zero-dimensional and being $T_1$, it is Hausdorff.
\end{proof}

To check the continuity of the semigroup operation in $(T,\theta)$, take any elements $a,b\in T$ and choose any neighborhood $O_{ab}\in\theta$ of their product $ab$. We should find neighborhoods $O_a,O_b\in\theta$ of $a,b$ such that $O_aO_b\subseteq O_{ab}$.
If $a,b\in Y$, then such neighborhoods exist by the continuity of the semigroup operation in the topological semigroup $(Y,\tau)$.

So, it remains to consider three cases:
\smallskip

1. $a\in Y$ and $b=yz_\F$ for some $y\in C$. This case has three subcases.
\smallskip

1a) $a\in C$. In this case there exists a set $F\in\F$ such that $ayx_F\subseteq O_{ab}$, and then the neighborhoods $O_a=\{a\}$ and $O_b=yx_F\cup\{yx_{\F}\}$ have the required property   $O_aO_b=ayx_F\cup\{ayx_{\F}\}\subseteq O_{ab}$.
\smallskip

1b) $a\in Y\setminus (\{0\}\cup C)=X\setminus(I\cup C)$.
Since $a\notin C$, the set $\{n\in \w:q(e_n)\le q(a)\}$ does not belong to the ultrafilter $\F$ and hence the set $F\defeq\{n\in F_{s}:q(e_n)\not\le  q(a)\}$ belongs to $\F$. Then for the neighborhoods $O_a=\{a\}$ and $O_b=yz_F\cup\{yz_\F\}$ we have $O_aO_b=\{0\}=\{ab\}\subseteq O_{ab}$.
\smallskip

1c) $a=0$. In this case $ab=0$ and  we can find $k\in\IN$ and $F\in\F$ such that $U_{k,F}\subseteq O_{ab}$. We claim that the neighborhoods $O_a=U_{2k,F}$ and $O_b=yz_{F}\cup\{yz_{\F}\}$ satisfy $O_aO_b\subseteq O_{ab}$. Given any elements $a'\in O_a$ and $b'\in O_b$, we should check that $a'b'\in O_{ab}$. This is clear if $a'=0$. If $a'\ne0$, then $a'=z_n^py'$ for some $n\in F$, $2\le p\le \lfloor 2^n/2k\rfloor$ and $y'\in C$. If $b'=yz_{\F}$, then $a'b'=0\in O_{ab}$. So, assume that $b'=yz_{n'}$ for some $n'\in F$. If $n\ne n'$, then $a'b'=z_n^py'yz_{n'}\in I$ and hence $a'b'=0=ab\in O_{ab}$. Suppose that $n=n'$. Then $a'b'=z_n^py'yz_n=z_n^{p+1}y'y$ and $$2\le p+1\le \lfloor 2^n/2k\rfloor+1\le 2^n/2k+1=2^n/k-2^n/2k+1\le 2^n/k-2+1\le\lfloor 2^n/k\rfloor$$ as $2\le p\le \lfloor 2^n/2k\rfloor\le 2^n/2k$.
\smallskip

2. $a=yz_\F$ for some $y\in C$ and $b\in Y$. This case can be considered by analogy with the preceding case.
\smallskip

3. $a=yz_\F$ and $b=y'z_\F$ for some $y,y'\in C$. In this case $ab=0$ and we can find $k\in\IN$ and $F\in\F$ such that $U_{k,F}\subseteq O_{ab}$. Let $E=\{n\in F\cap F_s:2\le \lfloor 2^n/k\rfloor\}$. Then the neighborhoods $O_a=yz_E\cup\{yz_{\F}\}$ and $O_b=y'z_E\cup\{y'z_\F\}$ have the required property: $O_aO_b\subseteq U_{k,F}\subseteq O_{ab}$.
\smallskip

Observe that the continuity of the binary operation in $(T,\theta)$ implies that it is associative, as $Y$ is a dense subsemigroup of $T$.
Thus, $(T,\theta)$ is a Hausdorff zero-dimensional topological semigroup which contains $(Y,\tau)$ as a non-closed subsemigroup. But this contradicts the absolute $\mathsf{T_{\!z}S}$-closedness of $(Y,\tau)$ and $X$. The obtained contradiction completes the proof of the implication $(3)\Ra(2)$ and also the proof of Theorem~\ref{t:bounded}.
\end{proof}

\section{Some sufficient conditions of centroboundedness}\label{s:Acb}

In this section we shall find some sufficient conditions of centroboundedness, which will be combined with Theorem~\ref{t:bounded} in order to obtain the boundedness of certain sets in absolutely $\mathsf{T_{\!z}S}$-closed semigroups.

The following theorem is the main result of this section.

\begin{theorem}\label{t:=>Acb} Let $X$ be a semigroup, $A\subseteq  V\!E(X)$ be a  countable set of viable idempotents. Consider the prime coideal $C\defeq\bigcap_{e\in A}\frac{H_e}e$ in $X$, the group $H\defeq\prod_{e\in A}H_e$, and the homomorphism $h:C\to H$, $h:x\mapsto(xe)_{e\in A}$. The semigroup $X$ is $A$-centrobounded if one of the following conditions is satisfied:
\begin{enumerate}
\item $X$ is projectively $\mathsf{T_{\!z}S}$-closed and for every $e\in A$ the subsemigroup $Ce$ of $H_e$ is commutative;
\item the semigroup $h[C]$ is polyfinite;
\item $X$ is projectively $\mathsf{T_{\!z}S}$-closed and $h[C]$ is countable;
\item $X$ is absolutely $\mathsf{T_{\!z}S}$-closed, $|h[C]|\le\cov(\M)$, and for every $e\in A$ the subsemigroup $Ce\subseteq H_e$ is countable.
\item $X$ is absolutely $\mathsf{T_{\!z}S}$-closed, $|h[C]|\le\mathfrak c$ and for every $e\in A$ the subsemigroup $Ce$ of $H_e$ is bounded.
\end{enumerate}
\end{theorem}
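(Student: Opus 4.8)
The plan is to isolate the purely algebraic implication $(2)\Rightarrow$``$X$ is $A$-centrobounded'' and then reduce each of the remaining cases to it by showing that the semigroup $h[C]$ is polyfinite. For $(2)$ itself I fix the witnesses $d\in\IN$ and finite $F\subseteq h[C]$ of polyfiniteness. Let $e\in A$ and $g=(xe)(ye)^{-1}\in H_e\cap Z(X)$ with $x,y\in C$, and write $u=h(x),v=h(y)$, so $u_e=xe$, $v_e=ye$ and $g=u_ev_e^{-1}$. The decisive observation is that centrality of $g$ turns the identity $u_e=gv_e$ into $u_e^{\,j}=g^{\,j}v_e^{\,j}$ for every $j$, so each power $g^{\,j}=(x^je)(y^je)^{-1}$ is again a central fraction. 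Applying polyfiniteness to $u^j=h(x^j),v^j=h(y^j)$ gives a polynomial $f_j$ with $m_j\defeq\deg f_j\le d$ and $f_j(u^j),f_j(v^j)\in F$; projecting to the $e$-th coordinate and pulling the central element $g^{\,j}$ out of $f_j$ yields $f_j(u^j)_e=(g^{\,j})^{m_j}f_j(v^j)_e$, i.e. $g^{\,jm_j}\in S_e\defeq\{p_eq_e^{-1}:p,q\in F\}$. Since $|S_e|\le|F|^2$ independently of $e$, running $j$ over $\{1,\dots,|F|^2+1\}$ and invoking the pigeonhole principle produces $j\ne j'$ with $g^{\,jm_j}=g^{\,j'm_{j'}}$, so $g$ has order at most $N\defeq(|F|^2+1)d$ in the group $H_e$. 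Hence $n\defeq N!$ witnesses $A$-centroboundedness, and crucially $n$ depends only on $d$ and $|F|$, not on $e$.

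Next I would dispatch $(3)$ and $(1)$. In both, $C$ is a prime coideal, so by Lemma~\ref{l:pci} it inherits the relevant closedness from $X$, and its homomorphic image $h[C]$ is then $\mathsf{T_{\!z}S}$-closed, hence zero-closed by Lemma~\ref{l:c=>zc}. For $(3)$, a countable zero-closed semigroup is $\kappa$-polybounded with $\kappa<\max\{2,|h[C]|\}=\aleph_0$ by Theorem~\ref{t:polyb}(1), hence polybounded, hence polyfinite by Lemma~\ref{l:polyb=>polyf}, and case $(2)$ applies. For $(1)$, commutativity of each $Ce$ makes $h[C]\subseteq\prod_{e\in A}Ce$ a commutative semigroup; being $\mathsf{T_{\!z}S}$-closed it is periodic and group-bounded by Theorem~\ref{t:C-closed}. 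A periodic commutative subsemigroup of the group $H=\prod_eH_e$ is a subgroup (its only idempotent is $(e)_{e\in A}$), so group-boundedness bounds the exponent of $h[C]$ by some $n$; then $x\mapsto x^n$ is a constant polynomial, witnessing that $h[C]$ is polyfinite, and $(2)$ applies again.

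For $(4)$ and $(5)$ I keep the same target, polyfiniteness of $h[C]$, but reach it topologically. In case $(4)$, since $A$ is countable and each $Ce$ is a countable subsemigroup of the group $H_e$, left and right translations are injective, so $\prod_{e\in A}Ce$ carries the subinvariant separable metric $d\big((a_e)_e,(b_e)_e\big)=\sum_{e\in A}2^{-n(e)}\delta(a_e,b_e)$ (with $\delta$ the $\{0,1\}$-metric and $n\colon A\to\IN$ a bijection), making it a zero-dimensional metric semigroup whose completion lies in $\mathsf{T_{\!z}S}$; absolute $\mathsf{T_{\!z}S}$-closedness forces $h[C]$ to be closed, hence complete, inside it. With $|h[C]|\le\cov(\M)$ the covering argument underlying Theorem~\ref{t:polyb}(2) then shows $h[C]$ polybounded, hence polyfinite. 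In case $(5)$, each bounded $Ce$ is a group of finite exponent, which one equips with a compact Hausdorff group topology; assembling these over the countable index set $A$ produces a compact Hausdorff semigroup topology on a semigroup containing $h[C]$, and with $|h[C]|\le\mathfrak c$ the measure/category covering argument behind Theorem~\ref{t:polyb}(3,4) delivers polyfiniteness of $h[C]$. In all four cases the conclusion follows from $(2)$.

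The main obstacle is precisely cases $(4)$ and $(5)$: constructing the auxiliary Polish (resp. compact) semigroup topology on (a semigroup containing) $h[C]$ out of the coordinatewise data while keeping it zero-dimensional, and then running the covering arguments that convert the cardinal bounds $|h[C]|\le\cov(\M)$ and $|h[C]|\le\mathfrak c$ into polyboundedness and polyfiniteness. This requires the sharper cardinality-indexed forms of the results of \cite{ZeroClosed} encoded in Theorem~\ref{t:polyb}, and the delicate point is the assembly of the coordinate topologies into a genuine semigroup topology. By contrast, the reductions in $(1)$--$(3)$ and the pigeonhole computation proving $(2)$ are routine once the centrality identity $u_e^{\,j}=g^{\,j}v_e^{\,j}$ is observed.
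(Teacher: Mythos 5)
Your reduction of everything to the polyfiniteness case (2) works for (1)--(4) and matches the paper's route for (3) and (4); the paper proves (1) directly (bounded commutative group $h[C]$, project the $n$-th power to the $e$-th coordinate) rather than via a constant polynomial, but your reduction is equivalent. Two repairs are needed along the way. In (2), your pigeonhole only produces $j\ne j'$ with $g^{jm_j}=g^{j'm_{j'}}$, which is vacuous when $jm_j=j'm_{j'}$ --- entirely possible since the degrees $m_j$ vary with $j$; you must pigeonhole simultaneously on the pair of values in $\pr_e[F]$ \emph{and} on the degree, as the paper does with its triple $(a,b,d)$ and the range $i<j\le 1+n|\pr_e[F]|^2$, so that the two exponents are $id$ and $jd$ with the same $d$ and hence genuinely distinct. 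In (4), Theorem~\ref{t:polyb}(2) as stated requires $\cov(\M)=\mathfrak c$, not $|h[C]|\le\cov(\M)$; the paper closes this by noting that $h[C]$, being closed in the Polish semigroup $\prod_{e\in A}Ce$, is Polish, hence either countable (use part (1) of Theorem~\ref{t:polyb}) or of cardinality exactly $\mathfrak c$, in which case $|h[C]|\le\cov(\M)\le\mathfrak c$ forces $\cov(\M)=\mathfrak c$ and part (2) applies verbatim. Your appeal to ``the covering argument underlying'' that theorem skips this dichotomy, which is the actual mechanism.

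The genuine gap is case (5). A group of finite exponent need not admit any compact Hausdorff group topology (an infinite compact group has cardinality at least $\mathfrak c$, and even bounded abelian groups admit compact topologies only for special isomorphism types), so the compactification of each $Ce$ you propose is in general impossible; and even if $h[C]$ were compact, Theorem~\ref{t:polyb}(3,4) requires $\cov(\M)=\mathfrak c$ or $\cov(\overline{\mathcal N})=\mathfrak c$, neither of which is among the hypotheses of (5). The paper's proof of (5) does not go through polyfiniteness at all. It assumes $A$-centroboundedness fails, extracts witnesses $z_n\in Z(X)\cap H_{e_n}$ with $z_n^i\ne e_n$ for $1\le i\le n\mu_n$, where $\mu_n=\lcm\{\exp(Ce_k):k<n\}$, and uses only Theorem~\ref{t:polyb}(1): the image $h'[C]\subseteq\prod_{n}H_{e_n}$ is $\kappa$-polybounded for some $\kappa<\mathfrak c$. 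Boundedness of the $Ce_k$ enters twice: it makes $h'[C]$ a closed subgroup of the product, and it forces the elements $x_n^\pm=(h'(x_n^+)h'(x_n^-)^{-1})^{\mu_n}$ to converge to the identity. A Cantor scheme built from the $x_n^\pm$ then produces $\mathfrak c$ many limit points; since $\kappa<\mathfrak c$, uncountably many lie in a single polynomial fibre $f_\alpha^{-1}(b_\alpha)$, two of them agreeing up to level $\deg(f_\alpha)$, and comparing these at the first coordinate where they differ yields $z_m^{\mu_m\deg(f_\alpha)}=e_m$, contradicting the choice of $z_m$. This branching argument is the idea your outline is missing.
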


\begin{proof}  By our assumption, the set $A$ is countable and hence admits an injective function $\lambda:A\to\w$. Consider the group $H\defeq \prod_{e\in A}H_e$ endowed with the Tychonoff product topology of discrete topological groups $H_e$, $e\in A$. This topology is generated by the complete invariant metric $\rho:H\times H\to\IR$ defined by
$$\rho((x_a)_{a\in A},(y_a)_{a\in A})=\max(\{0\}\cup\{\tfrac1{2^{\lambda(a)}}:a\in A,\;x_a\ne y_a\}).$$
For every $e\in A$, consider the homomorphism $$\textstyle h_e:C\to H_e,\quad h:x\mapsto xe=ex.$$The homomorphisms $h_e$ compose the homomorphism $$\textstyle h:C\to H,\quad h:x\mapsto (h_e(x))_{e\in A}=(xe)_{e\in A}.$$ For every $a\in A$, let $\pr_a:H\to H_a$, $\pr_a:(x_e)_{e\in A}\mapsto x_a$, be the $a$th coordinate projection. The definition of the homomorphism $h$ implies that $\pr_a\circ h=h_a$ for every $a\in A$.
\smallskip

1. Assume that $X$ is projectively $\mathsf{T_{\!z}S}$-closed and for every $e\in A$ the subsemigroup $Ce$ of $H_e$ is commutative. Then the semigroup $h[C]\subseteq\prod_{e\in A}Ce$ is commutative. By Lemma~\ref{l:pci}, the prime coideal $C$ of  $X$ is projectively $\mathsf{T_{\!z}S}$-closed and so is its homomorphic image $h[C]$. By Theorem~\ref{t:Z}(1), the $\mathsf{T_{\!z}S}$-closed commutative semigroup $h[C]\subseteq H$ is periodic and hence is a subgroup of the group $H$. By Theorem~\ref{t:C-closed}, the $\mathsf{T_{\!z}S}$-closed commutative group $h[C]$ is bounded. Then there exists $n\in\IN$ such that $z^n\in E(H)$ for every $z\in h[C]$. Consequently, for every $x,y\in C$ and $e\in A$ we have that $((xe)(ye)^{-1})^n=\pr_e((h(x)h(y)^{-1})^n)=e$, witnessing that $X$ is $A$-centrobounded.
\smallskip

2. Assume that the semigroup $h[C]$ is polyfinite. Then there exist $n\in\IN$ and a finite set $F\subseteq h[C]$ such that for any $x,y\in h[C]$ there exists a semigroup polynomial $f:h[C]\to h[C]$ of degree $\le n$ such that $\{f(x),f(y)\}\subseteq F$. To show that $X$ is $A$-centrobounded, it suffices to check that for any $e\in A$ and $x,y\in C$ with $z\defeq (xe)(ye)^{-1}\in Z(X)$ we have $z^{m}=e$ where $m=(|F|n)^2!$.

For every $k\in\IN$ the assumption $z=(xe)(ye)^{-1}\in H_e\cap Z(X)$ implies $xe=zye$ and hence $(xe)^k=(zye)^k=z^k(ye)^k$. By the choice of $n$ and $F$, there exists a semigroup polynomial $f_k:h[C]\to h[C]$ of degree $\le n$ such that $\{f_k(h(x^k)),f_k(h(y^k))\}\subseteq F$. Find elements $a_0,\dots,a_{\deg(f_k)}\in h[C]$ such that $f_k(v)=a_0va_1v\cdots va_{\deg(f_k)}$ for all $v\in h[C]$. For every $i\in\{0,\dots,\deg(f_k)\}$, consider the element $\check a_i=\pr_e(a_i)$ of the group $H_e$. Let $\check f_k:H_e\to H_e$ be the semigroup polynomial defined by $\check f_k(v)=\check a_0v\check a_1v\dots v\check a_{\deg(f_k)}$ for $v\in H_e$. It is easy to see that $\pr_e\circ f_k=(\check f_k\circ \pr_e){\restriction}_{h[C]}$ and hence $\pr_e\circ f_k\circ h=\check f_k\circ\pr_e\circ h=\check f_k\circ h_e$. Then
$$\{\pr_e(f_k(h(x^k))),\pr_e(f_k(h(y^k)))\}=\{\check f_k(h_e(x^k)),\check f_k(h_e(y^k))\}=\{\check f_k(x^ke),\check f_k(y^ke)\}$$ and hence $\{\check f_k(x^ke),\check f_k(y^ke)\}\subseteq\pr_e[F]$. By the Pigeonhole Principle, there exists a triple $(a,b,d)\in \pr_e[F]\times \pr_e[F]\times\{1,\dots,n\}$ and two positive numbers $i<j\le 1+n|\pr_e[F]|^2$ such that $$(\check f_i(x^ie),\check f_i(y^ie),\deg(\check f_i))=(a,b,d)=(\check f_j(x^je),\check f_j(y^je),\deg(\check f_j)).$$
It follows from $x^ie=z^iy^ie$ and $z\in Z(X)$ that
$$a=\check f_i(x^ie)=\check f_{i}(z^iy^ie)=z^{i\deg(\check f_i)}\check f_i(y^ie)=z^{id}b.$$
By analogy we can prove that $a=z^{jd}b$. Since $H_e$ is a group, the equality $z^{id}b=z^{jd}b$ implies $z^{id}=z^{jd}$ and $z^{(j-i)d}=e$. Since $(j-i)d\le |F|^2 n^2$ divides $m=(|F|n)^2!$, $z^m=e$.
\smallskip

3. Assume that the semigroup $X$ is projectively $\mathsf{T_{\!z}S}$-closed and the set $h[C]$ is countable. By Lemma~\ref{l:pci}, the prime coideal  $C$ in $X$ is projectively $\mathsf{T_{\!z}S}$-closed and so is its homomorphic image $h[C]$. Being $\mathsf{T_{\!z}S}$-closed, the semigroup $h[C]$ is zero-closed, see Lemma~\ref{l:c=>zc}. By Theorem~\ref{t:polyb}(1), the zero-closed countable semigroup $h[C]$ is polybounded and by Lemma~\ref{l:polyb=>polyf}, $h[C]$ is polyfinite. By the preceding statement, the semigroup $X$ is $A$-centrobounded.
\smallskip

4. Assume that the semigroup $X$ is absolutely $\mathsf{T_{\!z}S}$-closed, $|h[C]|\le\cov(\M)$, and for every $e\in A$ the subsemigroup $Ce\subseteq H_e$ is countable. Since $h[C]\subseteq \prod_{e\in A}Ce\subseteq H$, the subsemigroup $h[C]$ of the metric group $(H,\rho)$ is separable. By Lemma~\ref{l:pci}, the prime coideal $C$ is  absolutely $\mathsf{T_{\!z}S}$-closed and so is its homomorphic image $h[C]$. By the absolute $\mathsf{T_{\!z}S}$-closedness of $h[C]$, the semigroup $h[C]$ is zero-closed and also $h[C]$ is closed in the zero-dimensional topological group $H$. Then the metric $\rho{\restriction}_{h[C]\times h[C]}$ is complete and hence $h[C]$ is a Polish space.

We claim that $h[C]$ is polybounded. If $|h[C]|<\mathfrak c$, then the Polish space  $h[C]$ is countable, see \cite[6.5]{Ke}.
By Theorem~\ref{t:polyb}(1), the countable zero-closed semigroup $h[C]$ is polybounded. If $|h[C]|=\mathfrak c$, then the inequality $|h[C]|\le\cov(\M)$ implies $\cov(\M)=\mathfrak c$ and then $h[C]$ is polybounded by Theorem~\ref{t:polyb}(2). So, in both cases, the semigroup $h[C]$ is polybounded. By Lemma~\ref{l:polyb=>polyf}, $h[C]$ is polyfinite. By the second statement of this theorem, the semigroup $X$ is $A$-centrobounded.
\smallskip

5. Assume that $|h[C]|\le\mathfrak c$ and for every $e\in A$ the semigroup $Ce\subseteq H_e$ of $X$ is bounded. For a bounded  subset $B\subseteq X$, let
$$\exp(B)\defeq\min\{n\in\IN:B\subseteq\korin{n}{\!E(X)}\}.$$ For a finite subset $F\subseteq \IN$ let
$$\lcm(F)\defeq\min\{n\in\IN:\forall x\in F\;\exists k\in\IN\;\;(n=xk)\}$$be the {\em least common multiple} of numbers in the set $F$.

To derive a contradiction, assume that the semigroup $X$ is not $A$-centrobounded. Writing down the negation of the $A$-centroboundedness, we obtain sequences $(x^+_n)_{n\in\w}$, $(x^-_n)_{n\in\w}$, $(z_n)_{n\in\w}$ and $(e_n)_{n\in\w}$ such that for every $n\in\w$ the following conditions are satisfied:
\begin{enumerate}
\item[(i)] $x^+_n,x^-_n\in C$, $e_n=Z(X)\cap A$, and $z_n=(x^+_ne_n)(x^-_ne_n)^{-1}\in Z(X)\cap H_{e_n}$;
\item[(ii)] $z_n^i\ne e_n$ for every $1\le i\le n\mu_n$ where $\mu_n\defeq\lcm\{\exp(Ce_k):k<n\}$.
\end{enumerate}
Consider the group $H'=\prod_{n\in\w}H_{e_n}$, and the homomorphism $h':C\to H'$, $h':x\mapsto (xe_n)_{n\in\w}$. Observe that $h'=\pr'\circ h$, where  $\pr':H\to H'$, $\pr':(x_a)_{a\in A}\mapsto (x_{e_n})_{n\in\w}$, is the projection. For every $n\in\w$, let $\pr'_n:H'\to H_{e_n}$, $\pr'_n:(x_k)_{k\in\w}\mapsto x_n$, be the $n$-th coordinate projection. Endow the group $H'$ with the complete invariant metric
$$\rho':H'\times H'\to\IR,\quad \rho'((x_n)_{n\in\w},(y_n)_{n\in\w})\mapsto(\{0\}\cup \{\tfrac1{2^n}:n\in\w,\;x_n\ne y_n\}).$$

By Lemma~\ref{l:pci}, the prime coideal $C$ of the absolutely $\mathsf{T_{\!z}S}$-closed semigroup $X$ is absolutely $\mathsf{T_{\!z}S}$-closed and so is its homomorphic image $h'[C]\subseteq H'$. Since the Tychonoff product topology on the group $H'=\prod_{n\in\w}H_{e_n}$ is zero-dimensional, the absolutely $\mathsf{T_{\!z}S}$-closed subsemigroup $h'[C]$ of $H'$ is closed in $H'$. Being absolutely $\mathsf{T_{\!z}S}$-closed, the semigroup $h'[C]$ is zero-closed by Lemma~\ref{l:c=>zc}. By Theorem~\ref{t:polyb}(1), the semigroup $h'[C]$ is $\kappa$-polybounded for some $\kappa<|h'[C]|=|\pr'[h[C]|\le|h[C]|\le\mathfrak c$. Then $h'[C]=\bigcup_{\alpha\in\kappa}f_\alpha^{-1}(b_\alpha)$ for some elements $b_\alpha\in h'[C]$ and some semigroup polynomials $f_\alpha:h'[C]\to h'[C]$.

Recall that $\mu_n$ is the least common multiple of the numbers $\exp(C{e_k})$, $k<n$, and observe that for every $x\in h'[C]\subseteq \prod_{n\in\w}Ce_n$, its inverse $x^{-1}$ in $H'$ is the limit of the sequence $(x^{1+\mu_n})_{n\in\w}$, which implies that $x^{-1}\in\overline{h'[C]}=h'[C]$ and means that $h'[C]$ is a subgroup of $H'$.

For every $n\in\w$ consider the element $$x^\pm_n=(h'(x^+_n)h'(x^-_n)^{-1})^{\mu_n}\in h'[C]\subseteq H'.$$

Observe that for every $k<n$ we have
\begin{multline*}
\pr'_k(x^\pm_n)=\pr'_k\big((h'(x^+_n)h'(x^-_n)^{-1})^{\mu_n}\big)=((x^+_ne_k)(x^-_ne_k)^{-1})^{\mu_n}=\\ \big(((x_n^+e_k)(x^-_ne_k)^{-1})^{\exp(C{e_k})}\big)^{\mu_n/\exp(C{e_k})}=
e_k^{\mu_n/\exp(C{e_k})}=e_k,
\end{multline*}
which means that the sequence $(x^\pm_n)_{n\in\w}$ converges to the identity element $e'=(e_k)_{k\in\w}$ of the topological group $H'=\prod_{k\in\w}H_{e_k}$.

Let $2=\{0,1\}$ and $2^{<\w}\defeq\bigcup_{n\in\w}2^n$. Define the family $(p_s)_{s\in 2^{<\w}}$ of elements of $H'$ by the recursive formula:
$$\mbox{$p_{\emptyset}=e'$ \ and \ $p_{s\hat{\;}0}=p_s$, $p_{s\hat{\;}1}=p_s\cdot x^\pm_n$ for every $n\in\w$ and $s\in 2^n$.}$$

The definition of the ultrametric $\rho'$ on $H'$ and the convergence of $x^\pm_n\to e'$ imply that for every $s\in 2^\omega$ the sequence $(p_{s{\restriction}_n})_{n\in\w}$ is Cauchy in the metric space $(H',\rho')$ and hence it converges to some element $p_s\in h'[C]\subseteq H'$. Since $\{p_s:s\in 2^\w\}\subseteq h'[C]\subseteq\bigcup_{\alpha\in\kappa}f_\alpha^{-1}(b_\alpha)$ and $\kappa<\mathfrak c$, there exists $\alpha\in\kappa$ such that the set $2^\w_\alpha\defeq\{s\in 2^\w:p_s\in f_\alpha^{-1}(b_\alpha)\}$ is uncountable and hence contains two distinct sequences $s,s'\in 2^\w_\alpha$ such that $s{\restriction}_{\deg(f_\alpha)}=s'{\restriction}_{\deg(f_\alpha)}$.
Let $m\in\IN$ be the smallest number such that $s(m)\ne s'(m)$. Then $m\ge\deg(f_\alpha)\ge 1$ and $s{\restriction}_m=s'{\restriction}_m$ by the minimality of $m$. Let $t=s{\restriction}_m=s'{\restriction}_m$ and observe that $\{p_{s{\restriction}(m+1)},p_{s'{\restriction}(m+1)}\}=\{p_{t\hat{\;}0},p_{t\hat{\;}1}\}$. We lose no generality assuming that $p_{s{\restriction}(m+1)}=p_{t\hat{\;}0}=p_{t}$ and
$p_{s'{\restriction}(m+1)}=p_{t\hat{\;}1}=p_tx_m^\pm$.
It follows from $f_\alpha(p_s)=b_\alpha=f_\alpha(p_{s'})$ that
$\pr'_m(f_\alpha(p_s))=\pr'_m(f_\alpha(p_{s'}))$.
Find elements $a_0,a_1,\dots, a_{\deg(f_\alpha)}\in h'[C]$ such that $f_\alpha(x)=a_0xa_1x\cdots xa_{\deg(f_\alpha)}$ for all $x\in h'[C]$.

For every $i\in\{0,\dots,\deg(f_\alpha)\}$ let $\check a_i=\pr'_m(a_i)$. Let $\check f_\alpha:H_{e_m}\to H_{e_m}$ be the semigroup polynomial defined by
$\check f_\alpha(x)=\check a_0x\check a_1x\cdots x\check a_{\deg(f_\alpha)}$ for $x\in H_{e_m}$. It is clear that $\pr'_m\circ f_\alpha=\check f_\alpha\circ\pr'_m$.

It follows from $\pr'_m(x_m^\pm)=z_m^{\mu_m}\in Z(X)$ and $\pr'_m(x_k^\pm)=e_m$ for all $m<k$ that
\begin{multline*}
\check f_\alpha(\pr'_m(p_t))=\check f_\alpha(\pr'_m(p_s))=\pr'_m(f_\alpha(p_s))=\pr'_m(f_{\alpha}(p_{s'}))=\check f_\alpha(\pr'_m(p_{s'}))=\check f_\alpha(\pr'_m(p_{t\hat{\;}1}))=\\
\check f_\alpha(\pr'_m(p_t)\pr'_m(x_m^\pm))=\check f_\alpha(\pr'_m(p_t)z_m^{\mu_m})=\check f_\alpha(\pr'_m(p_t))z_m^{\mu_m\deg{f_\alpha}}
\end{multline*}
and hence
$e_m=z_m^{\mu_m\deg{f_\alpha}}$, which contradicts the choice of $z_m$.
\end{proof}

\begin{corollary}\label{c:subsem} Let $X$ be an absolutely $\mathsf{T_{\!z}S}$-closed semigroup and $A\subseteq V\!E(X)$. Assume that for any infinite countable antichain $B\subseteq A$, the coideal $C\defeq\bigcap_{e\in B}\frac{H_e}e$ and the homomorphism $h:C\to H\defeq\prod_{b\in B}H_b$, $h:x\mapsto (xe)_{e\in B} $, one of the following conditions is satisfied:
\begin{enumerate}
\item for every $e\in B$ the subsemigroup $Ce$ of $H_e$ is commutative;
\item the semigroup $h[C]$ is polyfinite;
\item $h[C]$ is countable;
\item $|h[C]|\le\cov(\M)$, and for every $e\in A$ the subsemigroup $Ce\subseteq H_e$ is countable.
\item $|h[C]|\le\mathfrak c$ and for every $e\in A$ the subsemigroup $Ce$ of $H_e$ is bounded.
\end{enumerate}
Then the set $Z(X)\cap\korin{\IN}{\!A}$ is bounded, and every subsemigroup of $S\subseteq Z(X)\cap\korin{\IN}{\!A}$ of $X$ is  absolutely $\mathsf{T_{\!2}S}$-closed.
\end{corollary}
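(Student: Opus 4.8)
The plan is to combine Theorem~\ref{t:=>Acb} (to produce centroboundedness over antichains) with Theorem~\ref{t:bounded} (to turn centroboundedness into boundedness), and then to apply Lemma~\ref{l:chain-closed2} to the subsemigroups $S$. First I would establish the boundedness of $Z(X)\cap\korin{\IN}{\!A}$. Fix an arbitrary infinite countable antichain $B\subseteq A$ and form $C=\bigcap_{e\in B}\frac{H_e}e$, $H=\prod_{b\in B}H_b$ and $h:C\to H$, $h:x\mapsto(xe)_{e\in B}$. Since $X$ is absolutely $\mathsf{T_{\!z}S}$-closed, it is also projectively $\mathsf{T_{\!z}S}$-closed, so $X$ fulfils the prerequisites appearing in all five cases of Theorem~\ref{t:=>Acb}. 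Hence whichever of the assumptions (1)--(5) of the corollary is granted, the corresponding case of Theorem~\ref{t:=>Acb} applies (with $B$ playing the role of the countable set $A$ there) and shows that $X$ is $B$-centrobounded. As $B$ ranges over all countable infinite antichains of $A$, condition~(3) of Theorem~\ref{t:bounded} is verified, and the implication $(3)\Ra(1)$ of that theorem gives that $Z(X)\cap\korin{\IN}{\!A}$ is bounded.

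Next I would show that an arbitrary subsemigroup $S\subseteq Z(X)\cap\korin{\IN}{\!A}$ satisfies the five hypotheses of Lemma~\ref{l:chain-closed2}. Being contained in the center, $S$ is commutative, so it is $E$-commutative. The boundedness just proved supplies an $n\in\IN$ with $x^n\in E(X)$ for every $x\in Z(X)\cap\korin{\IN}{\!A}$; for $x\in S$ we then have $x^n\in S$ and thus $x^n\in E(S)$, so $S$ is bounded. Because $X$ is $\mathsf{T_{\!z}S}$-closed, Theorem~\ref{t:Z}(1) makes $Z(X)$ chain-finite, and because $X$ is injectively $\mathsf{T_{\!z}S}$-closed, Theorem~\ref{t:Z}(2) makes $Z(X)$ group-finite; both properties are inherited by the subsemigroup $S\subseteq Z(X)$.

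The one step that requires genuine work, and which I expect to be the main obstacle, is Clifford+finiteness. Here I would invoke Theorem~\ref{t:ideal}, which applies since $X$ is ideally $\mathsf{T_{\!z}S}$-closed, to conclude that $Z(X)\cap\korin{\IN}{V\!E(X)}\setminus H(X)$ is finite; as $A\subseteq V\!E(X)$, the set $S\setminus H(X)\subseteq Z(X)\cap\korin{\IN}{V\!E(X)}\setminus H(X)$ is finite as well. The delicate point is to replace $H(X)$ by the Clifford part $H(S)$ of $S$, i.e. to prove $H(S)=S\cap H(X)$. The inclusion $H(S)\subseteq S\cap H(X)$ is immediate, since every subgroup of $S$ lies in a maximal subgroup of $X$. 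For the reverse inclusion, take $x\in S\cap H(X)$ and choose $e\in E(X)$ with $x\in H_e$; then $x^n\in E(X)\cap H_e=\{e\}$, so $e=x^n\in S$, and from $xe=x$ and $x^{n+1}=x$ it follows that $\{x,x^2,\dots,x^n=e\}$ is a finite cyclic subgroup of $S$ containing $x$, whence $x\in H(S)$. Thus $S\setminus H(S)=S\setminus H(X)$ is finite and $S$ is Clifford+finite.

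Having verified that $S$ is chain-finite, group-finite, bounded, Clifford+finite and $E$-commutative, I would finish by applying Lemma~\ref{l:chain-closed2} to conclude that $S$ is absolutely $\mathsf{T_{\!2}S}$-closed, which completes the proof of the corollary.
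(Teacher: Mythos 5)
Your proposal is correct and follows essentially the same route as the paper: Theorem~\ref{t:=>Acb} plus Theorem~\ref{t:bounded}(3)$\Rightarrow$(1) for the boundedness, then Theorem~\ref{t:Z} for chain- and group-finiteness, Theorem~\ref{t:ideal} together with $H(S)=S\cap H(X)$ (which the paper deduces from periodicity, and you verify in more detail) for Clifford+finiteness, and finally Lemma~\ref{l:chain-closed2}.
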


\begin{proof} By Theorems~\ref{t:bounded} and \ref{t:=>Acb}, the set $Z(X)\cap\korin{\IN}{\!A}$ is bounded. Now let $S\subseteq Z(X)\cap \korin{\infty}{\!A}$ be any subsemigroup of $X$.  By Theorem~\ref{t:Z}, the semigroup $S\subseteq Z(X)$ is chain-finite, group-finite, periodic and nonsingular. The periodicity of $S$ implies that $H(S)=S\cap H(X)$ and hence $S\setminus H(S)\subseteq Z(X)\cap\korin{\IN}{\!A}\setminus H(X)$. By Theorem~\ref{t:ideal}, the set $Z(X)\cap\korin{\infty}{\!A}\setminus H(X)\supseteq S\setminus H(S)$ is finite, which implies that the semigroup $S$ is Clifford+finite. By Lemma~\ref{l:chain-closed2}, the chain-finite group-finite bounded Clifford+finite commutative semigroup $S$ is absolutely $\mathsf{T_{\!2}S}$-closed.
\end{proof}

\begin{corollary}\label{c:IZ} If a semigroup $X$ is absolutely $\mathsf{T_{\!z}S}$-closed, then its ideal center $I\!Z(X)$ is bounded and absolutely $\mathsf{T_{\!2}S}$-closed.
\end{corollary}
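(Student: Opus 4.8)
The plan is to derive Corollary~\ref{c:IZ} as a direct application of Corollary~\ref{c:subsem} to the set $A\defeq E(I\!Z(X))$ of idempotents of the ideal center. First I would record two preliminary facts. Since $I\!Z(X)\subseteq Z(X)$, and for $z_1,z_2\in I\!Z(X)$ we have $z_1z_2\in Z(X)$ together with $(z_1z_2)X=z_1(z_2X)\subseteq z_1Z(X)\subseteq Z(X)$, the ideal center $I\!Z(X)$ is a commutative subsemigroup of $X$. By Lemma~\ref{l:viable}, $A=E(I\!Z(X))=E(Z)\cap I\!Z(X)\subseteq V\!E(X)$, so $A$ is a legitimate input for Corollary~\ref{c:subsem}.

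The heart of the argument is to verify hypothesis (1) of Corollary~\ref{c:subsem} for every infinite countable antichain $B\subseteq A$ and the associated coideal $C\defeq\bigcap_{e\in B}\frac{H_e}e$. Here the defining property of the ideal center does all the work: for $e\in B\subseteq I\!Z(X)$ we have $eX\subseteq Z(X)$, so for every $x\in C$ the element $xe=ex$ lies in $eX\subseteq Z(X)$, whence $Ce\subseteq Z(X)$. As $Z(X)$ is commutative, the subsemigroup $Ce$ of $H_e$ is commutative, which is precisely condition~(1). I expect this observation to be the only genuine step; it reduces the verification to the trivial commutativity of the center, so that none of the cardinal-invariant or polyboundedness alternatives (2)--(5) of Corollary~\ref{c:subsem} is needed.

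With the hypothesis in hand, Corollary~\ref{c:subsem} gives that $Z(X)\cap\korin{\IN}{\!A}$ is bounded and that every subsemigroup $S\subseteq Z(X)\cap\korin{\IN}{\!A}$ of $X$ is absolutely $\mathsf{T_{\!2}S}$-closed. It then remains to show $I\!Z(X)\subseteq Z(X)\cap\korin{\IN}{\!A}$. Since $X$ is absolutely $\mathsf{T_{\!z}S}$-closed it is $\mathsf{T_{\!z}S}$-closed, so Theorem~\ref{t:Z}(1) makes $Z(X)$ periodic; thus each $x\in I\!Z(X)\subseteq Z(X)$ satisfies $x^n\in E(X)$ for some $n\in\IN$, and since $I\!Z(X)$ is a subsemigroup, $x^n\in E(I\!Z(X))=A$, i.e.\ $x\in\korin{n}{\!A}$. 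Hence $I\!Z(X)\subseteq Z(X)\cap\korin{\IN}{\!A}$. Being a subset of a bounded set, $I\!Z(X)$ is bounded; being a subsemigroup of $X$ contained in $Z(X)\cap\korin{\IN}{\!A}$, it is absolutely $\mathsf{T_{\!2}S}$-closed. This would complete the proof, and I anticipate no real obstacle beyond the clean centrality computation in the middle paragraph.
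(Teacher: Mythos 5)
Your proposal is correct and follows essentially the same route as the paper: both apply Corollary~\ref{c:subsem}(1) with $A=E(I\!Z(X))\subseteq V\!E(X)$ (via Lemma~\ref{l:viable}), verifying the commutativity hypothesis from the defining property $eX\subseteq Z(X)$ of the ideal center, and both use periodicity of $Z(X)$ to get $I\!Z(X)\subseteq Z(X)\cap\korin{\IN}{\!A}$. The only cosmetic difference is that you cite the second conclusion of Corollary~\ref{c:subsem} to finish, whereas the paper re-derives the Clifford+finite property via Theorem~\ref{t:ideal} and then invokes Lemma~\ref{l:chain-closed2} directly; both are valid.
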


\begin{proof} For every $e\in E(I\!Z(X))$ we have $H_e=H_ee\subseteq X\cdot I\!Z(X)\subseteq Z(X)$, which implies that the maximal subgroup $H_e$ is commutative.
By Theorem~\ref{t:Z}, the semigroup $Z(X)$ is chain-finite, group-finite, periodic, and nonsingular. By Lemma~\ref{l:viable}, $E(I\!Z(X))\subseteq V\!E(X)$ and by the periodicity of $I\!Z(X)$, we obtain $I\!Z(X)=I\!Z(X)\cap\korin{\IN}{E(I\!Z(X))}$. By Corollary~\ref{c:subsem}(1), the set $Z(X)\cap \korin{\IN}{E(I\!Z(X))}$ is bounded in $X$ and so is its subset $I\!Z(X)$.
By the periodicity, $H(I\!Z(X))=H(X)\cap I\!Z(X)$. By Theorem~\ref{t:ideal}, the set $$I\!Z(X)\setminus H(I\!Z(X))=I\!Z(X)\setminus H(X)= I\!Z(X)\cap\korin{\IN}{E(I\!Z(X))}\setminus H(X)\subseteq Z(X)\cap\korin{\IN}{V\!E(X)}\setminus H(X)$$ is finite, which means that the semigroup $I\!Z(X)$ is Clifford+finite.

Therefore, the commutative semigroup $I\!Z(X)$ is chain-finite, group-finite, bounded, and Clifford+finite. By Lemma~\ref{l:chain-closed2}, $I\!Z(X)$ is absolutely $\mathsf{T_{\!2}S}$-closed.
\end{proof}

\section{Proof of Theorem~\ref{t:ac=>ideal}}\label{s:ideal}

Let $\C$ be class of topological semigroups such that either $\C=\mathsf{T_{\!1}S}$ or $\mathsf{T_{\!2}S}\subseteq \C\subseteq \mathsf{T_{\!2}S}$.

Given an absolutely $\C$-closed semigroup, we should prove that the ideal center $I\!Z(X)$ of $X$ is absolutely $\C$-closed. By Lemma~\ref{l:viable}, $E(I\!Z(X))\subseteq V\!E(X)$. Since $\mathsf{T_{\!z}S}\subseteq\C$, the semigroup $X$ is absolutely $\mathsf{T_{\!z}S}$-closed. By Theorem~\ref{t:Z}, the semigroup $Z(X)$ is periodic and hence $$I\!Z(X)= I\!Z(X)\cap\korin{\IN}{E(I\!Z(X))}\subseteq Z(X)\cap\korin{\IN}{V\!E(X)}.$$

If $\C=\mathsf{T_{\!1}S}$, then by Theorem~\ref{t:mainT1}, the set $Z(X)\cap\korin{\IN}{V\!E(X)}$ is finite and so is its subset $I\!Z(X)$.

If $\mathsf{T_{\!z}S}\subseteq \C\subseteq \mathsf{T_{\!2}S}$, then the semigroup $I\!Z(X)$ is absolutely $\C$-closed by Corollary~\ref{c:IZ}.

\section{Proof of Theorem~\ref{t:main2}}\label{s:final}

Given a class $\C$ of topological semigroups with $\mathsf{T_{\!z}S}\subseteq\C\subseteq\mathsf{T_{\!2}S}$, and a commutative semigroup $X$, we shall prove the equivalence of the following conditions.
\begin{enumerate}
\item $X$ is absolutely $\C$-closed;
\item $X$ is ideally $\C$-closed, injectively $\C$-closed and bounded;
\item $X$ is ideally $\C$-closed, group-finite and bounded;
\item $X$ is chain-finite, bounded, group-finite and Clifford+finite.
\end{enumerate}

The implication $(1)\Ra(2)$ follows from  Corollary~\ref{c:IZ} and the equality $I\!Z(X)=Z(X)=X$ holding by the commutativity of $X$.

The implication $(2)\Ra(3)$, $(3)\Ra(4)$, and $(4)\Ra(1)$ follow from Theorems~\ref{t:Z}(2), \ref{t:mainP}, and Lemma~\ref{l:chain-closed2}, respectively.

\end{document}